%% file: fractionalArcsine-arXiv01.tex
\documentclass[a4paper,12pt]{article}
\input{style-common}

\input{style-e}

\input{style-layout-paper}

\usepackage[sort]{natbib}
\setcitestyle{authoryear,comma,open={(},close={)}} 

\usepackage{hyperref}
\usepackage{mathtools}
\mathtoolsset{showonlyrefs}

\DeclareMathOperator{\disc}{disc}

\begin{document}
\begin{center}
	{\Large \textbf{
  State-dependent inverse-subordinator time changes of regenerative processes: Excursion structure and multiscale occupation-time limits
  }
	}
\end{center}
\begin{center}
	Kosuke Yamato (The University of Osaka)
\end{center}

\begin{abstract}
We study regenerative processes time-changed by state-dependent inverse subordinators.
The construction assigns possibly different independent subordinators to measurable classes of excursions and builds a random clock from the corresponding occupation times.
Although the resulting process is generally not regenerative, we prove that its transformed excursion point process is again Poisson, using an excursion-wise marking-and-mapping procedure.

We apply this to study occupation-time asymptotics.
Under regular variation assumptions on the transformed excursion lifetime tails, we prove a multiscale joint occupation-time limit theorem, including generalized arcsine laws and Darling--Kac type limits.
\end{abstract}


\section{Introduction}

Time changes by inverse subordinators have been studied extensively as
probabilistic representations of time-fractional evolution equations.
A classical example is the time-fractional Cauchy problem
\[
\partial_{t}^{\alpha} u = Lu \quad (0<\alpha<1),
\]
where \(\partial_{t}^{\alpha}\) is the Caputo derivative and \(L\) is the generator of a Markov process \((X,\bP_{x})\).
Its solution is represented, under suitable conditions, by
\[
u(t,x)=\bE_{x}[f(X_{E_{t}})],
\]
where \(E_{t} = \inf\{s > 0 \mid S_{s} > t\}\) is the right-continuous inverse of an \(\alpha\)-stable subordinator independent of \(X\); see, for example, \cite{BaeumerMeerschaert2001}.
More general subordinators lead to generalized time-fractional derivatives of
convolution type.
If \(\mu\) is the L\'evy measure of the subordinator \(S\) and
\(w(t):=\mu(t,\infty)\), then such a derivative is given by
\[
\partial_t^w f(t) = \frac{d}{dt} \int_0^t w(t-s)(f(s)-f(0)) ds.
\]
This leads to equations of the form
\[
(\kappa\partial_t+\partial_t^w)u=Lu,
\]
where \(\kappa\) is a drift parameter of \(S\).
See \cite{MeerschaertSikorskii2019,Chen2017,Toaldo2015} for details.

While the analytic equation associated with \(X_{E_t}\) has been extensively
studied, the pathwise structure of the time-changed process is more delicate.
The process \(X_{E_t}\) is typically not Markovian.
The point of departure of the present paper is that, when \(X\) is viewed through its excursions away from a distinguished state, the inverse-subordinator time change still has a tractable structure.

To explain the idea, we suppose that the subordinator has strictly increasing paths, so that its inverse is continuous.
This assumption is standard in the study of inverse subordinators, and we adopt it in the present paper as well.
Then each excursion interval of \(X\) is mapped into that of the time-changed process.
More precisely, an excursion of \(X\) is not split or merged by the time change; only its time parametrization is modified by the inverse of the
increment path of the subordinator over that interval.
Thus, the time change acts excursion by excursion.

This observation is naturally expressed in the language of Poisson point
processes.
The excursion point process of \(X\) is a Poisson point process, and the above excursion-wise transformation can be viewed as attaching to each excursion an independent mark, namely the corresponding increment path of the subordinator, followed by a measurable mapping of the marked excursion.
The marking-and-mapping theorem for Poisson point processes therefore provides the basic mechanism behind the transformed excursion structure.  One of the main structural results of this paper is to make this argument precise and identify the excursion point process of the time-changed process.

This viewpoint also suggests a natural generalization.  
Since the construction is excursion-wise, one may divide the excursion space into measurable classes and assign different subordinators to different classes.
The resulting clock is state-dependent in the sense that the time change
applied during an excursion depends on the class of that excursion.
The present paper develops this construction and identifies the transformed
excursion point process.

A closely related work is \cite{ChoiClancy}, which developed an excursion theory for reflected Brownian motion time-changed by an inverse subordinator.
They show, in particular, that the excursion process of the time-changed reflected Brownian motion is again Poisson, and they use this description to study Parisian options, Poisson--Dirichlet laws for excursion lengths in the stable case, and Ray--Knight type descriptions of occupation measures.  Their analysis is based on the special structure of Brownian excursions, in particular on L\'evy's construction of reflected Brownian motion and the properties of Brownian local time.

The present paper takes a different viewpoint.
Rather than relying on Brownian-specific constructions, we start from the excursion point process of a regenerative process.
The time change is then treated directly as an excursion-wise marking-and-mapping procedure.
This abstract formulation gives a direct proof of the Poisson structure of the transformed excursion process and applies beyond Brownian motion.
It also makes it possible to introduce excursion-dependent clocks, by assigning different subordinators to different measurable classes of excursions.

Once this structure is established, we apply it to analyze occupation times of excursions in each class.
Although the time-changed process is generally no longer Markovian, its occupation times can still be described through the cumulative lifetimes of transformed excursions.
Moreover, in each excursion class, the Laplace exponent of the transformed cumulative lifetime process is obtained by composing the original lifetime exponent with the Laplace exponent of the subordinator assigned to that class, that is, the cumulative lifetime process satisfies a subordination relation.

We derive joint limit theorems for the occupation times under regular variation assumptions on these lifetime tails.
The theorem gives a multiscale description of occupation times depending on the tail behavior of cumulative lifetimes.
The classes with the heaviest tails form the dominant part, and their mean occupation times converge jointly to the generalized arcsine laws.
We also describe residual and sub-dominant components, which lead to Darling--Kac type limits.
A key tool is Williams formula, which associate cumulative lifetimes with occupation times.

Our occupation time limit theorem is in the tradition of \cite{Lamperti1958}.
He considered stochastic processes whose state space is divided into two parts and whose transitions between the two parts occur through a distinguished point.
He characterized the possible limit distributions of the mean occupation time in one part and gave necessary and sufficient conditions for convergence.
The resulting class of limit distributions is now called Lamperti's generalized arcsine laws.

Lamperti-type occupation-time limit theorems have since been developed in several directions.
\cite{Watanabe1995} studied generalized arcsine laws for one-dimensional diffusions and characterized the convergence.
\cite{MR1022918} studied Bessel processes on multiray and obtained multidimensional and joint-law generalizations of generalized arcsine laws.
\cite{MR3647066} studied diffusions on multiray and derived joint laws and limit theorems for the occupation times on each ray.

Functional versions of such results have also been studied. 
In \cite{FujiharaKawamuraYano2007}, Lamperti's occupation-time limit theorem was extended to a functional convergence setting. 
\cite{Sera2020} established a functional limit theorem for occupation times which simultaneously gives Darling--Kac and Dynkin--Lamperti type limit theorems.

Our formulation and proof of the occupation-time limit theorem are strongly influenced by \cite{Sera2020}.
In the present setting, after the transformed excursion point process is identified, the occupation times are represented in terms of cumulative lifetimes of excursions.
Regular variation of the lifetime tails yields stable limits for the scaled cumulative lifetime processes, and Williams formulae allow us to pass to functional limits.

We summarize the main contributions of this paper as follows.
First, for regenerative processes time-changed by state-dependent inverse subordinators, we show that the transformed excursion point process is again a Poisson point process.
Second, under regular variation assumptions on the transformed lifetime tails, we prove a multiscale joint occupation-time limit theorem, including generalized arcsine laws and Darling--Kac type limits.

In Section \ref{section:example}, we illustrate the assumptions and conclusions of the occupation time limit theorem by two classes of examples:
continuous-time Markov chains on discrete rays and one-dimensional diffusions.
In the former case, the excursion lifetime tails reduce to hitting time tails on the individual rays; in the latter, they can be verified through
the tail behavior of the speed measure.

The paper is organized as follows.
In Section \ref{section:excursionTheoryBackground}, we recall the excursion theory for regenerative processes and introduce measurable partitions of the excursion space.
In Section \ref{section:timeChange}, we construct the excursion-class-dependent inverse-subordinator time change and prove the main structural result, Theorem \ref{Thm:transformed-excursion-PPP}, which identifies the transformed excursion point process as a Poisson point process.
Section \ref{section:cumulativeOccupationTimeBasics} establishes basic
relations for cumulative excursion lifetimes and occupation times.
In particular, we derive the subordination relation for the Laplace exponents of the transformed cumulative lifetime processes.
Section \ref{section:occupationTimeLimitTheorem} proves the main occupation-time limit theorem, Theorem \ref{thm:occupation_limit}, after preparing several auxiliary lemmas on \(J_1\)-convergence.
The theorem gives a multiscale joint limit for occupation times under regular variation assumptions on the transformed lifetime tails.
Section \ref{section:example} illustrates the theorem through continuous-time Markov chains on discrete rays and one-dimensional diffusions.
The proofs of auxiliary results are given in Appendix \ref{appendix:proofs}.

\subsubsection*{Acknowledgement}
The author was supported by JSPS KAKENHI grant no. 24K22834.

\section{Excursion theory for regenerative processes} \label{section:excursionTheoryBackground}

In this section, we recall the excursion-theoretic framework used throughout
the paper.
We first summarize the excursion theory for a regenerative process.
We then introduce measurable partitions of the excursion space, which will later be used to assign different random clocks to different classes of excursions.

\subsection{Regenerative processes and excursion point processes}

We recall the basic excursion-theoretic representation of regenerative
processes.
The material in this subsection is standard and is taken mainly
from \cite[Chapter 29]{Kallenberg-third}.

Let \((\Omega,\cA,\bP)\) be a probability space, let \(S\) be a Polish space with a distinguished state \(o\), and let \(D\) denote the space of \(S\)-valued c\`adl\`ag paths on \([0,\infty)\).
Consider an \(S\)-valued c\`adl\`ag process \(X=(X_t)_{t\ge0}\), adapted to a right-continuous complete filtration \(\cF=(\cF_t)_{t\ge0}\), and assume that \(\bP[X_0=o]=1\).
We write \(\theta\) for the shift operator on \(D\), that is,
\[
(\theta_t x)(s)=x(t+s)
\quad (x\in D,\ s,t\ge0).
\]

We assume that \(X\) is \textit{regenerative} at \(o\) in the following sense: there exists a probability measure \(P_o\) on \(D\) such that, for every \(\cF\)-optional time \(\tau\),
\[
\cL(\theta_{\tau}X \mid \cF_{\tau}) = P_o
\quad
\text{a.s. on } \{\tau<\infty,\ X_{\tau}=o\},
\]
where \(\cL(\theta_\tau X\mid \cF_\tau)\) denotes a regular conditional distribution of \(\theta_\tau X\) given \(\cF_\tau\).
Define
\[
\Xi:=\{t\ge0 \mid X_t=o\}.
\]
Throughout the paper, we assume that the following hold almost surely:
\begin{enumerate}
  \item The closure \(\overline{\Xi}\) is perfect, that is, \(\overline{\Xi}\) has no isolated points.
  \item The set \(\Xi\) is not locally finite, that is, for some \(n\in\bN\), the set \(\Xi\cap[0,n]\) is infinite.
  \item \(\Xi \neq [0,\infty)\).
\end{enumerate}
These assumptions merely exclude certain specific cases.
In fact, the possible forms of \(\Xi\) are highly restricted; see \cite[Theorem 29.7]{Kallenberg-third}.

\begin{Rem}
The assumptions are not very restrictive in the situations of interest.
For example, they are satisfied when \((X_t,\bP_{x})\) is a standard process in the sense of \cite{BluGet1968} and \(o\) is regular for itself, namely
\[
\bP_o(T_o=0)=1
\quad \text{for} \quad
T_o:=\inf\{t>0 \mid X_t=o\},
\]
and not absorbing.
\end{Rem}

By the right-continuity of the paths of \(X\), every point of \(\overline{\Xi}\setminus \Xi\) is isolated from the right.
Since \([0,\infty)\setminus \overline{\Xi}\) is an open subset of \((0,\infty)\), it is a disjoint union of at most countably many open intervals. It follows that
\[
[0,\infty)\setminus \Xi=\bigsqcup_{n=1}^\infty I_n,
\]
where each \(I_n\) is of the form \((a,b)\) or \([a,b)\), with \(0\le a<b\le\infty\).
We call such intervals the excursion intervals of \(X\) away from \(o\).

The space of excursions \(D_{0}\) is defined by
\[
D_0
:=
\{e\in D \mid \zeta(e) > 0 \text{ and } e_t=o\ \text{for all } t\ge \zeta(e)\},
\]
where the \textit{lifetime} \(\zeta(e)\) is defined by
\[
\zeta(e):=\inf\{t>0 \mid e_t=o\},
\]
with the convention \(\inf\emptyset=\infty\).

In this setting, the excursion decomposition of \(X\) away from \(o\) is described by a Poisson point process on \([0,\infty)\times D_0\).
The following is a fundamental result of the excursion theory for regenerative processes.

\begin{Thm}[{\citet[Theorem 29.11, 29.13]{Kallenberg-third}}] \label{thm:existenceExcursionMeasure}
There exist a measure \(n\) on \(D_0\), a Poisson point process \(N\) on \([0,\infty)\times D_0\) with intensity measure \(dt\otimes n\), a non-negative non-decreasing continuous adapted process \(L=(L_t)_{t\ge0}\), and a constant \(c\ge0\) such that the following hold:
\begin{enumerate}
\item For every \(h > 0\), we have
\[
n(\zeta > h) < \infty.
\]
Consequently, for every \(t > 0\) and \(h > 0\)
\[
N([0,t]\times\{\zeta>h\})<\infty
\quad\text{a.s.}
\]
\item
Define a right-continuous strictly increasing process \(\eta=(\eta_s)_{s\ge0}\) by
\[
\eta_s
=
cs+\int_{[0,s]\times D_0}\zeta(e) N(du\,de)
\quad (s\ge0).
\]
Then
\[
L_{\infty}=\inf\{s \geq 0 \mid \eta_{s} = \infty\}.
\]
\item
The behavior of \(X\) away from \(o\) is reconstructed from \(N\) and \(\eta\) as follows: for every \((s,e)\in \Supp N\) with \(s \leq L_\infty\),
\[
X_t=e_{t-\eta_{s-}}
\quad
\text{for } t\in[\eta_{s-},\eta_s),
\]
where \(\eta_{s-}:=\lim_{r\uparrow s}\eta_r\).
Equivalently,
\[
e_{u} = X_{\eta_{s-}+u}
\quad \text{for } u \in [0,\zeta(e)).
\]
\item
The Stieltjes measure \(dL\) is almost surely supported on \(\overline{\Xi}\).

\item
For every \(t\ge0\),
\[
\lambda(\Xi\cap[0,t])=cL_t
\quad \text{a.s.},
\]
where \(\lambda\) denotes the Lebesgue measure.
\end{enumerate}
\end{Thm}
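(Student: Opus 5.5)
The plan follows the classical Itô--Kallenberg construction: build the local time first, then the excursion point process, then read off the remaining properties from the regenerative structure. Since \(\overline{\Xi}\) is a.s. perfect and not locally finite, and \(\Xi\neq[0,\infty)\), I will use the characterization of regenerative sets (\cite[Theorem 29.7]{Kallenberg-third}). It says that \(\overline{\Xi}\) is the closed range of a subordinator. Concretely, I will construct a continuous nondecreasing adapted process \(L\) whose right-continuous inverse is a subordinator \(\eta\), with \(\overline{\Xi}=\overline{\{\eta_s\mid s\ge0\}}\), possibly killed at an independent exponential time.

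To construct \(L\), I will first try the route with a fixed \(h>0\). Let \(\kappa_h\) be the number of excursion intervals of length \(>h\) completed by time \(t\), and let \(L_t^{h}\) be \(\kappa_h(t)\) divided by its normalizing mean \(\nu(h)\). By regeneration at the successive right endpoints of long excursions, the long excursions form a renewal sequence. A martingale or \(L^2\) argument as \(h\downarrow0\) along a sequence then gives uniform a.s. convergence to a continuous limit \(L\). This limit is adapted, increases only on \(\overline{\Xi}\) (property 4), and its inverse has stationary independent increments by the strong regenerative property applied at the optional times \(\tau_s=\inf\{t\mid L_t>s\}\), where \(X_{\tau_s}=o\). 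I expect this step, constructing \(L\) with continuity and the additivity/regeneration identity, to be the main obstacle. The perfectness hypothesis is exactly what rules out a jump process and gives continuity.

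Next, define \(N=\sum \delta_{(L_{a},\,e^{(a)})}\), summing over excursion intervals \([a,b)\) or \((a,b)\), where \(e^{(a)}_u=X_{a+u}\) for \(u<b-a\) and \(e^{(a)}_u=o\) afterwards. This satisfies property 3 by construction. To show that \(N\) is Poisson with intensity \(dt\otimes n\), I will restrict to \(\{\zeta>h\}\). On that set the points are locally finite, and regeneration at the end of each long excursion makes the inter-arrival local times i.i.d. and independent of the excursions. The inter-arrival local times are exponential because \(L\) is additive, so the restriction is a marked Poisson process with intensity \(dt\otimes n_h\), where \(n_h\) is a finite measure. This also yields property 1. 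The measures \(n_h\) are consistent as \(h\downarrow0\), so they define \(n\), and independence across disjoint classes follows from the standard characterization of Poisson processes through independent thinnings.

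Finally, since \(\eta\) is a subordinator whose jumps are exactly the excursion lengths located at local times \(s\), the Lévy--Itô decomposition gives \(\eta_s=cs+\sum_{u\le s}\Delta\eta_u\). The jump sum equals \(\int\zeta\,dN\), which is property 2. Killing corresponds to the first point with \(\zeta=\infty\), and this yields \(L_\infty=\inf\{s\mid \eta_s=\infty\}\). For property 5, I will use \([0,\eta_s]\setminus\Xi=\bigcup(\text{excursion intervals})\), up to the countable set \(\overline\Xi\setminus\Xi\). This gives \(\lambda(\Xi\cap[0,\eta_s])=\eta_s-\sum_{u\le s}\Delta\eta_u=cs\), and composing with \(L\) gives \(\lambda(\Xi\cap[0,t])=cL_t\).
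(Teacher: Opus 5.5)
The paper does not prove this statement; it quotes it from Kallenberg (Theorems 29.11 and 29.13), so your sketch has to stand on its own. Suppose a continuous \(L\) is available whose Stieltjes measure has support exactly \(\overline{\Xi}\) and which is additive at optional times in \(\Xi\). Then your remaining steps do give (i)--(v): exponential local-time spacings of the excursions longer than \(h\) with independent i.i.d.\ marks, consistency of the \(n_h\), the L\'evy--It\^o decomposition of \(\eta\), and the computation of \(\lambda(\Xi\cap[0,\eta_s])\).

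The genuine gap is in the construction of \(L\): your claim that perfectness of \(\overline{\Xi}\) rules out a jump limit is false. The standing hypotheses allow \(o\) to be a holding state. Then \(\Xi\) is a locally finite union of intervals \([\sigma_k,\tau_k)\) with i.i.d.\ exponential lengths (Theorem 29.7(iii), as the paper uses it). Such a set has perfect closure, is infinite, and is not \([0,\infty)\). This is exactly the situation of the paper's Markov chain example on discrete rays, where \(q(o)<\infty\) and \(n\) has total mass \(q(o)\). Only finitely many excursions are completed by time \(t\), so \(\kappa_h(t)\uparrow\kappa_0(t)<\infty\) and any consistent normalization \(\nu(h)\) stays bounded as \(h\downarrow0\). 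Your limit is then a constant multiple of the counting process of completed excursions. It jumps at each excursion end, its inverse is a step function, and (ii)--(iii) fail. What makes your scheme produce a continuous limit is \(\nu(h)\to\infty\), i.e.\ \(n(D_0)=\infty\), not perfectness.

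The repair is a case split. In the holding case, put \(L_t:=\lambda(\Xi\cap[0,t])\), so \(c=1\). Let \(H\) be the first exit time from \(o\); regeneration at deterministic times \(s\) on \(\{H>s\}\) shows that \(H\) is \(\mathrm{Exp}(q)\) with \(0<q<\infty\) and is independent of the following excursion. Regeneration at excursion ends then makes the excursions, indexed by \(L\), a marked Poisson process with finite intensity \(n=q\,\cL(\text{first excursion})\).

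In the remaining case \(\nu(h)\to\infty\) and your counting construction is viable, but you must make the normalization explicit. ``Its normalizing mean'' means nothing before a local time exists. Fix \(h_0\) and let \(1/\nu(h)\) be the probability that an excursion longer than \(h\) is longer than \(h_0\). For \(h'<h\), let \(\zeta'_k\) be the length of the \(k\)-th excursion longer than \(h'\) and \(d'_k\) its end. Then the centred indicators \(1\{\zeta'_k>h\}-\nu(h)/\nu(h')\) are martingale differences for \((\cF_{d'_k})_k\), and \(\kappa_{h'}(t)+1\) is a stopping time for this filtration. This yields the \(L^2\) and maximal estimates for \(\kappa_h/\nu(h)-\kappa_{h'}/\nu(h')\).

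Finally, Theorem 29.7 is the classification of regenerative sets. It is not the statement that \(\overline{\Xi}\) is the closed range of a subordinator; that is essentially the content of the theorem you are proving, so it cannot serve as an input.
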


We call \(L\), \(\eta\), \(N\), \(n\), \(c\) the local time, the inverse local time, the excursion point process, the excursion measure, and the stagnancy rate at \(o\), respectively.

\subsection{Measurable partitions of the excursion space}

In the construction of the time-changed process, different random clocks will be assigned to different classes of excursions.
We therefore introduce a measurable partition of the excursion space.

Let \(I = \{1,2,\dots,N\}\) or \(\{1,2,\dots\}\).
When a family \((D^{(i)}_{0})_{i \in I}\) of measurable subsets of \(D_{0}\) satisfies
\begin{align}
n\left(D_{0}\setminus \bigcup_{i\in I} D_{0}^{(i)}\right)=0,
\quad
n\left(D_{0}^{(i)}\cap D_{0}^{(j)}\right)=0
\quad (i\neq j), \label{Eq03}
\end{align}
we say that \((D^{(i)}_{0})_{i \in I}\) is a \textit{measurable partition} of \(D_{0}\).
Thus, \(n\)-almost every excursion belongs to exactly one class \(D_{0}^{(i)}\). We interpret \(D_{0}^{(i)}\) as the class of excursions of type \(i\). 

In typical examples, this partition arises from a measurable partition of the state space,
\[
S=\bigsqcup_{i\in I} S_{i} \sqcup \{o\},
\]
with the property that \(n\)-almost every excursion remains entirely within a single component \(S_i\) during its lifetime. In such a case, we can take
\[
D_{0}^{(i)}
=
\{e\in D_{0} \mid e_{t}\in S_{i}\ \text{for all }0<t<\zeta(e)\},
\]
provided that this defines a measurable subset of \(D_{0}\).
We shall occasionally refer to this as the \textit{state-space partition setting}.

By the basic property of Poisson point processes, we obtain the following decomposition of the excursion point process.

\begin{Prop}
For each \(i\in I\), define the restricted point process
\[
N^{(i)} := N(\cdot\cap ([0,\infty)\times D_{0}^{(i)})),
\]
and let
\[
n^{(i)} := n(\cdot \cap D_{0}^{(i)}).
\]
Then \(\{N^{(i)}\}_{i\in I}\) is a family of independent Poisson point processes with intensity measures \(dt\otimes n^{(i)}\). Moreover,
\[
N=\sum_{i\in I} N^{(i)}
\]
as random measures, almost surely.
\end{Prop}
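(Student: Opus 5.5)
The plan is to reduce everything to the restriction theorem for Poisson point processes, applied to the measurable partition of Theorem~\ref{thm:existenceExcursionMeasure}'s excursion point process. The sets \(A_i:=[0,\infty)\times D_0^{(i)}\) are not disjoint, but by \eqref{Eq03} their pairwise intersections and the complement of their union are \(dt\otimes n\)-null. First, replace the partition by a genuinely disjoint one. Set
\[
\tilde D_0^{(i)}:=D_0^{(i)}\setminus\bigcup_{j<i}D_0^{(j)}, \qquad \tilde D_0^{(0)}:=D_0\setminus\bigcup_{i\in I}D_0^{(i)} .
\]
These are measurable and pairwise disjoint, they cover \(D_0\), and they satisfy \(n(D_0^{(i)}\triangle\tilde D_0^{(i)})=0\) and \(n(\tilde D_0^{(0)})=0\).

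Second, I would show that null sets carry no points. For a \(dt\otimes n\)-null measurable set \(B\subset[0,\infty)\times D_0\), the random variable \(N(B)\) is Poisson with mean \(0\), so \(N(B)=0\) almost surely. Since \(I\) is countable, almost surely simultaneously \(N([0,\infty)\times(D_0^{(i)}\triangle\tilde D_0^{(i)}))=0\) for all \(i\) and \(N([0,\infty)\times\tilde D_0^{(0)})=0\). On this event \(N^{(i)}=\tilde N^{(i)}:=N(\cdot\cap([0,\infty)\times\tilde D_0^{(i)}))\) as measures. Moreover, \(n^{(i)}=\tilde n^{(i)}\), where \(\tilde n^{(i)}:=n(\cdot\cap\tilde D_0^{(i)})\).

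Third, I would use the independence of restrictions to disjoint sets. For a Poisson random measure with \(\sigma\)-finite intensity \(\mu\) on a space \(E\) and disjoint measurable sets \(E_i\), the restrictions \(N|_{E_i}\) are independent Poisson random measures with intensities \(\mu|_{E_i}\). To verify this, take finitely many indices and, for each, finitely many disjoint measurable subsets of \(E_i\). Their counts are counts on pairwise disjoint sets of \(E\), hence mutually independent Poisson variables with the correct means. This determines the finite-dimensional laws, giving both the Poisson property of each restriction and the joint independence, which is tested on finite subfamilies. Here \(dt\otimes n\) is \(\sigma\)-finite because \(n(\zeta>h)<\infty\) for each \(h>0\) by Theorem~\ref{thm:existenceExcursionMeasure} and \(D_0=\bigcup_h\{\zeta>h\}\). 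Combined with the second step, this shows that the \(N^{(i)}\) are independent Poisson point processes with intensities \(dt\otimes n^{(i)}\).

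Finally, \(N=\sum_{i\ge0}\tilde N^{(i)}\) holds identically by disjointness and countable additivity. Dropping the a.s.\ zero term \(\tilde N^{(0)}\) and using the a.s.\ identifications \(\tilde N^{(i)}=N^{(i)}\) gives \(N=\sum_{i\in I}N^{(i)}\) almost surely. The only mildly delicate step is the handling of overlapping and null sets and the countable intersection of almost-sure events. There is no real obstacle beyond bookkeeping.
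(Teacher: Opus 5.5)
Your proposal is correct and follows the paper's argument. The paper's one-line justification ("by the basic property of Poisson point processes") is exactly the restriction-to-disjoint-sets property that you unpack. Your disjointification $\tilde D_0^{(i)}$, together with the observation that $dt\otimes n$-null sets almost surely carry no atoms of $N$, supplies bookkeeping the paper leaves implicit. That bookkeeping is needed, because \eqref{Eq03} only makes the classes disjoint and exhaustive up to $n$-null sets.
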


\section{Excursion-class-dependent inverse-subordinator time changes}
\label{section:timeChange}

In this section, we introduce the state-dependent time-changed process \(X^{\ast}\), which generalizes the time change by an inverse subordinator.
The process \(X^{\ast}\) is our main target of consideration in the present paper.

We first define the random clock \(T\) and its right-continuous inverse \(A\), and set \(X^\ast_{t}:=X_{A_{t}}\).
We then describe how each excursion of \(X\) is transformed into an excursion of \(X^\ast\).
The crucial point is that the transformed excursions form a Poisson point process obtained from the original excursion point process by a marking-and-mapping procedure.
While the time change not necessarily preserves the Markov and regenerative properties, this allows us to retain a tractable excursion structure.
This observation underlies the main structural results of the present paper.

\subsection{Construction of the random clock} \label{section:randomClock}

In this subsection, we construct \(A\) and define the time-changed process \(X^{\ast}\).

Let \((D_{0}^{(i)})_{i\in I}\) be a measurable partition of \(D_{0}\) introduced in the previous section.
Let \(J\) be a finite or countable set and let \((Z^{(j)})_{j\in J}\) be independent subordinators that are also independent of \(X\).
We suppose each \(Z^{(j)}\) has almost surely strictly increasing paths.
In other words, when the L\`evy--Khintchine representation of the Laplace exponent \(\psi^{(j)}\) of \(Z^{(j)}\) is given by
\begin{align}
\psi^{(j)}(q) := -\log E[e^{-qZ^{(j)}_{1}}] = a_{j}q + \int_{0}^{\infty}(1 - e^{-qx})\nu_{j}(dx) \quad (q \geq 0), \label{Eq06}
\end{align}
we assume \(a_{j} > 0\) or \(\nu_{j}(0,1) = \infty\) (see e.g., \cite[p.16]{BertoinLevy}).

Write \(I_0:=I\cup\{0\}\), and let
\[
f:I_0\to J
\]
be a given map.
The role of \(f\) is to assign to each excursion class \(D_0^{(i)}\) and the singleton \(\{o\}\) one of the subordinators \((Z^{(j)})_{j\in J}\).

For each \(i\in I\), we define the occupation time of excursions in \(D^{(i)}_{0}\) up to time \(t\) by
\[
O^{(i)}(t) := \int_{[0,L(t))\times D_0} \zeta(e) N^{(i)}(ds\,de) + \left(t-\eta(L(t)-)\right) 1\{N^{(i)}(\{L(t)\}\times D_0)=1\} \quad (t\geq 0).
\]
We also define the occupation time at \(o\) up to time \(t\) by
\[
O^{(0)}(t) := \int_{0}^{t}1\{X_{s} = o\}ds \quad (t\geq 0).
\]
For each \(j \in J\), we group together the occupation times of all excursion classes assigned to the same subordinator \(Z^{(j)}\), that is, we set
\[
\tilde{O}^{(j)}(t)
:=
\sum_{i \in I_{0},f(i)=j} O^{(i)}(t)
\quad (t\geq0).
\]
We then define the random clock by
\[
T_{t}
:=
\sum_{j \in J} Z^{(j)}\left(\tilde{O}^{(j)}(t)\right) 
\quad (t \geq 0).
\]
Throughout the present paper, we always assume that
\begin{align}  
T_t<\infty
\quad \text{for every } t\ge0
\ \text{a.s.} \label{Eq04}
\end{align}
Its right-continuous inverse is defined by
\[
A_u := \inf\{t\ge0 \mid T_t>u\} \quad (u\ge0),
\]
and we finally define the time-changed process
\[
X_u^\ast:=X_{A_u} \quad (u \geq 0).
\]

\begin{Rem}
If \(J\) is finite, the finiteness \eqref{Eq04} is obviously satisfied.
When \(J\) is infinite, under \eqref{Eq04}, the process \(T=(T_t)_{t\ge0}\) is almost surely non-decreasing and c\`adl\`ag.
In both cases, since
\[
\sum_{i\in I_0} O^{(i)}(t)=t \quad (t \geq 0),
\]
and each \(Z^{(j)}\) has almost surely strictly increasing paths, it follows that \(T\) is almost surely strictly increasing.
Consequently, \(A\) is almost surely continuous.
\end{Rem}

\begin{Rem}
The usual time change by an inverse subordinator corresponds to the case in which \(f\) is constant.
We refer to this as the \textit{homogeneous case}.
Indeed, when \(f \equiv j_{0}\), we see that \(T_{t} = Z^{(j_{0})}(t)\) and \(A_{u}\) is the right-continuous inverse of \(Z^{(j_{0})}\).
Thus, the present construction extends the usual inverse subordinator time change by allowing different excursion classes to be driven by different subordinators.

The original process is also included as the degenerate homogeneous case: if \(Z^{(j_0)}_t=t\), then \(T_t=t\), \(A_t=t\), and hence \(X^\ast=X\).
This observation implies that our framework can also treat the original process \(X\).
\end{Rem}

\begin{Rem}
The time-changed process \(X^\ast\) is not, in general, regenerative or
time-homogeneous Markovian, even when \(X\) is Markovian.
We illustrate this in the homogeneous case; \(T_t=Z_t\).
We denote the Laplace exponent of \(Z\) by \(\psi\).

First, let \(X\) be a continuous-time Markov chain on a countable state space, and suppose that \(o\) has an exponential holding time with mean \(1/q > 0\).
Let
\[
\tau:=\inf\{t\ge0 \mid X_t\neq o\},
\quad
\tau^\ast:=\inf\{t\ge0 \mid X^\ast_t\neq o\}.
\]
The first exit time of \(X^\ast\) from \(o\) is \(Z_{\tau}\) almost surely.
Hence,
\[
\bE[e^{-\lambda \tau^\ast}]
=
\bE[e^{-\tau\psi(\lambda)}]
=
\frac{q}{q+\psi(\lambda)}
\quad (\lambda\ge0).
\]
Unless \(\psi\) is linear, this is not the Laplace transform of an exponential random variable.
Thus, except for the case in which \(Z\) is a deterministic drift, the holding time of \(X^\ast\) at \(o\) is not exponentially distributed.  Since a regenerative process with a holding time at its regeneration point must have an exponentially distributed holding time
\cite[Theorem 29.7(iii)]{Kallenberg-third}, \(X^\ast\) is not regenerative at \(o\) in general.

We next see that time-homogeneous Markov property is not preserved in
general.
Let \(X\) be a Brownian motion, and suppose \(Z\) has a non-trivial L\'evy measure.
Then, there exists \(\delta>0\) such that with a positive probability the range of \(Z\) entirely skips the interval \([\delta/2, \delta]\).
On this event, the clock \(A\) is constant on \([\delta/2, \delta]\).
Let \(E\) be the event that \(X^\ast_t = X^\ast_{\delta/2}\) for every \(t \in [\delta/2, \delta]\).
Then we have \(\bP[E] > 0\).
If \(X^\ast\) were a time-homogeneous Markov process with respect to its natural filtration, the spatial homogeneity of Brownian motion and the Markov property at time \(\delta/2\) would imply
\[
\bP[E] = \bP\left[ X^{\ast}_{t}=X^{\ast}_{0} \text{ for every } t \in [0,\delta/2] \right].
\]
However, since \(Z\) is strictly increasing, the clock \(A\) is strictly positive for \(t>0\).
Because Brownian motion does not remain at its starting point over any non-trivial interval, the right-hand side is zero, contradicting \(\bP[E] > 0\).
\end{Rem}

\begin{Rem}
In the state-space partition setting, we have
\[
O^{(i)}(t)=\int_0^t  1_{\{X_s\in S_i \}}\,ds,
\]
and the above construction reduces to a state-dependent random clock driven by the occupation times of the sets \(S_i\).
\end{Rem}

In the inverse-local-time scale, the clock \(T\) becomes a subordinator.
A more refined decomposition, separating the time spent at \(o\) from the
excursion lifetimes, will be given later in Proposition \ref{prop:Gamma0LawAndIndependence}.

\begin{Prop} \label{prop:TetaIsSubordinator}
The process \((T_{\eta_t})_{t\ge0}\) is a subordinator with Laplace exponent
\[
c \psi^{(f(0))}(q)
+
\sum_{j\in J}\sum_{i:f(i)=j}
\int_0^\infty \bigl(1-e^{-\psi^{(j)}(q)x}\bigr) n^{(i)}(\zeta\in dx).
\]
\end{Prop}

\begin{proof}
Note that
\[
O^{(i)}(\eta_t)=\int_{[0,t]\times D_0}\zeta(e) N^{(i)}(ds\,de)
\quad (i\in I)
\]
and, by Theorem \ref{thm:existenceExcursionMeasure} (v),
\[
O^{(0)}(\eta_t)=ct
\quad \text{a.s.}
\]

For each \(j\in J\), define
\[
B_t^{(j)}
:=
ct 1_{\{f(0)=j\}}
+
\sum_{i:f(i)=j}\int_{[0,t]\times D_0}\zeta(e) N^{(i)}(ds\,de).
\]
Then \((B^{(j)})_{j\in J}\) is a family of independent subordinators, and it is independent of the family \((Z^{(j)})_{j\in J}\).
By Bochner's subordination, the processes \((Z^{(j)}(B_t^{(j)}))_{t\ge0}\) are independent subordinators with Laplace exponents
\[
c\,1_{\{f(0)=j\}}\psi^{(j)}(q)
+
\sum_{i:f(i)=j}\int_0^\infty \bigl(1-e^{-\psi^{(j)}(q)x}\bigr) n^{(i)}(\zeta\in dx).
\]
Since
\[
T_{\eta_t}
=
\sum_{j\in J} Z^{(j)}(B_t^{(j)}),
\]
it follows that \((T_{\eta_t})_{t\ge0}\) is a subordinator with the desired Laplace exponent.
\end{proof}

\subsection{The transformed excursion point process} \label{section:excursionTimeChange}

We show that the excursions of the time-changed process \(X^{\ast}\) are governed by a new Poisson point process obtained by transforming the excursions of \(X\) one by one according to the increments of the driving subordinators.

Since \(A\) is continuous and non-decreasing, each excursion interval of \(X\) gives rise to a unique excursion interval of \(X^\ast\), obtained by transforming the original interval through the time change.
The following statements illustrate this correspondence precisely.

\begin{Prop} \label{prop:excursionIntervalCorrespondence}
Let \(I=(g,d)\) or \(I=[g,d)\) be an excursion interval of \(X\). Then
\[
A_u\in(g,d)
\quad\Longleftrightarrow\quad
u\in(T_g,T_{d-}),
\]
and
\[
A_u\in[g,d)
\quad\Longleftrightarrow\quad
u\in[T_{g-},T_{d-}).
\]
Consequently, the corresponding excursion interval of \(X^\ast\) is \((T_g,T_{d-})\) or \([T_{g-},T_{d-})\), respectively.
\end{Prop}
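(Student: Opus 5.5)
The proof will reduce both equivalences to two elementary facts about the right-continuous inverse of a strictly increasing c\`adl\`ag function. By the remarks following the construction of \(T\), almost surely \(T\) is finite, c\`adl\`ag and strictly increasing, and \(A\) is continuous. Throughout, fix \(s\ge0\) and \(u\ge0\), and write \(T_{s-}:=\sup_{t<s}T_t\) (with \(T_{0-}:=0\)). The two facts are
\begin{align}
A_u>s \iff u>T_s,
\qquad
A_u<s \iff u<T_{s-}. \label{eq:planInverse}
\end{align}

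For the first fact, suppose \(u>T_s\). Right-continuity of \(T\) at \(s\) gives \(\delta>0\) with \(T_t<u\) for all \(t\le s+\delta\). Hence \(A_u\ge s+\delta>s\). Conversely, suppose \(u\le T_s\). Then for every \(t>s\), strict monotonicity gives \(T_t>T_s\ge u\), so \(A_u\le s\).

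For the second fact, \(A_u<s\) means there is some \(t<s\) with \(T_t>u\) (the infimum is strictly below \(s\)). This is equivalent to \(u<\sup_{t<s}T_t=T_{s-}\).

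Taking complements in \eqref{eq:planInverse} also gives
\[
A_u\le s \iff u\le T_s,
\qquad
A_u\ge s\iff u\ge T_{s-}.
\]

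The two equivalences of the statement now follow by intersecting these conditions. For the open case,
\[
A_u\in(g,d)\iff \{A_u>g\}\cap\{A_u<d\}\iff T_g<u<T_{d-}.
\]
For the half-open case,
\[
A_u\in[g,d)\iff \{A_u\ge g\}\cap\{A_u<d\}\iff T_{g-}\le u<T_{d-}.
\]
When \(d=\infty\), I interpret \(T_{d-}\) as \(\lim_{t\to\infty}T_t\), and the same argument applies with \(A_u<\infty\iff u<\sup_t T_t\). Since \(T\) is strictly increasing, \(T_g<T_{d-}\) and \(T_{g-}\le T_g<T_{d-}\), so both \(u\)-intervals are nonempty.

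For the ``consequently'' part, I use \(X^\ast_u=X_{A_u}\). Thus \(X^\ast_u\ne o\) exactly when \(A_u\) lies in some excursion interval of \(X\). By the equivalences, the set of such \(u\) coming from \(I\) is exactly \((T_g,T_{d-})\), respectively \([T_{g-},T_{d-})\).

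It remains to show this set is a maximal interval of \(\{u \mid X^\ast_u\ne o\}\). I do this through the endpoints.
\begin{itemize}
\item By \eqref{eq:planInverse}, \(A_{T_g}=g\), and \(A_{T_{d-}}=d\) when \(d<\infty\).
\item Since \(g\) and \(d\) are not interior to any excursion interval, the points \(g\) and \(d\) are not mapped into \(I\).
\item In the case \(I=(g,d)\) we have \(X_g=o\), hence \(X^\ast_{T_g}=o\). Moreover \(X_d=o\) always, since \(d\) is the right end of a component of the complement of \(\Xi\) and \(X_d\ne o\) would put \(d\) inside an excursion interval.
\end{itemize}

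The main point requiring care is the boundary case \(u=T_g\) versus \(u\) in \([T_{g-},T_g)\). When \(T\) jumps at \(g\), all these \(u\) satisfy \(A_u=g\). Whether they belong to the excursion interval of \(X^\ast\) is exactly whether \(g\in I\), that is, whether \(X_g\ne o\). This is precisely why the open and half-open cases produce the left endpoints \(T_g\) and \(T_{g-}\), respectively.
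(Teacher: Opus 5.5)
Your proof is correct and follows the paper's own argument. The paper likewise reduces everything to the equivalences $A_u>t\iff u>T_t$, $A_u\ge t\iff u\ge T_{t-}$ and $A_u<t\iff u<T_{t-}$, intersects them, and leaves the ``consequently'' part implicit. Your endpoint discussion of that part goes beyond what the paper gives, but for $I=[g,d)$ left-maximality still needs one line. Since $g\in\overline{\Xi}\setminus\Xi$ is a left limit of points $s_n\in\Xi$, and $A_{T_{s_n}}=s_n$, the times $T_{s_n}$ are zeros of $X^\ast$ increasing to $T_{g-}$.
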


\begin{proof}
Since \(A\) is continuous and non-decreasing and \(T\) is strictly increasing, we may easily check the following equivalences:
\[
A_u>t \iff u>T_t,
\quad
A_u\ge t \iff u\ge T_{t-},
\quad
A_u<t \iff u<T_{t-}.
\]
The stated equivalences follow immediately from these relations.
\end{proof}

\begin{Cor}
Conversely, let \(I'=(g',d')\) or \(I'=[g',d')\) be an excursion interval of \(X^{\ast}\).
Then the corresponding excursion interval of \(X\) is \((A_{g'},A_{d'})\) or \([A_{g'},A_{d'})\), respectively.
\end{Cor}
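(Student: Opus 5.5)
The corollary is the inverse direction of Proposition \ref{prop:excursionIntervalCorrespondence}, so I will derive it from that proposition together with the fact that \(X^\ast_u=X_{A_u}\). I also need the pair of identities \(A_{T_t}=t\) and \(A_{T_{t-}}=t\), which hold because \(T\) is strictly increasing and \(A\) is continuous.

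The first step is to check that the correspondence really is a bijection between excursion intervals. We have \(X^\ast_u\neq o\) if and only if \(A_u\notin \Xi\), which in turn holds if and only if \(A_u\) lies in some excursion interval \(I_n\) of \(X\). By the proposition, the set \(\{u \mid A_u\in I_n\}\) is exactly one of the intervals \((T_g,T_{d-})\) or \([T_{g-},T_{d-})\). These preimages are disjoint for distinct \(n\), and their union is \(\{u\mid X^\ast_u\neq o\}\).

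Each preimage is a maximal interval of this set. The open case is the easier one: the endpoints \(u=T_g\) and \(u=T_{d-}\) satisfy \(A_u=g\) and \(A_u=d\), and both \(g\) and \(d\) belong to \(\Xi\), or to \(\overline{\Xi}\) in the case of \(d\). The closed-at-\(g\) case needs more care, and I treat it in the last step. Granting maximality, every excursion interval \(I'\) of \(X^\ast\) equals the image of a unique interval \(I\) of \(X\), with matching open or half-open type.

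Next I compute the endpoints. If \(I'=(g',d')=(T_g,T_{d-})\), then \(A_{g'}=A_{T_g}=g\) and \(A_{d'}=A_{T_{d-}}=d\). The identity \(A_{T_{t-}}=t\) follows from the equivalences in the proof of the proposition: \(A_u\ge t\) if and only if \(u\ge T_{t-}\), and \(A_u<t\) for \(u<T_{t-}\); continuity of \(A\) then gives \(A_{T_{t-}}=t\). The half-open case is identical, with \(g'=T_{g-}\) and \(A_{T_{g-}}=g\). So \(I=(A_{g'},A_{d'})\) or \(I=[A_{g'},A_{d'})\), respectively.

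The main obstacle is the half-open case \([g,d)\). There \(g\notin\Xi\) but \(g\in\overline{\Xi}\), and for \(u\in[T_{g-},T_g]\) we have \(A_u=g\), so \(X^\ast_u=X_g\neq o\). Hence the whole stretch \([T_{g-},T_g]\) lies in the excursion interval of \(X^\ast\), and the interval really is closed at \(T_{g-}\). To the left of \(T_{g-}\), however, \(A_u<g\) can sit in \(\overline{\Xi}\setminus\Xi\) or in \(\Xi\) arbitrarily close to \(g\). I will argue that \(g\) is a left accumulation point of \(\overline{\Xi}\): the set \(\overline{\Xi}\) is perfect and \(g\) is isolated from the right, so \(g\) must be a limit of points from the left. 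This ensures that the preceding points map to zeros of \(X^\ast\) accumulating at \(T_{g-}\), which gives maximality.
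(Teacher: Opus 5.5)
Your proposal is correct and follows essentially the same route as the paper: for \(u\in I'\) you place \(A_u\) in an excursion interval of \(X\), use Proposition \ref{prop:excursionIntervalCorrespondence} to identify \(I'\) with the transformed interval, and read off the endpoints via \(A_{T_t}=A_{T_{t-}}=t\). Your extra work is making explicit the maximality of the transformed intervals, which the paper takes from the ``consequently'' clause of that proposition; for that step, note that you need \(d\in\Xi\) rather than merely \(d\in\overline{\Xi}\), and points of \(\Xi\) itself (not just of \(\overline{\Xi}\)) accumulating at \(g\) from the left, both of which follow immediately from \((g,d)\) or \([g,d)\) being a connected component of \([0,\infty)\setminus\Xi\).
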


\begin{proof}
Take \(u\in I'\).
Then \(A_{u}\) belongs to some excursion interval \(I\) of \(X\).
By Proposition \ref{prop:excursionIntervalCorrespondence}, the corresponding excursion interval of \(X^\ast\) obtained from \(I\) contains \(u\).
Since \(I'\) is an excursion interval of \(X^\ast\), it must coincide with that transformed interval.
Applying \(A\) to the endpoints yields the claim.
\end{proof}

\begin{Rem} \label{rem:EndpointOfExcursionInterval}
Since the endpoints of excursion intervals form an at most countable set for each sample path of \(X\), and since the driving subordinators are independent of \(X\) and have no fixed jump times, we may easily confirm that, outside a null set, the process \(T\) is continuous at every excursion endpoint of \(X\).
Hence, outside the null set, one may replace \(T_{g-}\) and \(T_{d-}\) by \(T_{g}\) and \(T_d\), respectively.
\end{Rem}

For \(j \in J\), \(a \geq 0\), and \(\ell \geq 0\), define
\[
M^{(j)}_{a,\ell}(u)
:=
Z^{(j)}\bigl((u+a)\wedge(a+\ell)\bigr)-Z^{(j)}(a)
\quad (u \geq 0).
\]
Thus, \(M^{(j)}_{a,\ell}\) records the increment path of \(Z^{(j)}\) over an interval of length \(\ell\), starting from time \(a\).
Let \(\cM\) denote the space of non-decreasing c\`adl\`ag functions from \([0,\infty)\) to \([0,\infty)\).
For \(m\in\cM\), let
\[
m^{-1}(t):=\inf\{u\ge0 \mid m(u)>t\}
\quad (t\ge0),
\]
be the right-continuous inverse of \(m\).
We then define the map \(\Theta:D_0\times\cM\to D_0\)
by
\[
\Theta(e,m)_t:=e_{m^{-1}(t)}
\quad (t\ge0),
\]
where we use the convention \(e_{\infty}=o\).

The following lemma identifies the transformed excursions of \(X^\ast\) in terms of the original excursion point process \(N\).

\begin{Lem}\label{Lem:excursionwise-representation}
Let \((s,e)\in\Supp N\), and suppose that \(e\in D_0^{(i)}\) and \(f(i)=j\).
Then the excursion of \(X^\ast\) corresponding to \((s,e)\) is given by
\begin{align}
e^\ast
=
\Theta\left(
e,\,
M^{(j)}_{\tilde O^{(j)}(\eta_{s-}),\,\zeta(e)}
\right). \label{Eq05}
\end{align}
In particular, the family of transformed excursions
\[
\{e^\ast \mid (s,e)\in\Supp N\}
\]
is conditionally independent given \(N\).
\end{Lem}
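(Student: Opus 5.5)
We prove the representation \eqref{Eq05} by a direct pathwise computation on the excursion interval, and then deduce conditional independence from the structure of the subordinator increments.

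\emph{Step 1: locate the excursion of \(X^\ast\).} Fix \((s,e)\in\Supp N\) with \(s\le L_\infty\). By Theorem \ref{thm:existenceExcursionMeasure}(iii), the corresponding excursion interval of \(X\) has endpoints \(g=\eta_{s-}\) and \(d=\eta_s=g+\zeta(e)\). Work outside the null set of Remark \ref{rem:EndpointOfExcursionInterval}, so that \(T\) is continuous at \(g\) and \(d\). Proposition \ref{prop:excursionIntervalCorrespondence} then shows that the transformed excursion interval starts at \(T_g\), so the excursion is \(e^\ast_t=X^\ast_{T_g+t}=X_{A_{T_g+t}}\), extended by \(o\) after its lifetime.

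\emph{Step 2: identify the clock on \([g,d)\).} For \(r\in[0,\zeta(e)]\), the only occupation time that grows on \([g,g+r]\) is \(O^{(i)}\), and it grows at unit speed. This follows from the definition of \(O^{(i)}\): on \([\eta_{s-},\eta_s)\) the second term equals \(t-\eta(L(t)-)\), and \(L(t)=s\) there. All other \(O^{(k)}\), including \(O^{(0)}\), stay constant, since \(X\neq o\) on an excursion interval.

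Consequently \(\tilde O^{(j)}(g+r)=\tilde O^{(j)}(g)+r\), while \(\tilde O^{(j')}\) is constant for \(j'\neq j\). With \(a:=\tilde O^{(j)}(\eta_{s-})\), this gives
\[
T_{g+r}-T_g=Z^{(j)}(a+r)-Z^{(j)}(a)=M^{(j)}_{a,\zeta(e)}(r),\qquad 0\le r\le\zeta(e).
\]
Write \(m:=M^{(j)}_{a,\zeta(e)}\).

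\emph{Step 3: invert the clock.} For \(t<m(\zeta(e))=T_d-T_g\), we have \(A_{T_g+t}-g=m^{-1}(t)\). This follows by inverting the previous identity and using \(A_u>x\iff u>T_x\) from the proof of Proposition \ref{prop:excursionIntervalCorrespondence}. Hence \(e^\ast_t=e_{m^{-1}(t)}\) for such \(t\).

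For \(t\ge m(\zeta(e))\), the function \(m\) is constant after \(\zeta(e)\), so \(m^{-1}(t)\ge\zeta(e)\), and therefore \(e_{m^{-1}(t)}=o\) by the definition of \(D_0\) and the convention \(e_\infty=o\). This matches the fact that the excursion \(e^\ast\) has ended. Together these cases give \eqref{Eq05}.

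The main care point is the boundary case \(t=m(\zeta(e))\) and the possible jumps of \(m\) at \(0\) and at \(\zeta(e)\). Remark \ref{rem:EndpointOfExcursionInterval} removes these jumps almost surely.

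\emph{Step 4: conditional independence.} Given \(N\), the quantities \(\zeta(e)\) and \(a=\tilde O^{(j)}(\eta_{s-})\) are \(N\)-measurable. They are sums of lifetimes of earlier excursions of the classes mapped to \(j\), plus \(c\eta\)-type terms for \(j=f(0)\) via Theorem \ref{thm:existenceExcursionMeasure}(v).

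For a fixed \(j\), the intervals \([a,a+\zeta(e))\) attached to different excursions of classes in \(f^{-1}(j)\) are disjoint. Indeed, \(\tilde O^{(j)}\) increases by exactly \(\zeta(e)\) across each such excursion, so these intervals tile disjoint parts of the time axis of \(Z^{(j)}\).

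Since \((Z^{(j)})_{j\in J}\) are independent of \(X\) and of each other, and each has independent increments over disjoint intervals, the marks \(M^{(j)}_{a,\zeta(e)}\) are conditionally independent given \(N\). Their conditional law depends only on \(\zeta(e)\) and \(j\), and equals that of \(Z^{(j)}(\cdot\wedge\zeta(e))\) by stationarity of increments. Because \(\Theta\) is applied to each pair separately, the excursions \(e^\ast\) inherit this conditional independence.

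Making "given \(N\)" rigorous for countably many excursions requires a further step: verify it on finite subfamilies, for example excursions with \(\zeta>h\) up to a local time \(t\), and then extend by a monotone class argument.
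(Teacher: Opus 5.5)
Your proposal is correct and follows essentially the same route as the paper. Both compute $T(\eta_{s-}+u)-T(\eta_{s-})$ as the increment path $M^{(j)}_{\tilde O^{(j)}(\eta_{s-}),\zeta(e)}$, invert it to identify $A$ on the transformed excursion interval, and deduce conditional independence from the disjointness of the increment intervals of the independent subordinators. Your extra care with the continuity of $T$ at excursion endpoints, the range $t\ge m(\zeta(e))$, and the conditional law of the marks makes explicit what the paper leaves implicit.
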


\begin{proof}
Let \((s,e)\in\Supp N\), \(e\in D_0^{(i)}\), and \(f(i)=j\).
Since \(e\) is the excursion corresponding to the interval \([\eta_{s-},\eta_s)\), we have
\[
X_t=e_{t-\eta_{s-}},
\quad t\in[\eta_{s-},\eta_s).
\]
Moreover, by the definition of \(T\) and \(\tilde O^{(j)}\), for \(u\in[0,\eta_s-\eta_{s-})=[0,\zeta(e))\),
\begin{align*}
T(u+\eta_{s-})-T(\eta_{s-})
&=
Z^{(j)} \left(\tilde O^{(j)}(u+\eta_{s-})\right)
-
Z^{(j)} \left(\tilde O^{(j)}(\eta_{s-})\right) \\
&=
Z^{(j)} \left(u+\tilde O^{(j)}(\eta_{s-})\right)
-
Z^{(j)} \left(\tilde O^{(j)}(\eta_{s-})\right) \\
&=
M^{(j)}_{\tilde O^{(j)}(\eta_{s-}),\zeta(e)}(u).
\end{align*}
Hence, for \(t\in[0,T(\eta_s)-T(\eta_{s-}))\),
\begin{align*}
A\bigl(t+T(\eta_{s-})\bigr)-\eta_{s-}
&=
\inf\{u\ge0 \mid T(u+\eta_{s-})>t+T(\eta_{s-})\} \\
&=
\inf\{u\ge0 \mid M^{(j)}_{\tilde O^{(j)}(\eta_{s-}),\zeta(e)}(u)>t\} \\
&=
\left(M^{(j)}_{\tilde O^{(j)}(\eta_{s-}),\zeta(e)}\right)^{-1}(t).
\end{align*}
Therefore,
\[
X^\ast_{t+T(\eta_{s-})}
=
X_{A(t+T(\eta_{s-}))}
=
e_{A(t+T(\eta_{s-}))-\eta_{s-}}
=
e{\left(M^{(j)}_{\tilde O^{(j)}(\eta_{s-}),\zeta(e)}\right)^{-1}(t)}.
\]
This proves
\[
e^\ast
=
\Theta\left(
e,
M^{(j)}_{\tilde O^{(j)}(\eta_{s-}),\zeta(e)}
\right).
\]
Since different excursions of \(X\) correspond to disjoint increment intervals of the corresponding subordinators, the conditional independence follows.
\end{proof}

We now define the transformed excursion point process \(N^\ast\) by
\[
N^\ast
:=
\sum_{(s,e) \in \Supp N}\delta_{(s,e^{\ast})}.
\]
That is, \(N^\ast\) is the point process on \([0,\infty)\times D_0\) with unit masses at the transformed excursions \((s,e^\ast)\) corresponding to the atoms \((s,e)\) of \(N\).

\begin{Rem}
The point process \(N^\ast\) is almost surely locally finite.
More precisely, we have for every \(t\ge0\) and \(h>0\),
\begin{equation}\label{Eq02}
N^\ast\bigl([0,t]\times\{\zeta > h\}\bigr)<\infty
\quad \text{a.s.}
\end{equation}
Indeed, for each \((s,e^\ast)\in\Supp N^\ast\), since we have
\[
\zeta(e^\ast)=T(\eta_s)-T(\eta_{s-}),
\]
it follows
\[
\int_{[0,t]\times D_0}\zeta(e)\,N^\ast(ds\,de)
\leq
T(\eta_t)<\infty
\quad \text{a.s.},
\]
which implies \eqref{Eq02}.
\end{Rem}

We next check that \(N^\ast\) is obtained by a marking-and-mapping procedure for \(N\), which concludes that \(N^{\ast}\) is a Poisson point process.
For \(e\in D_0^{(i)}\) with \(f(i)=j\), define a probability kernel \(\nu\) from \(D_0\) to \(D_0\) by
\[
\nu(e,\cdot)
:=
\cL\left(
\Theta \left(e,M^{(j)}_{0,\zeta(e)}\right)
\right),
\]
where we denote the law of a random variable \(Y\) by \(\cL(Y)\).
Let \(n^\ast\) be the measure on \(D_0\) defined by
\[
n^\ast(de)
:=
\int_{D_0}\nu(f,de)n(df).
\]

\begin{Thm}\label{Thm:transformed-excursion-PPP}
The transformed excursion point process \(N^\ast\) of \(X^\ast\) is a Poisson point process on \([0,\infty)\times D_0\) with intensity measure
\[
ds\otimes n^\ast.
\]
\end{Thm}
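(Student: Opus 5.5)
The plan is to show that, conditionally on \(N\), the marks
\[
M_{(s,e)}:=M^{(j)}_{\tilde O^{(j)}(\eta_{s-}),\,\zeta(e)}\qquad (e\in D_0^{(i)},\ f(i)=j)
\]
attached to the atoms \((s,e)\in\Supp N\) are independent, with \(M_{(s,e)}\) distributed as \(M^{(j)}_{0,\zeta(e)}\). Granted this, Lemma \ref{Lem:excursionwise-representation} gives \(N^\ast=\sum_{(s,e)\in\Supp N}\delta_{(s,\Theta(e,M_{(s,e)}))}\). So \(N^\ast\) is the image of an independently \(\nu\)-marked Poisson process with intensity \(ds\otimes n\) under the measurable map \((s,e,e')\mapsto(s,e')\). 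The marking theorem says the marked process \(\sum\delta_{(s,e,e^\ast)}\) is Poisson with intensity \(ds\,n(de)\,\nu(e,de')\). The mapping theorem then gives intensity \(ds\otimes n^\ast\). Before this, I will record that \(\Theta\) is jointly measurable on \(D_0\times\cM\). I will also note that \(\nu\) is a genuine probability kernel, because \(e\mapsto\cL(\Theta(e,M^{(j)}_{0,\zeta(e)}))\) is measurable by Fubini. The local finiteness \eqref{Eq02} ensures that \(N^\ast\) is a well-defined point process.

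The key step is the conditional law of the marks given \(N\). Fix \(j\). Conditionally on \(N\), the starting points \(a_{(s,e)}:=\tilde O^{(j)}(\eta_{s-})\) are deterministic. For atoms with \(e\in\bigcup_{f(i)=j}D_0^{(i)}\), the intervals \([a_{(s,e)},a_{(s,e)}+\zeta(e))\) are pairwise disjoint. The reason is that \(\tilde O^{(j)}\) increases exactly along excursions of class \(j\) and along time at \(o\) when \(f(0)=j\). Since \(Z^{(j)}\) is independent of \(N\) and has stationary independent increments, its increment paths over disjoint deterministic intervals are independent. Each such increment path is distributed as \(M^{(j)}_{0,\ell}\) with \(\ell\) the interval length. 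Independence across different \(j\) follows from the independence of the family \((Z^{(j)})_{j\in J}\) and its independence from \(X\).

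The main obstacle is rigor. There are countably many atoms, and the starting points \(a_{(s,e)}\) are random functionals of \(N\), not fixed times. My first approach is to condition on \(N\), equivalently on \(X\), and use independence to treat \(N\) as a fixed realization. The finite-dimensional joint law of the marks is then determined by finitely many increments of Lévy processes. I will check it by computing joint Laplace functionals \(\bE[\exp(-\sum_k g_k(M_{(s_k,e_k)}))\mid N]\) for finitely many atoms, say those with \(\zeta>h\) in \([0,t]\). These atoms can be enumerated measurably, and letting \(h\downarrow0\), \(t\uparrow\infty\) yields the full statement.

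Equivalently, I would verify the Laplace functional of \(N^\ast\) directly. For nonnegative measurable \(g\) supported on \([0,t]\times\{\zeta>h\}\),
\[
\bE\Bigl[e^{-\int g\,dN^\ast}\Bigr]
=\bE\Bigl[\prod_{(s,e)\in\Supp N}\int e^{-g(s,e')}\nu(e,de')\Bigr]
=\exp\Bigl(-\int ds\int n^\ast(de')\bigl(1-e^{-g(s,e')}\bigr)\Bigr),
\]
where the first equality uses the conditional independence above and the second is the Poisson Laplace functional of \(N\). This identifies \(N^\ast\) as Poisson with intensity \(ds\otimes n^\ast\).

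A detail to handle is the restriction to \(\{\zeta(e^\ast)>h\}\). It does not correspond to a fixed set in the original \(\zeta(e)\). It is therefore cleaner to work directly with the Laplace functional of \(N\) applied to the function \(e\mapsto\int(1-e^{-g(s,e')})\nu(e,de')\), using monotone convergence over truncations in \(s\).
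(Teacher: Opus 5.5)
Your proposal is correct and follows the paper's route: the marks are the increment paths of $Z^{(j)}$ over the disjoint intervals $[B^{(j)}_{s-},B^{(j)}_{s})$, so they are conditionally independent given $N$ with laws $\mathcal{L}(M^{(j)}_{0,\zeta(e)})$ (Lemma~\ref{Lem:excursionwise-representation}), and the marking and mapping theorem then gives the intensity $ds\otimes n^\ast$. Your identification of the starting points $\tilde O^{(j)}(\eta_{s-})$ as functionals of $N$, and your direct Laplace-functional check, only make explicit what the paper's one-line proof leaves implicit.
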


\begin{proof}
From the conditional independence shown in Lemma \ref{Lem:excursionwise-representation},
the conclusion directly follows from the marking and mapping theorem for Poisson point processes; see, for example, \cite[Theorem 15.3]{Kallenberg-third}.
\end{proof}

For each \(i\in I\), define \(N^{\ast,(i)}\) by the transformed point process obtained from \(N^{(i)}\) by the same excursion-wise time-change procedure.
Precisely, we define
\[
N^{\ast,(i)} := \sum_{(s,e) \in \Supp N^{(i)}}\delta_{(s,e^{\ast})},
\]
where \(e^{\ast}\) is defined by \eqref{Eq05}.
Note that, in general, the fact that an excursion \(e\) of \(X\) belongs to \(D_0^{(i)}\) does not imply that the transformed excursion \(e^\ast\) also belongs to \(D_0^{(i)}\).
We shall say that the measurable partition \((D_0^{(i)})_{i\in I}\) is \emph{preserved by the time change} if, for every \(i\in I\),
\[
\nu(e,D_0^{(i)})=1
\quad
\text{for \(n\)-almost every } e\in D_0^{(i)}.
\]
Under this condition, the partition property \eqref{Eq03} also holds with \(n\) replaced by \(n^{\ast}\), which can be confirmed immediately from the definition of \(n^{\ast}\).
Also note that this condition is automatically satisfied in the state-space partition setting.
The following proposition shows that, under the partition preservation assumption, the transformed excursion measure \(n^\ast\) is concentrated on the same measurable partition \((D_0^{(i)})_{i\in I}\).

\begin{Prop}
Suppose that the measurable partition \((D_0^{(i)})_{i\in I}\) is preserved by the time change.
Then
\[
N^{\ast,(i)} = N^{\ast}\bigl(\cdot \cap([0,\infty)\times D_0^{(i)})\bigr)
\quad \text{a.s.}
\]
Consequently, the family \((N^{\ast,(i)})_{i\in I}\) consists of independent Poisson point processes with intensity measures
\[
ds\otimes n^{\ast,(i)},
\quad
n^{\ast,(i)} :=n^\ast(\cdot \cap D_0^{(i)}).
\]
\end{Prop}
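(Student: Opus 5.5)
We first prove the identity $N^{\ast,(i)} = N^{\ast}(\cdot\cap([0,\infty)\times D_0^{(i)}))$ almost surely, and then read off the Poisson structure from Theorem \ref{Thm:transformed-excursion-PPP}.

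By definition, $N^\ast=\sum_{i\in I}N^{\ast,(i)}$ almost surely. This holds because $N=\sum_i N^{(i)}$ and $n$-almost every excursion lies in exactly one class, so almost surely every atom of $N$ lies in exactly one $D_0^{(i)}$. The key claim is that, almost surely, for every atom $(s,e)$ of $N^{(i)}$ the transformed excursion $e^\ast$ lies in $D_0^{(i)}$ and in no other $D_0^{(k)}$, $k\neq i$. Once this holds, restricting $N^\ast$ to $[0,\infty)\times D_0^{(i)}$ keeps exactly the atoms coming from $N^{(i)}$, which gives the identity.

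To prove the claim, I count bad atoms. Let $B_i$ be the set of $e\in D_0^{(i)}$ whose image falls outside $D_0^{(i)}$ or inside some other class:
\[
B_i:=\Bigl\{e\in D_0^{(i)}\Bigr\}\ \text{with}\ \Theta\bigl(e,M^{(f(i))}_{0,\zeta(e)}\bigr)\notin D_0^{(i)}\setminus\textstyle\bigcup_{k\ne i}D_0^{(k)} .
\]
I would apply the marking theorem in the form used for Theorem \ref{Thm:transformed-excursion-PPP}: the marked process $\sum\delta_{(s,e,e^\ast)}$ is Poisson with intensity $ds\,n(de)\,\nu(e,de')$. The conditional law of $e^\ast$ given $N$ is $\nu(e,\cdot)$, because increments of $Z^{(j)}$ after time $\tilde O^{(j)}(\eta_{s-})$ have the same law as those of $M^{(j)}_{0,\zeta}$ by the stationarity and independence of increments. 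The expected number of atoms with $e\in D_0^{(i)}$ and $e^\ast$ in the bad set is then
\[
\int_0^\infty ds\int_{D_0^{(i)}} n(de)\,\nu\Bigl(e,\bigl(D_0^{(i)}\bigr)^c\cup\textstyle\bigcup_{k\ne i}\bigl(D_0^{(k)}\cap D_0^{(i)}\bigr)\Bigr).
\]
The first part vanishes by the preservation assumption. For the overlap part, for $n$-almost every $e\in D_0^{(k)}$ with $k\neq i$, the kernel $\nu(e,\cdot)$ charges $D_0^{(k)}$ fully. Hence
\[
n^\ast\bigl(D_0^{(i)}\cap D_0^{(k)}\bigr)=\sum_l\int_{D_0^{(l)}}\nu\bigl(e,D_0^{(i)}\cap D_0^{(k)}\bigr)\,n(de).
\]
In this sum, only $l=i$ and $l=k$ can contribute, and the term for $l=i$ is bounded by $\int_{D_0^{(i)}}\nu(e,D_0^{(k)})\,n(de)$. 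The main obstacle is exactly this overlap: preservation alone does not force $\nu(e,D_0^{(k)})=0$ for $e\in D_0^{(i)}$. I would first handle it by replacing the partition by a genuinely disjoint one, $\tilde D_0^{(i)}:=D_0^{(i)}\setminus\bigcup_{k<i}D_0^{(k)}$. This modification is $n$-null and $n^\ast$-null (the latter by the computation behind \eqref{Eq03} for $n^\ast$, which the paper says follows from the definition). Since $N$ and $N^\ast$ a.s. charge no null sets on $[0,t]$, the restrictions do not change almost surely. Summing over $i$ and using the fact that a Poisson process has no atoms in a set of zero intensity then yields the identity almost surely.

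Independence and the intensities then follow from the basic restriction property of Poisson point processes applied to $N^\ast$, exactly as in the earlier proposition decomposing $N$. Since $N^\ast$ is Poisson with intensity $ds\otimes n^\ast$ by Theorem \ref{Thm:transformed-excursion-PPP}, its restrictions to the disjoint sets $[0,\infty)\times\tilde D_0^{(i)}$ are independent Poisson processes with intensities $ds\otimes n^\ast(\cdot\cap D_0^{(i)})$.
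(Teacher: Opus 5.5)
Your route is the paper's: preservation sends each atom of $N^{(i)}$ into $D_0^{(i)}$, and the Poisson structure is then obtained from Theorem~\ref{Thm:transformed-excursion-PPP} by restriction. The gap is at the overlap step. You correctly note that preservation says nothing about $\nu(e,D_0^{(k)})$ for $e\in D_0^{(i)}$, $k\ne i$. You then declare the disjointified sets $n^\ast$-equivalent to the original ones by citing the $n^\ast$-version of \eqref{Eq03}. But that citation is exactly the assertion $\int_{D_0^{(l)}}\nu(e,D_0^{(i)}\cap D_0^{(k)})\,n(de)=0$ for all $l$, which you had just said does not follow. For the same reason, your claim that only $l\in\{i,k\}$ contributes is unjustified: preservation gives $\nu(e,D_0^{(l)})=1$, not that $\nu(e,\cdot)$ ignores the $n$-null set $D_0^{(l)}\cap D_0^{(i)}\cap D_0^{(k)}$.

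This is not a technicality. Transformed excursions typically live on an $n$-null set of paths, so $n$-null overlaps can carry all of $n^\ast$. For example, let $X$ be Brownian motion, let $A\subset D_0$ be the set of excursions that are constant on some nondegenerate subinterval of $(0,\zeta)$, and set $D_0^{(1)}:=D_0^{(+)}\cup A$, $D_0^{(2)}:=D_0^{(-)}\cup A$, with $D_0^{(\pm)}$ as in Section~\ref{section:example}. Then $n(A)=0$, so this is a measurable partition in the sense of \eqref{Eq03}, and it is preserved. However, if the subordinators assigned to both classes have jumps (e.g.\ $\gamma$-stable with $\gamma<1$), every $e^\ast$ is flat across each jump of the increment path, hence lies in $A$. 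Consequently $n^\ast(D_0^{(1)}\cap D_0^{(2)})=n^\ast(D_0)>0$, and $N^\ast(\cdot\cap([0,\infty)\times D_0^{(1)}))=N^\ast\ne N^{\ast,(1)}$, because the transformed negative excursions are counted as well.

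The paper's proof has the same gap. Its short argument only shows that atoms of $N^{(i)}$ land in $D_0^{(i)}$, i.e.\ $N^{\ast,(i)}\le N^\ast(\cdot\cap([0,\infty)\times D_0^{(i)}))$. For the reverse inequality and for the intensity it leans on the same $n^\ast$-partition property asserted after the definition of preservation, so the statement as written needs a slightly stronger hypothesis. The natural fix is one of the following:
\begin{itemize}
\item require the $D_0^{(i)}$ to be genuinely pairwise disjoint (automatic in the state-space partition setting used in Section~\ref{section:example});
\item strengthen preservation to $\nu\bigl(e,D_0^{(i)}\setminus\bigcup_{k\ne i}D_0^{(k)}\bigr)=1$ for $n$-a.e.\ $e\in D_0^{(i)}$.
\end{itemize}
Under either hypothesis your bad-atom count finishes the proof directly: the expected number of atoms of $N^{(i)}$ whose image leaves $D_0^{(i)}\setminus\bigcup_{k\ne i}D_0^{(k)}$ is zero, and no disjointification in the image space is needed.

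Note also that disjointifying is harmless in the \emph{original} coordinate. Restricting the marked Poisson process $\sum\delta_{(s,e,e^\ast)}$ to the $n$-a.e.\ disjoint sets $\{e\in D_0^{(i)}\}$ shows, without any preservation assumption, that the $N^{\ast,(i)}$ are independent Poisson point processes with intensities $ds\otimes\int_{D_0^{(i)}}\nu(e,\cdot)\,n(de)$. What genuinely needs the extra hypothesis is identifying these with restrictions of $N^\ast$ and with $n^\ast(\cdot\cap D_0^{(i)})$.
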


\begin{proof}
By construction, each atom \((s,e)\) of \(N^{(i)}\) is the atom of \(N\) such that \(e \in D_0^{(i)}\).
Since the partition is preserved by the time change, we see that \(e^{\ast} \in D^{(i)}_{0}\) almost surely.
Hence, the transformed point process \(N^{\ast,(i)}\) coincides with the restriction of \(N^\ast\) to \([0,\infty)\times D_0^{(i)}\) almost surely.
\end{proof}

\section{Cumulative excursion lifetimes and occupation times}
 \label{section:cumulativeOccupationTimeBasics}

In this section, we introduce the cumulative excursion lifetimes and occupation times, and present basic properties of them.
We first investigate the Laplace exponents of the cumulative excursion lifetimes and their regular variation.
We then relate these processes to the occupation times of \(X^\ast\) in the specific timescale, with special attention to the occupation time at \(o\).
Throughout this section, we assume that the measurable partition \((D_0^{(i)})_{i\in I}\) is preserved by the time change.

\subsection{Cumulative excursion lifetimes}

Define the \textit{cumulative excursion lifetimes} of \(X^{\ast}\) by
\[
\Gamma_s:=\int_{[0,s]\times D_0}\zeta(e) N^\ast(du\,de),
\quad
\Gamma^{(i)}_s:=\int_{[0,s]\times D_0}\zeta(e) N^{\ast,(i)}(du\,de)
\quad (s\ge0,\ i\in I),
\]
and denote their Laplace exponents by \(\Psi^\ast\) and \(\Psi^{\ast,(i)}\), respectively.
Likewise, we denote by \(\Psi^{(i)}\) the Laplace exponent of the cumulative excursion lifetime associated with \(N^{(i)}\), namely
\[
\Psi^{(i)}(q)
=
\int_0^\infty (1-e^{-qt})\,n^{(i)}(\zeta\in dt)
\quad (q\ge0).
\]

At the level of cumulative excursion lifetimes, the marking procedure acts in the same way as Bochner's subordination.

\begin{Prop} \label{prop:LEofTimeChange}
  Let \(i \in I\) and \(f(i) = j\).
  Then
  \[
  \Psi^{\ast,(i)}(q) = \Psi^{(i)}(\psi^{(j)}(q)) \quad (q \geq 0),
  \]
  where \(\psi^{(j)}\) is the Laplace exponent of \(Z^{(j)}\) defined in \eqref{Eq06}.
\end{Prop}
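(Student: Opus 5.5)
By the preceding proposition, \(N^{\ast,(i)}\) is a Poisson point process with intensity \(ds\otimes n^{\ast,(i)}\), where \(n^{\ast,(i)}=n^\ast(\cdot\cap D_0^{(i)})\). Hence \(\Gamma^{(i)}\) is a driftless subordinator, and
\[
\Psi^{\ast,(i)}(q)=\int_{D_0}\bigl(1-e^{-q\zeta(e')}\bigr)\,n^{\ast,(i)}(de').
\]
The plan is to compute this integral from the definition \(n^\ast(de')=\int\nu(e,de')\,n(de)\). The key input is the lifetime of a transformed excursion: for \(e\) with \(\zeta(e)<\infty\), the path \(\Theta(e,M^{(j)}_{0,\zeta(e)})\) has lifetime \(Z^{(j)}(\zeta(e))\). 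Indeed, set \(m=M^{(j)}_{0,\zeta(e)}\). Since \(Z^{(j)}\) is strictly increasing, \(m^{-1}(t)<\zeta(e)\) for \(t<m(\zeta(e))=Z^{(j)}(\zeta(e))\). Moreover, \(m\) is constant after \(\zeta(e)\), so \(m^{-1}(t)=\infty\) for \(t\ge Z^{(j)}(\zeta(e))\), where \(e_\infty=o\). Because \(e\) does not visit \(o\) on \((0,\zeta(e))\), the first return time is exactly \(Z^{(j)}(\zeta(e))\). This identity was already used in the remark before Theorem \ref{Thm:transformed-excursion-PPP}, in the form \(\zeta(e^\ast)=T(\eta_s)-T(\eta_{s-})\). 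If \(\zeta(e)=\infty\), the transformed lifetime is also infinite, consistent with the convention \(e^{-q\cdot\infty}=0\).

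The next step restricts to \(D_0^{(i)}\). By partition preservation, \(\nu(e,D_0^{(i)})=1\) for \(n\)-a.e.\ \(e\in D_0^{(i)}\). For \(e\in D_0^{(k)}\) with \(k\neq i\), we have \(\nu(e,D_0^{(k)})=1\), and \(n^\ast(D_0^{(i)}\cap D_0^{(k)})=0\). Together these give \(\int\nu(e,D_0^{(i)}\cap\cdot)\,n(de)=\int_{D_0^{(i)}}\nu(e,\cdot)\,n(de)\) as measures; the null overlaps are handled by the partition property \eqref{Eq03} for \(n\) and \(n^\ast\). Therefore
\[
\Psi^{\ast,(i)}(q)=\int_{D_0^{(i)}} \bE\Bigl[1-e^{-qZ^{(j)}(\zeta(e))}\Bigr]\,n(de)
=\int_{D_0^{(i)}}\bigl(1-e^{-\zeta(e)\psi^{(j)}(q)}\bigr)\,n(de).
\]
Here \(\bE[e^{-qZ^{(j)}_x}]=e^{-x\psi^{(j)}(q)}\) for deterministic \(x\), with value \(0\) at \(x=\infty\) when \(q>0\). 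Pushing forward by \(\zeta\) turns the last integral into \(\int_0^\infty(1-e^{-\psi^{(j)}(q)x})\,n^{(i)}(\zeta\in dx)=\Psi^{(i)}(\psi^{(j)}(q))\).

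Everything here is nonnegative, so Tonelli's theorem justifies the interchanges. As a cross-check, the argument matches the Bochner computation in Proposition \ref{prop:TetaIsSubordinator}. One could also run that argument directly: \(\Gamma^{(i)}_s=Z^{(j)}\)-increments summed over excursions in class \(i\), which is equal in law to \(Z(\text{cumulative lifetime})\).

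The main obstacle I expect is the lifetime identification \(\zeta(\Theta(e,m))=Z^{(j)}(\zeta(e))\). It requires care with the right-continuous inverse at times in the range gaps of \(Z^{(j)}\), and with whether \(e_{\zeta(e)-}\) could equal \(o\) so that the transformed path hits \(o\) earlier. The latter is excluded because \(\Theta(e,m)_t=e_{m^{-1}(t)}\) only evaluates \(e\) at times strictly below \(\zeta(e)\) before \(Z^{(j)}(\zeta(e))\). The infinite-lifetime case and the measurability of \(\zeta\circ\Theta\) are routine.
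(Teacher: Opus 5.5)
Your proposal is correct and follows essentially the same route as the paper. Both arguments write \(n^{\ast,(i)}=\int_{D_0^{(i)}}\nu(e,\cdot)\,n(de)\) using the definition of \(n^\ast\) and partition preservation, identify the transformed lifetime as \(Z^{(j)}(\zeta(e))\), and apply \(\bE[e^{-qZ^{(j)}_x}]=e^{-x\psi^{(j)}(q)}\); your version spells out steps the paper compresses into one display, though one claim needs tightening: pathwise the transformed lifetime is \(Z^{(j)}(\zeta(e)-)\), since a jump of \(Z^{(j)}\) at \(\zeta(e)\) gives \(m^{-1}(t)=\zeta(e)\) on \([Z^{(j)}(\zeta(e)-),Z^{(j)}(\zeta(e)))\), contrary to your assertion \(m^{-1}(t)<\zeta(e)\), but a subordinator has no fixed jump times, so this equals \(Z^{(j)}(\zeta(e))\) almost surely, which is all the law computation needs.
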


\begin{proof}
  From the definition of \(n^{\ast}\) and the preservation of the partition, we have 
  \begin{align}
    \Psi^{\ast,(i)}(q) &= \int_{D_{0}}n^{(i)}(de)\int_{0}^{\infty}(1 - e^{-qt})P(Z^{(j)}_{\zeta(e)} \in dt) \\
    &= \int_{D_{0}}(1 - e^{-\psi^{(j)}(q)t})n^{(i)}(\zeta \in dt) \\
    &= \Psi^{(i)}(\psi^{(j)}(q)).
  \end{align}
  The proof is complete.
\end{proof}

From this identity for the Laplace exponent \(\Psi^{\ast,(i)}\), we have the following index transform under the regular variation assumptions,
which is useful when considering examples.
Here and hereafter, we adopt the convention in asymptotic notation in \cite[p.xix]{Regularvariation}: we interpret \(f(x) \sim cg(x)\) for \(c = 0\) and positive \(g\) that \(f(x)/g(x) \to 0\).

\begin{Prop}
  Let \(i \in I\) and \(f(i) = j\).
  Suppose
  \[
  \Psi^{(i)}(q) \sim c q^{\alpha}K(q) \quad \text{and} \quad 
  \psi^{(j)}(q) \sim c' q^{\beta}H(q) \quad (q \to 0)
  \]
  for \(c \geq 0\), \(c' > 0\), \(\alpha,\beta \in [0,1]\), and slowly varying functions \(K,H\) at \(0\).
  Then we have
  \begin{align}    
  \Psi^{\ast,(i)}(q) \sim C q^{\alpha \beta}G(q) \quad (q \to 0), \label{Eq07}
  \end{align}
  where 
  \[
  C := c(c')^{\alpha} \quad \text{and} \quad G(q) := H(q)^{\alpha}K(q^{\beta}H(q)).
  \]
  In particular, \(G\) is slowly varying at \(0\), and whenever \(c > 0\), the function \(\Psi^{\ast,(i)}\) is regularly varying at \(0\) with exponent \(\alpha\beta\).
\end{Prop}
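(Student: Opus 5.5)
The proof starts from Proposition \ref{prop:LEofTimeChange}, which gives \(\Psi^{\ast,(i)}(q)=\Psi^{(i)}(\psi^{(j)}(q))\); the asymptotics then come from substituting one expansion into the other. Since \(\psi^{(j)}\) is a Laplace exponent of a non-zero subordinator, it is continuous, increasing, and satisfies \(\psi^{(j)}(q)\to 0\) as \(q\to0\). Hence \(x:=\psi^{(j)}(q)\to 0\), and the hypothesis on \(\Psi^{(i)}\) gives
\[
\Psi^{\ast,(i)}(q)=\Psi^{(i)}(x)= c\,x^{\alpha}K(x)\,(1+o(1))
\]
when \(c>0\), and \(\Psi^{\ast,(i)}(q)=o(x^\alpha K(x))\) when \(c=0\). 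We then write \(x=c'q^\beta H(q)(1+o(1))\). The factor \(x^\alpha\) is harmless: \(x^\alpha=(c')^\alpha q^{\alpha\beta}H(q)^\alpha(1+o(1))\) because \(t\mapsto t^\alpha\) is continuous at \(1\).

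The main step is to replace \(K(x)\) by \(K(q^\beta H(q))\). Write \(x=y\,\lambda(q)\), where \(y:=q^\beta H(q)\) and \(\lambda(q):=c'(1+o(1))\to c'\in(0,\infty)\). By the uniform convergence theorem for slowly varying functions \cite{Regularvariation}, \(K(\lambda y)/K(y)\to1\) uniformly for \(\lambda\) in compact subsets of \((0,\infty)\) as \(y\to0\). This applies provided \(y\to0\), which holds because \(y\sim x/c'\to0\). So \(K(x)\sim K(q^\beta H(q))\), and combining the factors gives \eqref{Eq07} with \(C=c(c')^\alpha\) and \(G(q)=H(q)^\alpha K(q^\beta H(q))\). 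The case \(c=0\) is handled by the same substitution with \(o(\cdot)\) in place of \(\sim\).

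It remains to show that \(G\) is slowly varying at \(0\). The factor \(H^\alpha\) is clearly slowly varying, so it suffices to treat \(K(q^\beta H(q))\). For fixed \(\lambda>0\),
\[
\frac{(\lambda q)^\beta H(\lambda q)}{q^\beta H(q)}\to\lambda^\beta .
\]
If \(\beta>0\), the uniform convergence theorem again gives \(K((\lambda q)^\beta H(\lambda q))/K(q^\beta H(q))\to1\), because the ratio of arguments stays in a compact subset of \((0,\infty)\) and \(q^\beta H(q)\to0\). If \(\beta=0\), the ratio of arguments tends to \(1\), so the same conclusion holds. In either case the composition \(K\circ(q^\beta H)\) is slowly varying at \(0\), and so is \(G\). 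The statement about regular variation for \(c>0\) then follows, since \(\Psi^{\ast,(i)}(q)\sim Cq^{\alpha\beta}G(q)\) with \(C>0\).

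The only point requiring care is to confirm that the argument \(y=q^\beta H(q)\) actually tends to \(0\), so that the asymptotic behaviour of \(K\) at \(0\) may be used. This is automatic here because \(y\sim\psi^{(j)}(q)/c'\) and \(\psi^{(j)}(q)\to0\). In particular, when \(\beta=0\) the slowly varying function \(H\) itself must tend to \(0\).
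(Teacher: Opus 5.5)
Your proof is correct and takes the same route as the paper: use Proposition~\ref{prop:LEofTimeChange} to write $\Psi^{\ast,(i)}=\Psi^{(i)}\circ\psi^{(j)}$, substitute $\psi^{(j)}(q)\sim c'q^{\beta}H(q)\to0$ into the asymptotics of $\Psi^{(i)}$, and then handle the slowly varying factor. The only difference is that the paper does this in one line and cites \cite[Proposition 1.5.7(ii)]{Regularvariation} for the slow variation of $G$. You instead justify both the replacement $K(\psi^{(j)}(q))\sim K(q^{\beta}H(q))$ and the slow variation of $G$ directly from the uniform convergence theorem, including the check (left implicit in the paper) that $q^{\beta}H(q)\to0$ also when $\beta=0$.
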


\begin{proof}
By Proposition \ref{prop:LEofTimeChange},
\[
\Psi^{\ast,(i)}(q)=\Psi^{(i)}(\psi^{(j)}(q)).
\]
Hence,
\[
\Psi^{\ast,(i)}(q)
\sim
c (\psi^{(j)}(q))^\alpha K(\psi^{(j)}(q))
\sim
c(c')^\alpha q^{\alpha\beta}G(q).
\]
The slow variation of \(G\) follows from \cite[Proposition 1.5.7(ii)]{Regularvariation}.
\end{proof}

To prepare for the scaling limits considered later, we recall the following classical criterion for convergence of subordinators to a stable subordinator.

\begin{Prop}\label{prop:stableConvergence}
Let \(Y\) be a subordinator with L\'evy measure \(\nu\) and Laplace exponent \(\psi\).
Let \(\alpha\in(0,1)\), let \(c\ge0\), and let \(K\) be a slowly varying function at \(\infty\).
Define
\[
h(x):=\frac{x^\alpha}{\Gamma(1-\alpha)K(x)}
\quad (x>0),
\]
and let \(g\) be the right-continuous inverse of \(h\), that is,
\[
g(\lambda):=\inf\{y\ge0 \mid h(y)>\lambda\}.
\]
Let \(Z^{(\alpha)}\) be the \(\alpha\)-stable subordinator characterized by
\[
E[e^{-qZ^{(\alpha)}_t}]=e^{-tq^\alpha}
\quad (q,t\ge0).
\]
Then the following are equivalent:
\begin{enumerate}
  \item \(\left(\frac{Y_{\lambda t}}{g(\lambda)}\right)_{t\ge0}
  \xrightarrow[\lambda\to\infty]{d}
  \bigl(c^{1/\alpha}Z^{(\alpha)}_t\bigr)_{t\ge0}\) on \(D([0,\infty),\bR)\).
  \item
  \(\nu(x,\infty)\sim c x^{-\alpha}K(x) \quad (x\to\infty)\).
  \item
  \(\psi(q)\sim c \Gamma(1-\alpha) q^\alpha K(1/q) \quad (q \to 0)\).
\end{enumerate}
\end{Prop}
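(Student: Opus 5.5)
The plan is to prove the cycle (ii)\(\Leftrightarrow\)(iii)\(\Rightarrow\)(i)\(\Rightarrow\)(iii), treating \(c=0\) separately where needed.

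\emph{(ii)\(\Leftrightarrow\)(iii).} Integrating by parts in the Lévy--Khintchine formula gives
\[
\psi(q)/q = a + \int_0^\infty e^{-qx}\,\bar\nu(x)\,dx,
\quad \bar\nu(x):=\nu(x,\infty).
\]
Since \(\alpha<1\), the drift contributes \(aq = o(q^\alpha K(1/q))\) when \(c>0\). Karamata's Tauberian theorem, together with the monotone density theorem (applicable because \(\bar\nu\) is monotone), then gives
\[
\bar\nu(x)\sim c\,x^{-\alpha}K(x) \iff \int_0^\infty e^{-qx}\bar\nu(x)\,dx \sim c\,\Gamma(1-\alpha)\,q^{\alpha-1}K(1/q),
\]
using \(\Gamma(1-\alpha)\) from the Laplace transform of \(x^{-\alpha}\) (\cite[Thm 1.7.1, 1.7.2]{Regularvariation}). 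For \(c=0\) I would use the little-\(o\) version of the same Tauberian argument, following the convention in the paper.

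\emph{(iii)\(\Leftrightarrow\)(i).} For Lévy processes, convergence of finite-dimensional distributions of \(Y_{\lambda\cdot}/g(\lambda)\) is equivalent to one-dimensional convergence at \(t=1\). Skorokhod \(J_1\)-convergence then follows automatically, for instance from \cite[Theorem 16.14]{Kallenberg-third}, since the limit is again a Lévy process. The one-dimensional convergence is governed by the Laplace transform:
\[
E\bigl[e^{-qY_{\lambda t}/g(\lambda)}\bigr]=\exp\bigl(-\lambda t\,\psi(q/g(\lambda))\bigr).
\]
So (i) is equivalent to \(\lambda\psi(q/g(\lambda))\to cq^\alpha\) for all \(q>0\). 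Now \(g\) is an asymptotic inverse of \(h\), so \(h(g(\lambda))\sim\lambda\), i.e.
\[
\lambda \sim \frac{g(\lambda)^\alpha}{\Gamma(1-\alpha)K(g(\lambda))}
\]
(\cite[Thm 1.5.12]{Regularvariation}). Hence under (iii),
\[
\lambda\psi(q/g(\lambda)) \sim \lambda\, c\,\Gamma(1-\alpha)\, q^\alpha g(\lambda)^{-\alpha}K(g(\lambda)/q)\to cq^\alpha,
\]
by slow variation of \(K\). Conversely, (i) gives \(\psi(q/g(\lambda))\sim cq^\alpha/\lambda\) along \(\lambda\to\infty\). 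Since \(g\) is continuous-ish and regularly varying with index \(1/\alpha\), the values \(q/g(\lambda)\) sweep a neighbourhood of \(0\). Monotonicity of \(\psi\) and a sandwich argument (\(g(\lambda)\le x< g(\lambda')\) with \(\lambda'/\lambda\to1\)) then yield (iii).

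\emph{Main obstacle.} The delicate points are the converse direction (i)\(\Rightarrow\)(iii), which needs this interpolation along the possibly jumping \(g\), and the degenerate case \(c=0\). For \(c=0\) the limit is \(0\), and the drift \(a\) must also be shown to be negligible, which holds because \(aq/(q^\alpha K(1/q))\to0\). I would handle the interpolation using the uniform convergence theorem for regularly varying functions.
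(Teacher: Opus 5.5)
Your proposal is correct and takes the same route as the paper's sketch. You reduce \(J_1\)-convergence of the L\'evy processes \(Y_{\lambda\cdot}/g(\lambda)\) to one-dimensional convergence, get (i)\(\Leftrightarrow\)(iii) from \(E[e^{-qY_{\lambda t}/g(\lambda)}]=\exp(-\lambda t\,\psi(q/g(\lambda)))\) together with \(h(g(\lambda))\sim\lambda\), and get (ii)\(\Leftrightarrow\)(iii) from the integrated-tail form of the L\'evy--Khintchine formula and Karamata's Tauberian theorem. The details you add are the steps the paper leaves implicit: the monotone density theorem for the Tauberian direction, negligibility of the drift since \(\alpha<1\), the \(c=0\) convention, and the monotonicity sandwich for (i)\(\Rightarrow\)(iii).
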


\begin{proof}
  We only give a sketch.
  Since \(Y\) is a L\'evy process, the convergence on \(D([0,\infty),\bR)\) is equivalent to the convergence of one-dimensional marginal distributions.
  Then the equivalence between (i) and (iii) follows by computing the Laplace transform.
  The equivalence between (ii) and (iii) follows from the L\'evy-Khintchine representation of the Laplace exponent and Karamata's Tauberian theorem (see e.g., \cite[Theorem 1.7.1]{Regularvariation}).
\end{proof}

\subsection{Occupation times}

We next define the occupation times of \(X^\ast\) in terms of its excursions.
For \(i\in I\), the occupation time of the class \(D_0^{(i)}\) counts the amount of time spent in excursions belonging to \(D_0^{(i)}\).
Up to a fixed time \(t\), this consists of the total lifetimes of the excursions already completed before \(t\), together with the elapsed time of the excursion in progress at time \(t\), if any.

The completed excursions of \(X^{\ast}\) before time \(t\) corresponds to atoms in \([0,L(A_t))\times D_0\).
Their contribution to the \(i\)-th occupation time is
\[
\int_{[0,L(A_t))\times D_0}\zeta(e) N^{\ast,(i)}(ds\,de).
\]
If \(X^\ast\) is away from \(o\) at time \(t\), then \(X_{A_t}\) is away from \(o\), and there is a corresponding atom \((L(A_t),e)\in \Supp N^{\ast}\).
By Proposition \ref{prop:excursionIntervalCorrespondence}, its excursion interval is obtained from the corresponding excursion interval of \(X\) by the time change, and hence its departure time is \(T_{\eta_{L(A_t)-}}\) almost surely; see also Remark \ref{rem:EndpointOfExcursionInterval}.
Thus, the elapsed time of the current excursion is \(t-T_{\eta_{L(A_t)-}}\).
We therefore define
\begin{align}
  \begin{aligned}
    O^{\ast,(i)}(t) :=&
    \int_{[0,L(A_t))\times D_0}\zeta(e) N^{\ast,(i)}(ds\,de) \\
    &\quad+
    \left(t-T_{\eta_{L(A_t)-}}\right)
    1\{N^{\ast,(i)}(\{L(A_t)\}\times D_0)=1\}
  \end{aligned}
  \quad (i\in I,\ t\ge0).
  \label{Eq13}
\end{align}
The occupation time at \(o\) is defined by
\[
O^{\ast,(0)}(t):=\int_{0}^{t} 1\{X^{\ast}_{s} = o\} ds
\quad (t\ge0).
\]

A useful simplification occurs in the timescale of \(T(\eta_t)\).
For \(i\in I\), the definition immediately gives
\begin{align}
O^{\ast,(i)}(T(\eta_t))=\Gamma^{(i)}(t)
\quad (t\ge0).
\label{Eq11}
\end{align}
Thus, along the clock \(T(\eta_t)\), the occupation time of the \(i\)-th class coincides with the cumulative excursion lifetime process of that class.
Summing over \(i\in I\), we also have
\[
\sum_{i\in I}O^{\ast,(i)}(T(\eta_t))=\Gamma(t).
\]

It remains to identify \(O^{\ast,(0)}(T(\eta_t))\).
The following proposition reveals that it behaves as a subordinator determined solely by the stagnancy rate, independent of the excursion lifetimes \((\Gamma^{(i)})_{i\in I}\).

\begin{Prop}\label{prop:Gamma0LawAndIndependence}
Define
\[
\Gamma^{(0)}_t:=O^{\ast,(0)}(T(\eta_t))
\quad (t\ge0).
\]
Then
\begin{align}  
\Gamma^{(0)}_t=T(\eta_t)-\Gamma(t)
\quad (t\ge0). \label{Eq14}
\end{align}
Moreover, if \(j_0:=f(0)\), the process \((\Gamma^{(0)}_t)_{t\ge0}\) has the same law on \(D([0,\infty),\bR)\) as \((Z^{(j_0)}(ct))_{t\ge0}\).
In particular, \((\Gamma^{(0)}_t)_{t\ge0}\) is a subordinator.

Furthermore, the process \((\Gamma^{(0)}_t)_{t\ge0}\) is independent of the family \((\Gamma^{(i)})_{i\in I}\).
\end{Prop}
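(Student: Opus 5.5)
Three things need proving: the identity \eqref{Eq14}, the identification of the law of \(\Gamma^{(0)}\), and the independence from the excursion lifetimes. The natural route is to split the clock \(T(\eta_t)\) into a part run during time at \(o\) and parts run during excursions, and then use independence of increments of the \(Z^{(j)}\) over disjoint intervals.

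\emph{Step 1: the identity \eqref{Eq14}.} The interval \([0,T(\eta_t)]\) splits, up to a Lebesgue-null set, into \(\{u: X^\ast_u=o\}\) and the transformed excursion intervals of the atoms \((s,e^\ast)\) with \(s\le t\). By Proposition \ref{prop:excursionIntervalCorrespondence} and Remark \ref{rem:EndpointOfExcursionInterval}, these transformed intervals are \([T(\eta_{s-}),T(\eta_s))\), of length \(\zeta(e^\ast)\). They are disjoint and contained in \([0,T(\eta_t)]\). Summing their lengths gives \(\Gamma(t)\), so \(O^{\ast,(0)}(T(\eta_t))=T(\eta_t)-\Gamma(t)\). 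One also needs that excursions straddling \(T(\eta_t)\) contribute nothing extra; this holds because \(\eta_t\) is not strictly inside an excursion interval.

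\emph{Step 2: decomposing the clock.} Write \(T(\eta_t)=\sum_j Z^{(j)}(B^{(j)}_t)\) as in the proof of Proposition \ref{prop:TetaIsSubordinator}. Here \(B^{(j)}\) is the sum of a drift \(ct\,1_{\{f(0)=j\}}\) and the jumps \(\zeta(e)\), \(e\in D_0^{(i)}\) with \(f(i)=j\). Conditionally on \(X\), hence on \(N\), the increment of \(Z^{(j)}\) over each jump interval of \(B^{(j)}\) is exactly \(\zeta(e^\ast)\). So
\[
Z^{(j)}(B^{(j)}_t)=\sum_{s\le t,\,\text{class}\to j}\zeta(e^\ast_s)+R^{(j)}_t,
\]
where \(R^{(j)}_t\) is the sum of \(Z^{(j)}\)-increments over the complement of the jump intervals in \([0,B^{(j)}_t]\). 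That complement has Lebesgue measure \(ct\,1_{\{f(0)=j\}}\). Hence \(\Gamma^{(0)}_t=R^{(j_0)}_t\), since the other \(R^{(j)}\) vanish (their drift part is zero, and a strictly increasing \(Z^{(j)}\) still has no mass over a Lebesgue-null set of times, by independence and absence of fixed jumps).

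\emph{Step 3: law and independence.} This is the main step. Condition on \(N\); the set \(\{B^{(j_0)}_r : r\ge0\}\) closure minus the jump intervals is then deterministic. Its "time-at-\(o\)" part is parametrized by \(r\mapsto\) (the left endpoint \(B^{(j_0)}_{r-}\) minus earlier jumps), of total length \(cr\). By stationarity and independence of increments of \(Z^{(j_0)}\), concatenating the increments of \(Z^{(j_0)}\) over this set gives a process \(r\mapsto R^{(j_0)}_r\) distributed as \(Z^{(j_0)}(cr)\). The increments over the jump intervals are independent of it and have conditional laws \(\nu(e,\cdot)\). Rigorously, I would approximate by the excursions with \(\zeta>h\), which are finitely many on compacts, and use independence of increments on finitely many disjoint intervals. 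Then I would pass \(h\downarrow0\) using monotone convergence of the finite-dimensional Laplace transforms. Since the conditional law of \(R^{(j_0)}\) given \(N\) does not depend on \(N\), and conditionally \(R^{(j_0)}\) is independent of \((\zeta(e^\ast))\), we get the claims. Indeed, \(\Gamma^{(0)}\) is independent of \(N\) together with all the marks, while \((\Gamma^{(i)})_{i\in I}\) is a functional of \(N\) and those marks, with the other \(Z^{(j)}\), \(j\neq j_0\), independent anyway. Equality of finite-dimensional distributions plus càdlàg paths gives equality of laws on \(D\).
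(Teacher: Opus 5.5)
Your proposal is correct and is essentially the paper's argument. You obtain \eqref{Eq14} by splitting $[0,T(\eta_t)]$ into time at $o$ and the transformed excursion intervals. You then write $\Gamma^{(0)}_t=\mu_{Z^{(j_0)}}(C_t)$, where $C_t=(0,B^{(j_0)}_t]\setminus E^{(j_0)}_t$ is the $N$-measurable complement of the jump intervals $E^{(j_0)}_t$ of $B^{(j_0)}$, with $\lambda(C_t)=ct$. Finally you condition on $N$ and use independence of $Z^{(j_0)}$-increments over disjoint sets. Your truncation to excursions with $\zeta>h$ just replaces the approximation step inside the paper's Lemma \ref{lem:subordinatorRandomMeasure}.

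One justification is imprecise: the claim $R^{(j)}=0$ for $j\neq j_0$. Absence of fixed jumps only controls countable sets, but $(0,B^{(j)}_t]\setminus E^{(j)}_t$ is typically an uncountable Lebesgue-null set. You should instead use $E[e^{-q\mu_{Z^{(j)}}(A)}\mid N]=e^{-\lambda(A)\psi^{(j)}(q)}=1$ when $\lambda(A)=0$, which is your own truncation argument with $c=0$. The paper itself passes over this point by writing $(0,B^{(j)}_t]=E^{(j)}_t$.
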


For the proof, we use the following general lemma on subordinators.

\begin{Lem}\label{lem:subordinatorRandomMeasure}
Let \(Z\) be a subordinator with Laplace exponent \(\psi\), and let \(\mu_Z\) be the random measure on \([0,\infty)\) induced by \(Z\), namely
\[
\mu_Z((a,b]) := Z_b-Z_a
\quad (0\le a<b<\infty).
\]
Let \(A\subset [0,\infty)\) be a Borel set with \(\lambda(A)<\infty\), where \(\lambda\) denotes the Lebesgue measure.
Then
\begin{align}
  E[e^{-q\mu_Z(A)}]=e^{-\lambda(A)\psi(q)}
\quad (q\ge0). \label{Eq08}
\end{align}
In particular,
\[
\mu_Z(A)\overset{d}{=} Z_{\lambda(A)},
\]
and if \(A,B \subset [0,\infty)\) are disjoint, \(\mu_{Z}(A)\) and \(\mu_{Z}(B)\) are independent.
More generally, if \(A\) is a random Borel set independent of \(Z\), then
\[
E[e^{-q\mu_Z(A)}\mid A]=e^{-\lambda(A)\psi(q)}
\quad (q\ge0),
\]
and hence, if \(\lambda(A)=a\) almost surely for some \(a\ge0\), then
\[
\mu_Z(A)\overset{d}{=} Z_{a}.
\]
\end{Lem}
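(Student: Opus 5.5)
The plan is to prove \eqref{Eq08} first for finite unions of intervals and then extend to general Borel sets by a monotone class argument. Throughout, \(\mu_Z\) is the Lebesgue--Stieltjes measure of the non-decreasing c\`adl\`ag path \(Z\). It is a random \(\sigma\)-finite measure on \([0,\infty)\), and we assign no mass at \(0\) since \(Z_0=0\).

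\textbf{Step 1: unions of intervals.} Take \(A=\bigsqcup_{k=1}^m (a_k,b_k]\) with disjoint intervals. The increments \(Z_{b_k}-Z_{a_k}\) are independent, and \(Z_{b_k}-Z_{a_k}\overset{d}{=}Z_{b_k-a_k}\). Hence
\[
E\bigl[e^{-q\mu_Z(A)}\bigr]=\prod_{k=1}^m e^{-(b_k-a_k)\psi(q)}=e^{-\lambda(A)\psi(q)}.
\]
The same independence of increments also gives
\[
E\bigl[e^{-q\mu_Z(A)-r\mu_Z(B)}\bigr]=e^{-\lambda(A)\psi(q)-\lambda(B)\psi(r)}
\]
for disjoint finite unions \(A,B\) of intervals.

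\textbf{Step 2: general Borel sets.} Fix \(R>0\) and let \(\mathcal{D}\) be the class of Borel \(A\subset[0,R]\) for which \eqref{Eq08} holds for all \(q\ge0\).

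The half-open intervals form a \(\pi\)-system generating the Borel sets, and by Step 1 the finite unions of such intervals lie in \(\mathcal{D}\). It remains to check that \(\mathcal{D}\) is a \(\lambda\)-system.

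For increasing unions, if \(A_n\uparrow A\) then \(\mu_Z(A_n)\uparrow\mu_Z(A)\) and \(\lambda(A_n)\to\lambda(A)\), and bounded convergence passes \eqref{Eq08} to the limit.

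Proper differences are the main obstacle. Knowing the Laplace transforms of \(\mu_Z(A)\) and \(\mu_Z(B)\) for \(B\subset A\) does not by itself give the Laplace transform of \(\mu_Z(A\setminus B)\); one needs the joint law. To handle this, enlarge the claim: let \(\mathcal{D}\) consist of sets \(B\) such that for every finite union of intervals \(C\) and all \(q,r\ge0\),
\[
E\bigl[e^{-q\mu_Z(B\cap C)-r\mu_Z(C\setminus B)}\bigr]=e^{-\lambda(B\cap C)\psi(q)-\lambda(C\setminus B)\psi(r)}.
\]
This class is closed under complements within \([0,R]\), since the roles of \(q\) and \(r\) are simply swapped. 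It is also closed under increasing limits, by monotone convergence in both coordinates together with bounded convergence. It contains the interval algebra by Step 1. By Dynkin's theorem it therefore contains all Borel subsets of \([0,R]\). Letting \(R\to\infty\) with monotone convergence extends the result to all \(A\) with \(\lambda(A)<\infty\).

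Taking \(C=A\cup B\) in this joint identity gives the independence of \(\mu_Z(A)\) and \(\mu_Z(B)\) for disjoint \(A,B\): the joint Laplace transform factorizes. Setting \(r=0\) gives \eqref{Eq08}, and uniqueness of Laplace transforms yields \(\mu_Z(A)\overset{d}{=}Z_{\lambda(A)}\).

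\textbf{Step 3: random sets.} For a random Borel set \(A\) independent of \(Z\), condition on \(A\) (more precisely, take \(A\) to be measurable as a random element in a suitable sense, e.g.\ \(1_A(t,\omega)\) jointly measurable). First check that \(\omega\mapsto\mu_Z(A)\) is measurable. Then apply the freezing (substitution) lemma for independent variables to the deterministic identity \(E[e^{-q\mu_Z(a)}]=e^{-\lambda(a)\psi(q)}\). This gives the conditional identity
\[
E\bigl[e^{-q\mu_Z(A)}\mid A\bigr]=e^{-\lambda(A)\psi(q)}.
\]
If \(\lambda(A)=a\) almost surely, taking expectations gives \(E[e^{-q\mu_Z(A)}]=e^{-a\psi(q)}\), and hence \(\mu_Z(A)\overset{d}{=}Z_a\).
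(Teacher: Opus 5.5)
Your route differs from the paper's. The paper only sketches an approximation chain (finite unions of half-open intervals, then countable disjoint unions, open sets, and finally Borel sets) for the one-set Laplace identity. It treats independence and the random-set case as immediate. You instead run a monotone-class argument on a joint Laplace transform. This avoids the open-to-Borel passage, where one needs outer regularity and the fact that \(\mu_Z\) a.s.\ charges no fixed Lebesgue-null set. It is also set up to deliver the joint law, hence independence. Your Step 1, the \(R\to\infty\) step, and the freezing-lemma treatment of random \(A\) are fine; the last is what the paper's ``conditioning on \(A\)'' amounts to.

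A technical correction first. You show the class in Step 2 contains the interval algebra and is closed under complements and increasing unions. That does not make it a Dynkin system, since closure under disjoint unions or proper differences is not checked. So cite the monotone class theorem instead. Decreasing limits follow from complements, or directly since \(\mu_Z([0,R])=Z_R<\infty\), and a monotone class containing an algebra contains the generated \(\sigma\)-algebra. Also, the complement swap \(q\leftrightarrow r\) only works if you restrict \(C\) to finite unions of half-open subintervals of \((0,R]\).

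The genuine gap is the independence claim. Your joint identity is established for Borel \(B\) but only for \(C\) in the interval algebra. So \(C=A\cup B\) with \(A,B\) disjoint Borel sets is not admissible. The law of \((\mu_Z(B),\mu_Z([0,R]\setminus B))\) does not determine that of \((\mu_Z(A),\mu_Z(B))\) for an arbitrary Borel \(A\subset[0,R]\setminus B\).

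You need a second pass. Fix a Borel \(B\subset[0,R]\), and let \(\mathcal{D}_B\) be the class of Borel \(C\subset[0,R]\) for which the joint identity holds. By your first pass, \(\mathcal{D}_B\) contains the interval algebra. It is closed under increasing and decreasing limits, by continuity of \(\mu_Z\) and \(\lambda\) from below and above (both are finite on \([0,R]\)) together with bounded convergence. Hence \(\mathcal{D}_B\) contains all Borel subsets of \([0,R]\). Now \(C=A\cup B\) is allowed and gives \(E[e^{-q\mu_Z(B)-r\mu_Z(A)}]=e^{-\lambda(B)\psi(q)}e^{-\lambda(A)\psi(r)}\). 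Unbounded \(A,B\) then follow by letting \(R\to\infty\), since a.s.\ limits of independent pairs are independent.
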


\begin{proof}
When \(A\) is a finite disjoint union of bounded left-open right-closed intervals,
we readily have \eqref{Eq08} since \(Z\) has stationary independent increments.
By a usual approximation argument, \eqref{Eq08} can be then extended to the disjoint countably infinite union of left-open intervals, to open sets, and to Borel sets.

If \(A\) is random and independent of \(Z\), then conditioning on \(A\) gives
\[
E[e^{-q\mu_Z(A)}\mid A]=e^{-\lambda(A)\psi(q)}.
\]
The proof is complete.
\end{proof}

\begin{proof}[Proof of Proposition \ref{prop:Gamma0LawAndIndependence}]
Since
\[
u=O^{\ast,(0)}(u)+\sum_{i\in I}O^{\ast,(i)}(u)
\quad (u\ge0),
\]
we have, by substituting \(u=T(\eta_t)\),
\[
\Gamma^{(0)}_t=T(\eta_t)-\Gamma(t).
\]

We identify the law of \((\Gamma^{(0)}_t)_{t\ge0}\).
For each \(j\in J\), define
\[
\tilde{N}^{(j)}:=\sum_{i:f(i)=j}N^{(i)}.
\]
Set \(j_0:=f(0)\).
Recall that
\[
B_t^{(j)}
=
ct 1_{\{j=j_0\}}
+
\int_{[0,t]\times D_0}\zeta(e)\tilde{N}^{(j)}(ds\,de).
\]
For each atom \((u,e)\in\Supp \tilde{N}^{(j)}\), define
\[
I_u^{(j)}:=(B_{u-}^{(j)},B_u^{(j)}].
\]
Also set
\[
E_t^{(j)}
:=
\bigsqcup_{\substack{(u,e)\in\Supp \tilde{N}^{(j)}\\u\le t}} I_u^{(j)}
\subset (0,B_t^{(j)}].
\]

Then, by construction,
\[
\sum_{i:f(i)=j}\Gamma^{(i)}(t)=\mu_{Z^{(j)}}(E_t^{(j)}),
\]
where \(\mu_{Z^{(j)}}\) denotes the random measure induced by \(Z^{(j)}\).
Hence,
\[
\Gamma(t)=\sum_{j\in J}\mu_{Z^{(j)}}(E_t^{(j)}),
\]
while
\[
T(\eta_t)=\sum_{j\in J}\mu_{Z^{(j)}}((0,B_t^{(j)}]).
\]

Define
\[
C_t:=(0,B_t^{(j_0)}]\setminus E_t^{(j_0)}.
\]
Since \((0,B_t^{(j)}]=E_t^{(j)}\) for every \(j\neq j_0\), it follows that
\[
\Gamma^{(0)}_t
=
T(\eta_t)-\Gamma(t)
=
\mu_{Z^{(j_0)}}(C_t).
\]

We compute the Lebesgue measure of \(C_t\).
Since
\[
B_t^{(j_0)}
=
ct+\sum_{\substack{(u,e)\in\Supp \tilde{N}^{(j_0)}\\u\le t}}\Delta\eta_u,
\]
where \(\Delta\eta_u := \eta_{u} - \eta_{u-}\),
we have
\[
\lambda((0,B_t^{(j_0)}])
=
ct+\sum_{\substack{(u,e)\in\Supp \tilde{N}^{(j_0)}\\u\le t}}\Delta\eta_u.
\]
Moreover, the intervals \(I_u^{(j_0)}\) are disjoint, and
\[
\lambda(I_u^{(j_0)})=B_u^{(j_0)}-B_{u-}^{(j_0)}=\Delta\eta_u.
\]
Hence,
\[
\lambda(E_t^{(j_0)})
=
\sum_{\substack{(u,e)\in\Supp \tilde{N}^{(j_0)}\\u\le t}}\Delta\eta_u,
\]
and therefore
\[
\lambda(C_t)
=
\lambda((0,B_t^{(j_0)}])-\lambda(E_t^{(j_0)})=ct.
\]

Fix \(0\le t_0<t_1<\cdots<t_n\), and define
\[
C_{t_{m-1},t_m}:=C_{t_m}\setminus C_{t_{m-1}}
\quad (m=1,\dots,n).
\]
Then the sets \(C_{t_{m-1},t_m}\) are disjoint, measurable with respect to \(N\), and independent of \(Z^{(j_0)}\).
Moreover,
\[
\lambda(C_{t_{m-1},t_m})=c(t_m-t_{m-1}).
\]
Since
\[
\Gamma^{(0)}_{t_m}-\Gamma^{(0)}_{t_{m-1}}
=
\mu_{Z^{(j_0)}}(C_{t_{m-1},t_m}),
\]
Lemma \ref{lem:subordinatorRandomMeasure} yields
\[
\bigl(
\Gamma^{(0)}_{t_m}-\Gamma^{(0)}_{t_{m-1}}
\bigr)_{m=1}^n
\overset{d}{=}
\bigl(
Z^{(j_0)}(c(t_m-t_{m-1}))
\bigr)_{m=1}^n.
\]
Thus, \((\Gamma^{(0)}_t)_{t\ge0}\) and \((Z^{(j_0)}(ct))_{t\ge0}\) have the same finite-dimensional distributions, hence the same law on \(D([0,\infty),\bR)\).
In particular, \((\Gamma^{(0)}_t)_{t\ge0}\) is a subordinator.

Finally, we show that \((\Gamma^{(0)}_t)_{t\ge0}\) is independent of the family \((\Gamma^{(i)})_{i\in I}\).
Let \(F\) be a bounded measurable functional on \(D([0,\infty),\bR)\), and let \(G\) be a bounded measurable functional of the family \((\Gamma^{(i)})_{i\in I}\).
Then,
\[
E\left[F(\Gamma^{(0)})G((\Gamma^{(i)})_{i\in I})\right]
=
E\left[
E\left[F(\Gamma^{(0)})G((\Gamma^{(i)})_{i\in I})\mid N\right]
\right].
\]
Conditionally on \(N\), the process \(\Gamma^{(0)}\) is obtained from the random measure induced by \(Z^{(j_0)}\) on the sets \(C_t\), while \((\Gamma^{(i)})_{i\in I,f(i)=j_{0}}\) is obtained from the random measures induced by \(Z^{(j_{0})} \) on the sets \(E_t^{(j_{0})}\).
Since these sets are disjoint, and the subordinators \((Z^{(j)})_{j\in J}\) are independent, it follows that \(\Gamma^{(0)}\) and \((\Gamma^{(i)})_{i\in I}\) are conditionally independent given \(N\).
Hence,
\[
E\left[F(\Gamma^{(0)})G((\Gamma^{(i)})_{i\in I})\mid N\right]
=
E[F(\Gamma^{(0)})\mid N] 
E[G((\Gamma^{(i)})_{i\in I})\mid N] \quad \text{a.s.}
\]
Moreover, since the conditional law of \(\Gamma^{(0)}\) given \(N\) depends only on \(\lambda(C_t)=ct\), it is independent of \(N\).
Therefore, from Lemma \ref{lem:subordinatorRandomMeasure} we have
\[
E[F(\Gamma^{(0)})\mid N]=E[F(\Gamma^{(0)})]
\quad \text{a.s.}
\]
Consequently,
\[
\begin{aligned}
E\left[F(\Gamma^{(0)})G((\Gamma^{(i)})_{i\in I})\right]
&=
E\left[
E[F(\Gamma^{(0)})\mid N]\,
E[G((\Gamma^{(i)})_{i\in I})\mid N]
\right] \\
&=
E[F(\Gamma^{(0)})]\,
E\left[E[G((\Gamma^{(i)})_{i\in I})\mid N]\right] \\
&=
E[F(\Gamma^{(0)})]\,
E[G((\Gamma^{(i)})_{i\in I})].
\end{aligned}
\]
This proves the desired independence.
\end{proof}

\begin{Rem}
The independence of \(\Gamma^{(0)}\) and \(\Gamma\) is already suggested by Proposition \ref{prop:TetaIsSubordinator}.
However, the additive decomposition of the Laplace exponents not necessarily imply that the given processes \(\Gamma^{(0)}\) and \(\Gamma\) are independent.
\end{Rem}

\begin{Cor} \label{cor:zeroStagnancyPreservation}
If the stagnancy rate \(c\) at \(o\) of \(X\) is zero, then the process
\[
(O^{\ast,(0)}(t))_{t\ge0}
\]
is identically zero almost surely.
\end{Cor}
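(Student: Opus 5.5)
The plan is to use Proposition \ref{prop:Gamma0LawAndIndependence} to show that \(O^{\ast,(0)}\) vanishes along the clock \(T(\eta_t)\), and then pass to all times \(u\) by monotonicity.

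\emph{Step 1: vanishing along the clock.} With \(c=0\), Proposition \ref{prop:Gamma0LawAndIndependence} says \(\Gamma^{(0)}\) has the same law as \((Z^{(j_0)}(0\cdot t))_{t\ge0}\equiv 0\). Hence \(\Gamma^{(0)}_t = O^{\ast,(0)}(T(\eta_t))=0\) for all rational \(t\) almost surely. By right-continuity of \(\Gamma^{(0)}\) (a subordinator) this holds for all \(t\ge0\) almost surely.

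\emph{Step 2: covering all \(u\).} The process \(O^{\ast,(0)}\) is non-decreasing. It therefore suffices to show that, almost surely, every \(u\ge0\) satisfies \(u\le T(\eta_t)\) for some \(t\). This holds when \(\sup_t T(\eta_t)=\infty\).

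There is one caveat. If \(L_\infty<\infty\), then \(\eta\) jumps to \(\infty\), and \(T(\eta_t)\) is interpreted as \(T(\infty)=\infty\), via a final infinite excursion. In that case we should argue directly for \(u\ge T(\eta_{L_\infty-})\). There \(X^\ast\) is in the final excursion, so \(X^\ast\neq o\), and \(O^{\ast,(0)}\) does not increase.

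If instead \(L_\infty=\infty\), then \(\eta_t\to\infty\). Since \(T\) is strictly increasing and the subordinators are unbounded, \(T(\eta_t)\to\infty\).

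\emph{Main obstacle.} The main subtlety is this boundary and limit behaviour. A cleaner alternative avoids it. Since \(O^{\ast,(0)}(u)=\int_0^u 1\{X_{A_s}=o\}\,ds\), substitute \(s=T_r\) using the continuity of \(A\). This gives \(O^{\ast,(0)}(u)\le \mu_{Z}(\{r: X_r=o\}\cap[0,A_u])\), summed over the subordinators. By Theorem \ref{thm:existenceExcursionMeasure}(v) the set \(\Xi\) has Lebesgue measure \(cL_t=0\). Then Lemma \ref{lem:subordinatorRandomMeasure}, applied conditionally on \(X\) since \(\Xi\) is independent of the \(Z^{(j)}\), gives \(\mu_{Z^{(j_0)}}(\Xi\cap[0,n])=0\) almost surely for each \(n\).

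I would present this second route as the proof, since it handles all \(u\) at once. The only check needed is that the pushforward of Lebesgue measure under \(A\) on \([0,u]\) is exactly the Stieltjes measure \(dT\) restricted to \([0,A_u]\). This holds because \(T\) is strictly increasing and \(A\) is its continuous inverse.
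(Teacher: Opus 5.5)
Your Steps 1--2 are correct and are essentially how the paper obtains this corollary. The paper gives no separate proof: the result is read off from Proposition~\ref{prop:Gamma0LawAndIndependence}, since \(c=0\) gives \(\Gamma^{(0)}\overset{d}{=}(Z^{(j_0)}(0\cdot t))_{t\ge0}\equiv0\). So \(O^{\ast,(0)}\) vanishes along \(T(\eta_t)\), and monotonicity (with your handling of a possible final infinite excursion) covers all times.

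The route you say you would actually present has a gap as written. The substitution \(s=T_r\) gives \(O^{\ast,(0)}(u)\le dT(\Xi\cap[0,A_u])\), where \(dT\) is the Stieltjes measure of \(T\) on the time axis of \(X\). This holds only as an inequality, since the image of Lebesgue measure on \([0,u]\) under \(A\) is \(dT|_{[0,A_u)}+(u-T_{A_u-})\delta_{A_u}\); that part is harmless. The real problem is that \(T=\sum_{j}Z^{(j)}\circ\tilde O^{(j)}\), so \(dT=\sum_j d(Z^{(j)}\circ\tilde O^{(j)})\). This is not \(\sum_j\mu_{Z^{(j)}}\) unless every clock \(\tilde O^{(j)}\) is the identity. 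In the homogeneous case your argument is exactly right. In general each \(Z^{(j)}\) runs on its own internal time \(\tilde O^{(j)}(t)\), so \(\mu_{Z^{(j_0)}}(\Xi\cap[0,n])\) evaluates the subordinator's random measure on a set from the wrong time axis. It is not the quantity that bounds \(O^{\ast,(0)}\). Discarding \(j\neq j_0\) is also unjustified. With \(c=0\) we have \(O^{(0)}\equiv0\), so \(j_0\) plays no special role on \(\Xi\). What must be excluded is that a jump of \emph{any} \(Z^{(j)}\) falls on the image of \(\Xi\) under \(\tilde O^{(j)}\).

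The repair is short. Set \((\tilde O^{(j)})^{\leftarrow}(y):=\inf\{t\ge0 \mid \tilde O^{(j)}(t)\ge y\}\) and \(B_{j,n}:=\{y>0 \mid (\tilde O^{(j)})^{\leftarrow}(y)\in\Xi\cap[0,n]\}\subset\tilde O^{(j)}(\Xi\cap[0,n])\). Then \(d(Z^{(j)}\circ\tilde O^{(j)})(\Xi\cap[0,n])=\mu_{Z^{(j)}}(B_{j,n})\). Since \(\sum_{i\in I_0}O^{(i)}(t)=t\), each \(\tilde O^{(j)}\) is \(1\)-Lipschitz, so \(\lambda(B_{j,n})\le\lambda(\Xi\cap[0,n])=0\). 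Moreover \(B_{j,n}\) is \(\sigma(X)\)-measurable, hence independent of \(Z^{(j)}\), and Lemma~\ref{lem:subordinatorRandomMeasure} applied conditionally gives \(\mu_{Z^{(j)}}(B_{j,n})=0\) a.s. Summing over the countable set \(J\) and letting \(n\to\infty\) yields \(dT(\Xi)=0\) a.s., hence \(O^{\ast,(0)}\equiv0\).

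This is the same mechanism as in the paper's proof of Proposition~\ref{prop:Gamma0LawAndIndependence}, where the transfer to \(Z^{(j_0)}\)'s own time axis is done by the sets \(C_t\) with \(\lambda(C_t)=ct\). Once repaired, your direct route is self-contained and avoids both Proposition~\ref{prop:Gamma0LawAndIndependence} and the case analysis at \(L_\infty\). The paper's route instead gets the corollary for free once the law of \(\Gamma^{(0)}\) has been identified.
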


The following formula, known as Williams formula, is useful since it reduces the occupation time to cumulative excursion lifetimes, which plays a fundamental role in our treatment of occupation time limit theorem in Section \ref{section:occupationTimeLimitTheorem}.

\begin{Prop}[Williams formula]
Let \(i\in I_{0}\).
For \(0\leq t < O^{\ast,(i)}(\infty)\), we have 
\begin{align}  
(O^{\ast,(i)})^{-1}(t) = t + \sum_{k\in I_{0},k\neq i}\Gamma^{(k)}(\Lambda^{(i)}_t), \label{Eq09}
\end{align}
where we denote the right-continuous inverse of \(\Gamma^{(0)}\) by \(\Lambda^{(0)}\).
\end{Prop}

\begin{proof}
  Set \(t' := (O^{\ast,(i)})^{-1}(t)\).
  We first consider the case \(i\neq 0\).
  Let \(0\leq t < O^{\ast,(i)}(\infty)\).
  Since \(X^{\ast}\) is in the excursion of type \(i\) at time \(t'\),
  there exists an excursion \((L_{A_{t'}},e) \in \Supp N\) of \(X\), and the right endpoint of the corresponding excursion interval is \(\eta(L_{A_{t'}})\).
  From Proposition \ref{prop:excursionIntervalCorrespondence} and Remark \ref{rem:EndpointOfExcursionInterval}, it follows that \(t'\) is in the excursion interval of \(X^{\ast}\) with the right endpoint is \(T(\eta(L_{A_{t'}}))\) almost surely.
  Thus, the residual time of the excursion is \(T(\eta(L_{A_{t'}})) - t'\).
  On the other hand, from the definition of \(\Lambda^{(i)}\), we have \((L_{A_{t'}},e^{\ast}) \in \Supp N^{\ast,(i)}\) and the residual time can also be computed as \(\Gamma^{(i)}(L_{A_{t'}}) - t\).
  Thus,
  \[
  t' = t + T(\eta(L_{A_{t'}})) - \Gamma^{(i)}(L_{A_{t'}}).
  \]
  Since \(T^{-1} = A\) and \(\eta^{-1} = L\), we have
  \[
  \Lambda^{(i)}_{t} = (\Gamma^{(i)})^{-1}_{t} = (O^{\ast,(i)}(T(\eta_{t})))^{-1} = L_{A_{t'}}.
  \]
  Hence, we obtain \eqref{Eq09} for \(i \neq 0\).

  We show the case \(i=0\).
  By substituting \(t'\) for \(t\) to the trivial equality
  \[
  t = O^{\ast,(0)}(t) + \sum_{i \in I}O^{\ast,(i)}(t),
  \]
  we have
  \[
  t' = t + \sum_{i \in I}O^{\ast,(i)}(t').
  \]
  From the right-continuity of paths of \(X^{\ast}\), we have \(X_{A_{t'}} = o\).
  Thus, the value \(O^{\ast,(i)}(t') \ (i \in I)\) equals to the cumulative excursion lifetime of time changed excursions of \(X\) that start before \(A_{t'}\), that is,
  \[
  O^{\ast,(i)}(t') = \Gamma^{(i)}(L_{A_{t'}}).
  \]
  As above, we have \(\Lambda^{(0)}_{t} = (O^{\ast,(0)}(T_{\eta}))^{-1}(t) = L_{A_{t'}}\), and the proof is complete.
\end{proof}

\section{Multiscale limit theorems for occupation times}
\label{section:occupationTimeLimitTheorem}

In this section, we derive limit theorems for the occupation times of
\(X^\ast\).
We continue to assume that the measurable partition
\((D_0^{(i)})_{i\in I}\) is preserved by the time change.

The main result of this section is Theorem \ref{thm:occupation_limit}.
It describes the joint scaling limit of the occupation times \((O^{\ast,(i)})_{i\in I_0}\) under regular variation assumptions.
The result gives a multiscale asymptotics of occupation times.

Our strategy is first to establish scaling limits for the cumulative excursion lifetime processes \((\Gamma^{(i)})_{i\in I_0}\), and then to express the occupation times in terms of these processes.
The main technical point is that the relevant operations involve inverses and compositions of c\`adl\`ag non-decreasing functions.
Therefore, before proving the main theorem, we record several elementary facts on such functions and Skorokhod's \(J_1\)-topology.

\subsection{Auxiliary lemmas for \(J_1\)-convergence}

For \(d\in\bN\), we denote by \(D([0,\infty),\bR^d)\) the space of
c\`adl\`ag functions from \([0,\infty)\) to \(\bR^d\), equipped with
Skorokhod's \(J_1\)-topology.  We do not recall the full definition of this
topology; see, for example, \cite[Chapter VI]{JacodShiryaev}.  We only record
the form of the convergence criterion that will be used below.

Let \(\Lambda\) be the set of continuous strictly increasing maps
\(\lambda:[0,\infty)\to[0,\infty)\) such that
\[
\lambda(0)=0,
\quad
\lambda(t)\to\infty \quad (t\to\infty).
\]
Then \(x_n\to x\) on \(D([0,\infty),\bR^d)\) with the \(J_1\)-topology if and
only if, for every \(T>0\), there exist \(\lambda_n\in\Lambda\) such that
\[
\sup_{t\in[0,T]}
\left|x_n(\lambda_n(t))-x(t)\right|
\longrightarrow0,
\quad
\sup_{t\in[0,T]}
|\lambda_n(t)-t|
\longrightarrow0.
\]

We note the following elementary but important distinction.
Although the product space \(\prod_{i=1}^d D([0,\infty),\bR)\) can be naturally identified with \(D([0,\infty),\bR^d)\) as a set, the product topology is not the same as the \(J_1\)-topology on \(D([0,\infty),\bR^d)\).
The latter is stronger, because the same time-change \(\lambda_{n}\) must work simultaneously for all coordinates.
In fact, the \(J_{1}\)-topology on \(D([0,\infty),\bR^{d})\) is strictly stronger, as illustrated by the following simple example.
We always equip the product space with the product topology.

\begin{Ex}
Let
\[
y(t):=1_{[1,\infty)}(t),
\quad
y_n(t):=y(t),
\quad
z_n(t):=1_{[1+1/n,\infty)}(t),
\]
and set
\[
x_n:=(y_n,z_n),
\quad
x:=(y,y).
\]
Then \(y_n\to y\) and \(z_n\to y\) on \(D([0,\infty),\bR)\), and hence
\(x_n\to x\) on \(D([0,\infty),\bR)\times D([0,\infty),\bR)\).

On the other hand, \(x_n\) does not converge to \(x\) on
\(D([0,\infty),\bR^{2})\).
Indeed, any single time change \(\lambda \in \Lambda\) must send the jump time of \(y_{n}\) and the jump time of \(z_{n}\) to two distinct times.  More explicitly, let
\[
a:=\lambda^{-1}(1),
\quad
b:=\lambda^{-1}(1+1/n).
\]
Then \(a<b\).
For every \(s\in[a,b)\), we have
\[
y_{n}(\lambda(s))=1=y(s),
\quad
z_{n}(\lambda(s))=0,
\]
and therefore
\[
|x_n(\lambda(s))-x(s)|\geq1.
\]
Thus, it follows \(x_n\not\to x\) on \(D([0,\infty),\bR^{2})\) with the \(J_1\)-topology.
\end{Ex}

Below, we collect several continuity properties of the mappings appearing in our analysis, particularly those involving right-continuous inverses and compositions.
Although these results are largely standard, we record their precise statements here for convenience.
The proofs of Lemmas \ref{lem:dense_pointwise_inverse}, \ref{lem:inverse_composition_pointwise}, and \ref{lem:coordinatewise_to_multivariate_J1} are deferred to Appendix \ref{appendix:proofs}.

We begin with a deterministic lemma on the convergence of right-continuous inverses of non-decreasing functions.
The point is that the convergence of inverses only requires pointwise convergence on a dense set, rather than \(J_1\)-convergence.

\begin{Lem}\label{lem:dense_pointwise_inverse}
Let \(f_n,f:[0,\infty)\to[0,\infty)\) be non-decreasing functions and suppose
\[
f(0)=0,
\quad
f(\infty)=\infty.
\]
Assume that there exists a dense subset \(D\subset[0,\infty)\) such that
\[
f_n(x)\to f(x)
\quad (n\to\infty)
\]
for every \(x\in D\).
Then, for every continuity point \(t\ge0\) of \(f^{-1}\),
\[
f_n^{-1}(t)\to f^{-1}(t)
\quad (n\to\infty),
\]
where \(f_n^{-1}\) and \(f^{-1}\) denote the right-continuous inverses.
Moreover, if \(f\) is strictly increasing,
\[
\sup_{u\in[0,T]}|f_n^{-1}(u)-f^{-1}(u)|\to0
\quad (n\to\infty)
\]
for every \(T>0\).
\end{Lem}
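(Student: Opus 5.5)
The argument is a sandwich argument using the defining inequalities of the right-continuous inverse, \(f^{-1}(t)=\inf\{u\ge0 \mid f(u)>t\}\). Since \(f(\infty)=\infty\), \(f^{-1}(t)<\infty\) for every \(t\). Fix a continuity point \(t\) of \(f^{-1}\) and set \(x:=f^{-1}(t)\). We treat the upper and lower bounds separately.

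\emph{Upper bound.} Let \(\varepsilon>0\) and pick \(y\in D\) with \(x<y<x+\varepsilon\), which is possible since \(D\) is dense. By the definition of the infimum and the monotonicity of \(f\), we have \(f(y)>t\). Since \(f_n(y)\to f(y)\), for large \(n\) we get \(f_n(y)>t\), hence \(f_n^{-1}(t)\le y<x+\varepsilon\). This gives \(\limsup_n f_n^{-1}(t)\le f^{-1}(t)\), and this half needs no continuity at \(t\).

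\emph{Lower bound.} Here we use continuity of \(f^{-1}\) at \(t\). If \(x=0\) there is nothing to prove, so assume \(x>0\) and let \(0<\varepsilon<x\). By continuity we choose \(t'<t\) with \(f^{-1}(t')>x-\varepsilon/2\). Now pick \(y\in D\) with \(x-\varepsilon<y<f^{-1}(t')\). Since \(y<f^{-1}(t')\), we have \(f(y)\le t'<t\). Hence for large \(n\), \(f_n(y)<t\), and by monotonicity \(f_n(u)<t\) for all \(u\le y\). Therefore \(f_n^{-1}(t)\ge y>x-\varepsilon\).

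The delicate point is this choice of \(t'\). Without continuity one could have \(f(y)=t\) exactly for \(y\) just below \(x\) (a flat piece of \(f\) at level \(t\)), and perturbations \(f_n\) slightly above \(t\) would make \(f_n^{-1}(t)\) jump down. Continuity of \(f^{-1}\) at \(t\) is precisely what rules out such a flat piece.

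\emph{Uniform statement.} If \(f\) is strictly increasing, then \(f^{-1}\) has no jumps, so \(f^{-1}\) is continuous on \([0,\infty)\) and pointwise convergence holds for every \(t\). The functions \(f_n^{-1}\) and \(f^{-1}\) are non-decreasing, and the limit \(f^{-1}\) is continuous and finite on \([0,T]\). Pointwise convergence of monotone functions to a continuous limit on a compact interval upgrades to uniform convergence by the Dini/Pólya argument. Concretely, take a partition \(0=u_0<\dots<u_k=T\) such that the oscillation of \(f^{-1}\) on each \([u_{l-1},u_l]\) is below \(\varepsilon\). For \(u\in[u_{l-1},u_l]\), monotonicity gives
\[
f_n^{-1}(u_{l-1})-f^{-1}(u_l)\le f_n^{-1}(u)-f^{-1}(u)\le f_n^{-1}(u_l)-f^{-1}(u_{l-1}).
\]
Since \(f_n^{-1}\to f^{-1}\) at the finitely many points \(u_l\), both sides eventually lie within \(2\varepsilon\) of \(0\), uniformly in \(u\in[0,T]\). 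This yields \(\sup_{u\in[0,T]}|f_n^{-1}(u)-f^{-1}(u)|\to0\).
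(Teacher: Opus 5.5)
Your proof is correct and is essentially the paper's argument: you sandwich \(f_n^{-1}(t)\) between points of \(D\) just above and just below \(f^{-1}(t)\), using continuity of \(f^{-1}\) at \(t\) to guarantee \(f(u)<t\) for \(u<f^{-1}(t)\) (you get this directly through some \(t'<t\), while the paper argues by contradiction), and you then upgrade pointwise convergence of monotone functions to a continuous limit into local uniform convergence, which you make explicit. As in the paper, your lower bound at \(t=0\) tacitly uses \(f^{-1}(t')=0\) for \(t'<0\), so that continuity at \(0\) forces \(f^{-1}(0)=0\); with only right-continuity at \(0\) the claim fails, e.g.\ for \(f(u)=(u-1)^+\), \(f_n(u)=f(u)+u/n\), but this is harmless whenever \(f^{-1}(0)=0\), in particular when \(f\) is strictly increasing.
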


We next combine the previous lemma with \(J_1\)-convergence.

\begin{Lem} \label{lem:inverse_composition_pointwise}
Let \(x_n,y_n,x,y\in D([0,\infty),\bR)\).
Assume the following:
\begin{enumerate}
  \item \(x_n\to x\) on \(D([0,\infty),\bR)\) as \(n\to\infty\),
  \item \(y_n\) and \(y\) are non-decreasing, with
  \[
  y(0)=0,
  \quad
  y_n(\infty)=y(\infty)=\infty,
  \]
  \item \(y\) is strictly increasing,
  \item there exists a dense subset \(D\subset[0,\infty)\) such that
  \[
  y_n(u)\to y(u)
  \quad (n\to\infty)
  \]
  for every \(u\in D\).
\end{enumerate}
Then, for every \(t\ge0\) such that \(x\) is continuous at \(y^{-1}(t)\), we have
\begin{align}
x_n(y_n^{-1}(t))\longrightarrow x(y^{-1}(t))
\quad (n\to\infty),
\label{Eq10}
\end{align}
where \(y_n^{-1}\) and \(y^{-1}\) denote the right-continuous inverses.
\end{Lem}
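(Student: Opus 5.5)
The plan is to reduce \eqref{Eq10} to two ingredients: convergence of the inverses at the point \(t\), and the standard fact that \(J_1\)-convergence implies pointwise convergence at continuity points of the limit, uniformly along convergent sequences of arguments.

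First, since \(y\) is strictly increasing with \(y(0)=0\) and \(y(\infty)=\infty\), its right-continuous inverse \(y^{-1}\) is continuous on \([0,\infty)\). Indeed, a jump of \(y^{-1}\) would correspond to a flat stretch of \(y\), which is excluded. Hence every \(t\ge0\) is a continuity point of \(y^{-1}\). Applying Lemma \ref{lem:dense_pointwise_inverse} with \(f_n=y_n\), \(f=y\), using the dense set \(D\) from assumption (iv), gives
\[
s_n:=y_n^{-1}(t)\longrightarrow s:=y^{-1}(t)\quad (n\to\infty).
\]
We need \(y_n\) to take values in \([0,\infty)\) and \(y(\infty)=\infty\) for this; both are given. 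Finiteness of \(s_n\) follows from \(y_n(\infty)=\infty\).

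Second, we use the assumption that \(x\) is continuous at \(s\) and \(x_n\to x\) in \(J_1\). Fix \(T>s+1\). By the \(J_1\) criterion recalled above, there are \(\lambda_n\in\Lambda\) with \(\sup_{[0,T]}|x_n\circ\lambda_n-x|\to0\) and \(\sup_{[0,T]}|\lambda_n-\mathrm{id}|\to0\). Set \(r_n:=\lambda_n^{-1}(s_n)\). Since \(\lambda_n\) converges uniformly to the identity and \(s_n\to s\), we get \(r_n\to s\); in particular \(r_n\in[0,T]\) for large \(n\). Then
\[
|x_n(s_n)-x(s)|\le |x_n(\lambda_n(r_n))-x(r_n)|+|x(r_n)-x(s)|.
\]
The first term is bounded by \(\sup_{[0,T]}|x_n\circ\lambda_n-x|\to0\). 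The second tends to \(0\) by continuity of \(x\) at \(s\).

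The only delicate point is that \(\lambda_n^{-1}\) is also uniformly close to the identity on a compact interval. This follows because \(\lambda_n\) is a continuous increasing bijection close to the identity on \([0,T]\): the relation \(|\lambda_n^{-1}(u)-u|=|v-\lambda_n(v)|\) with \(v=\lambda_n^{-1}(u)\) shows \(\sup_{u\in[0,T-1]}|\lambda_n^{-1}(u)-u|\to0\) for large \(n\). I do not expect a genuine obstacle beyond this bookkeeping. The substantive work is already contained in Lemma \ref{lem:dense_pointwise_inverse}.
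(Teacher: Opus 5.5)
Your proposal is correct and follows the paper's proof essentially line for line. You get $s_n=y_n^{-1}(t)\to s=y^{-1}(t)$ from Lemma \ref{lem:dense_pointwise_inverse}, using that $y^{-1}$ is continuous because $y$ is strictly increasing, and then use $r_n=\lambda_n^{-1}(s_n)\to s$ with the same triangle inequality. Your bookkeeping on $\lambda_n^{-1}$ spells out what the paper calls ``readily'' seen. One small inaccuracy: nonnegativity of $y_n$ is not among the hypotheses, contrary to your ``both are given'', but the proof of Lemma \ref{lem:dense_pointwise_inverse} never uses it, so nothing is lost.
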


We also need a criterion that upgrades the convergence on the product space \(\prod_{1 \leq i \leq d}D([0,\infty),\bR)\) to that on \(D([0,\infty),\bR^{d})\).

\begin{Lem}\label{lem:coordinatewise_to_multivariate_J1}
Let \(d \in \bN\) and let \(x_n^{(i)},x^{(i)}\in D([0,\infty),\bR) \ (1 \leq i \leq d, \ n \in \bN)\).
Assume that, for every \(1 \leq i \leq d\),
\[
x_n^{(i)}\to x^{(i)}
\quad \text{on } D([0,\infty),\bR).
\]
Assume moreover that the limits \((x^{(i)})_{1 \leq i \leq d}\) have no common jump times, that is,
\[
\Delta x^{(i)}(t)\Delta x^{(j)}(t)=0
\quad
\text{for every } t\ge0,\ i\neq j.
\]
Then
\[
(x_n^{(i)})_{1 \leq i \leq d}\to (x^{(i)})_{1 \leq i \leq d}
\quad
\text{on } D([0,\infty),\bR^{d}).
\]
\end{Lem}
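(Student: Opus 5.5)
Fix \(T>0\). The plan is to construct explicit time changes \(\lambda_n\in\Lambda\) that match the jumps of all coordinates simultaneously. Since each \(x^{(i)}\) is c\`adl\`ag, for any \(\varepsilon>0\) it has only finitely many jumps of size \(>\varepsilon\) on \([0,T+1]\). Choose \(\varepsilon\) so that no coordinate has a jump of size exactly \(\varepsilon\), and shrink \(T\) slightly if needed so that no \(x^{(i)}\) jumps at \(T\). By hypothesis, the union over \(i\) of the large-jump times is a finite set \(t_1<\dots<t_m\), and each \(t_k\) belongs to exactly one coordinate \(i(k)\).

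The standard structure of \(J_1\)-convergence in one dimension, see \cite[Chapter VI, Proposition 2.1]{JacodShiryaev}, gives the following. For each \(k\) there are times \(t^n_k\to t_k\) at which \(x^{(i(k))}_n\) has a jump converging to \(\Delta x^{(i(k))}(t_k)\). Moreover, for every other coordinate \(j\neq i(k)\), the function \(x^{(j)}_n\) is uniformly close to continuous near \(t_k\): its oscillation on a small neighbourhood of \(t_k\) is eventually small, because \(x^{(j)}\) is continuous at \(t_k\) and \(x^{(j)}_n\to x^{(j)}\). In addition, on compact intervals avoiding \(\{t_1,\dots,t_m\}\), all the \(x^{(i)}_n\) have no jumps larger than about \(\varepsilon\).

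Next define \(\lambda_n\) to be piecewise linear with \(\lambda_n(t_k)=t^n_k\), \(\lambda_n(0)=0\), and identity slope beyond the last node. Then \(\sup_{t\le T}|\lambda_n(t)-t|\to0\). It remains to show \(\limsup_n\sup_{t\le T}|x^{(i)}_n(\lambda_n(t))-x^{(i)}(t)|\le C\varepsilon\) for every \(i\), and then to let \(\varepsilon\downarrow0\) by a diagonal argument. For coordinate \(i\), compare with its own one-dimensional time change \(\mu^{(i)}_n\) from the hypothesis. Both \(\lambda_n\) and \(\mu^{(i)}_n\) send the large jump times of \(x^{(i)}\) to (asymptotically) the jump times of \(x^{(i)}_n\). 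Away from these, \(x^{(i)}\) and \(x^{(i)}_n\) have oscillation \(O(\varepsilon)\) over short intervals, by the modulus \(w'\) characterization. Hence replacing \(\mu^{(i)}_n\) by \(\lambda_n\) costs only \(O(\varepsilon)\). Near the large jumps of the other coordinates, \(\lambda_n\) may distort time, but \(x^{(i)}\) is continuous there, so this costs only a small oscillation.

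The main obstacle is this comparison step. One needs a clean argument that two time changes which agree on the large jump times yield uniformly close compositions, up to \(O(\varepsilon)\). I would carry it out using the modulus \(w'_T(x,\delta)\): subdivide \([0,T]\) into intervals of mesh \(<\delta\) whose endpoints include the \(t_k\), and use that \(w'_T(x_n^{(i)},\delta)\) is eventually small uniformly in \(n\) (relative compactness). Alternatively, one could cite \cite[Chapter VI, Proposition 2.2(b)]{JacodShiryaev}: if \(x_n\to x\) and \(y_n\to y\), with \(x\) and \(y\) having no common jumps, then \((x_n,y_n)\to(x,y)\) in \(D(\bR^2)\). Iterating this over \(d\) coordinates gives the result, since the pair \((x^{(1)},\dots,x^{(k)})\) and \(x^{(k+1)}\) again have no common jumps.
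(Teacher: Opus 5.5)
Your fallback route is the paper's route: apply \cite[Chapter VI, Proposition 2.2(b)]{JacodShiryaev} to two coordinates and induct on \(d\). As you note, \((x^{(1)},\dots,x^{(k)})\) and \(x^{(k+1)}\) again have no common jump times, which is what the induction needs. The gap is that you quote that proposition with the wrong hypothesis. It does not assume ``no common jumps''. It assumes that for every \(t\ge0\) there is a single sequence \(t_n\to t\) along which both \(\Delta x_n^{(1)}(t_n)\to\Delta x^{(1)}(t)\) and \(\Delta x_n^{(2)}(t_n)\to\Delta x^{(2)}(t)\). Deriving this simultaneous jump-approximation property from the no-common-jump assumption is exactly what the paper's proof does. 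As written, your citation skips the one step that has to be argued.

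The step is short, and your first route already contains the key observation.
\begin{itemize}
\item If \(\Delta x^{(1)}(t)\neq0\), take \(t_n\to t\) with \(\Delta x_n^{(1)}(t_n)\to\Delta x^{(1)}(t)\) from \cite[Chapter VI, Proposition 2.1(a)]{JacodShiryaev}. Then \(x^{(2)}\) is continuous at \(t\). Let \(\mu_n\) be time changes witnessing \(x_n^{(2)}\to x^{(2)}\), and set \(s_n:=\mu_n^{-1}(t_n)\to t\). Uniform closeness of \(x_n^{(2)}\circ\mu_n\) to \(x^{(2)}\) gives \(x_n^{(2)}(t_n)\to x^{(2)}(t)\) and \(x_n^{(2)}(t_n-)\to x^{(2)}(t)\). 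Hence \(\Delta x_n^{(2)}(t_n)\to0\).
\item The case \(\Delta x^{(2)}(t)\neq0\) is symmetric.
\item If neither coordinate jumps at \(t\), take \(t_n=t\); the same estimate sends both jumps to \(0\).
\end{itemize}
This is your claim that \(x_n^{(j)}\) has eventually small oscillation near a large jump of another coordinate, made precise. With it, the citation and the induction close the proof.

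Your primary construction of a common piecewise-linear \(\lambda_n\) through the large jump times is sound in outline. However, the comparison step you flag is left unproved, and carrying it out would essentially reprove the Jacod--Shiryaev proposition. It is unnecessary once the jump-approximation condition is verified.
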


The corresponding probabilistic statement will be used repeatedly.

\begin{Cor}\label{cor:product_to_multivariate_J1}
Let \(d \in \bN\) and let \(X_n^{(i)},X^{(i)} \ (1 \leq i \leq d,\ n \in \bN)\) be \(D([0,\infty),\bR)\)-valued random elements.
Assume that
\[
(X_n^{(i)})_{1 \leq i \leq d}\xrightarrow[n\to\infty]{d}(X^{(i)})_{1 \leq i \leq d}
\]
on the product space \(\prod_{1 \leq i \leq d}D([0,\infty),\bR)\).
Assume moreover that the limits \((X^{(i)})_{1 \leq i \leq d}\) have no common jump times almost surely.
Then
\[
(X_n^{(i)})_{1 \leq i \leq d}\xrightarrow[n\to\infty]{d}(X^{(i)})_{1 \leq i \leq d}
\]
on \(D([0,\infty),\bR^{d})\).
\end{Cor}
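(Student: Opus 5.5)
The natural route is Skorokhod's representation theorem combined with the deterministic Lemma \ref{lem:coordinatewise_to_multivariate_J1}. The space \(\prod_{1\le i\le d}D([0,\infty),\bR)\) with the product topology is Polish, being a finite product of Polish spaces. Hence there exist a probability space and random elements \((\tilde X_n^{(i)})_{i}\) and \((\tilde X^{(i)})_{i}\) on it such that:
\begin{itemize}
\item \((\tilde X_n^{(i)})_{i}\overset{d}{=}(X_n^{(i)})_{i}\) and \((\tilde X^{(i)})_{i}\overset{d}{=}(X^{(i)})_{i}\) as product-space-valued random elements;
\item \(\tilde X_n^{(i)}\to\tilde X^{(i)}\) in \(D([0,\infty),\bR)\) for every \(i\), almost surely.
\end{itemize}

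Next we transfer the hypothesis that the limits have no common jump times. The set
\[
\{(x^{(i)})_i \mid \Delta x^{(i)}(t)\Delta x^{(j)}(t)=0 \text{ for all } t\ge0,\ i\ne j\}
\]
must be a measurable subset of the product space, so that its probability is determined by the law. Measurability follows by writing its complement as a countable union. For \(k\in\bN\) and \(i\ne j\), consider the set of paths with a common jump of size at least \(1/k\) in both coordinates \(i\) and \(j\) at a time in \([0,k]\). The jump times of size at least \(1/k\) are finite in number on compacts and are measurable functionals of the path. Equivalently, this set can be described via rational approximations of the oscillation on small intervals.

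Given measurability, the equality of laws gives that \((\tilde X^{(i)})_i\) almost surely has no common jump times. Then, on an almost sure event, Lemma \ref{lem:coordinatewise_to_multivariate_J1} applies pathwise and yields \((\tilde X_n^{(i)})_i\to(\tilde X^{(i)})_i\) in \(D([0,\infty),\bR^d)\).

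It remains to return to the original random elements. As sets, \(D([0,\infty),\bR^d)\) and the product space coincide. Their Borel \(\sigma\)-fields also coincide: both are generated by the coordinate projections \(x\mapsto x(t)\). Therefore the laws of \((\tilde X_n^{(i)})_i\) and \((X_n^{(i)})_i\) agree as \(D([0,\infty),\bR^d)\)-valued random elements, and likewise for the limits. Almost sure convergence implies convergence in distribution, so for bounded continuous \(F\) on \(D([0,\infty),\bR^d)\) we get \(E[F((X_n^{(i)})_i)]=E[F((\tilde X_n^{(i)})_i)]\to E[F((\tilde X^{(i)})_i)]=E[F((X^{(i)})_i)]\).

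The main obstacle I expect is the measure-theoretic bookkeeping: the measurability of the no-common-jump set and the identification of the Borel \(\sigma\)-fields. If the direct description proves awkward, I would instead use that \(x\mapsto \sum_{t\le k}|\Delta x^{(i)}(t)\Delta x^{(j)}(t)|\wedge 1\) is a measurable functional, obtained as a limit over dyadic partitions. The analytic content is entirely contained in Lemma \ref{lem:coordinatewise_to_multivariate_J1}.
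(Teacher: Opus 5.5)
Your proposal is correct and follows essentially the same route as the paper, which derives the corollary in one line from Skorokhod's representation theorem and the deterministic Lemma \ref{lem:coordinatewise_to_multivariate_J1}. Your additional bookkeeping fills in details the paper leaves implicit: measurability of the no-common-jump set, and the coincidence of the Borel \(\sigma\)-fields of \(D([0,\infty),\bR^d)\) and the product space, both generated by the coordinate projections.
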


\begin{proof}
This follows from Skorokhod's representation theorem and Lemma \ref{lem:coordinatewise_to_multivariate_J1}.
\end{proof}

\subsection{Multiscale limits for occupation times}

Throughout this section, we assume that \(I_{0}\) is finite and that the following dominant regular variation condition holds: there exist \(\alpha\in(0,1)\), a positive slowly varying function \(K\) at \(\infty\), and constants \(\beta_i\ge0\) \((i\in I_{0})\) with \(\sum_{i\in I_{0}}\beta_i=1\) such that
\begin{equation}
\tag{\(A\)}\label{assumption:regularVariation}
n^{\ast,(i)}(\zeta>t)\sim \beta_i t^{-\alpha}K(t)
\quad (t\to\infty)
\end{equation}
for every \(i\in I_{0}\).

For \(i\in I_{0}\) with \(\beta_i=0\), we consider the following sub-dominant regular variation condition: there exist \(\tilde{\beta}_i>0\), \(\tilde{\alpha}_i\in[\alpha,1)\), and a slowly varying function \(K_i\) at \(\infty\) such that
\begin{equation}
\tag{\(B_i\)}\label{assumption:regularVariation2}
n^{\ast,(i)}(\zeta>t)\sim \tilde{\beta}_i t^{-\tilde{\alpha}_i}K_i(t)
\quad (t\to\infty).
\end{equation}

Accordingly, we decompose \(I_0\) as
\[
I_0=I_{0,\mathrm{dom}}\sqcup I_{0,\mathrm{sub}}\sqcup I_{0,\mathrm{res}},
\]
where
\begin{align*}
I_{0,\mathrm{dom}}
&:=\{i\in I_0 \mid \beta_i>0\},\\
I_{0,\mathrm{sub}}
&:=\{i\in I_0 \mid \beta_i=0 \text{ and } i \text{ satisfies \eqref{assumption:regularVariation2}}\},\\
I_{0,\mathrm{res}}
&:=I_0\setminus\bigl(I_{0,\mathrm{dom}}\sqcup I_{0,\mathrm{sub}}\bigr).
\end{align*}
We allow that \(I_{0,\mathrm{sub}}\) or \(I_{0,\mathrm{res}}\) may be empty, whereas \(I_{0,\mathrm{dom}}\) is not by \(\sum_{i\in I_0}\beta_i=1\).

\begin{Rem}
If \(i\in I_0\) satisfies \(\beta_i=0\) and \eqref{assumption:regularVariation2}, then
\[
t^{\tilde{\alpha}_i-\alpha}\frac{K(t)}{K_i(t)}\to\infty
\quad (t\to\infty).
\]
Thus, every sub-dominant class is negligible on the dominant scale \(t^{-\alpha}K(t)\).
\end{Rem}

We begin with the scaling limit of the cumulative excursion lifetime processes.

\begin{Prop}\label{prop:GammaScaling}
Let \(g\) be the right-continuous inverse of the function
\[
h(x) := \frac{x^\alpha}{\Gamma(1-\alpha)K(x)},
\]
and, for each \(i\in I_{0,\mathrm{sub}}\), let \(g_i\) be the right-continuous inverse of the function
\[
x\longmapsto \frac{x^{\tilde{\alpha}_i}}{\Gamma(1-\tilde{\alpha}_i)K_i(x)}.
\]
For \(i\in I_0\), define
\begin{align}
X^{(i)}_\lambda(t):=
\begin{cases}
\frac{\Gamma^{(i)}_{\lambda t}}{g(\lambda)} & (i\in I_{0,\mathrm{dom}}\cup I_{0,\mathrm{res}}),\\
\frac{\Gamma^{(i)}_{\lambda t}}{g_i(\lambda)} & (i\in I_{0,\mathrm{sub}}),
\end{cases}
\quad (t\ge0),
\end{align}
and
\begin{align}  
X^{(i)}(t):=
\begin{cases}
\xi^{(i)}(t) & (i\in I_{0,\mathrm{dom}}),\\
\tilde{\xi}^{(i)}(t) & (i\in I_{0,\mathrm{sub}}),\\
0 & (i\in I_{0,\mathrm{res}}),
\end{cases}
\quad (t\ge0),
\end{align}
where the family
\[
\bigl((\xi^{(i)})_{i\in I_{0,\mathrm{dom}}},(\tilde{\xi}^{(i)})_{i\in I_{0,\mathrm{sub}}}\bigr)
\]
consists of independent stable subordinators characterized by
\[
E[e^{-q\xi^{(i)}(t)}]=\exp(-t\beta_i q^\alpha)
\quad (i\in I_{0,\mathrm{dom}}),
\]
and
\[
E[e^{-q\tilde{\xi}^{(i)}(t)}]=\exp(-t\tilde{\beta}_i q^{\tilde{\alpha}_i})
\quad (i\in I_{0,\mathrm{sub}}),
\]
for \(q,t\ge0\).
Then
\[
\bigl(X^{(i)}_\lambda\bigr)_{i\in I_0}
\xrightarrow[\lambda\to\infty]{d}
\bigl(X^{(i)}\bigr)_{i\in I_0}
\]
on \(D([0,\infty),\bR^{I_0})\).
\end{Prop}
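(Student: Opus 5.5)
Each coordinate is a subordinator, so the plan is to get marginal convergence from Proposition \ref{prop:stableConvergence}, use independence of the coordinates to pass to joint convergence on the product space, and then upgrade to \(J_1\) on \(D([0,\infty),\bR^{I_0})\) with Corollary \ref{cor:product_to_multivariate_J1}.

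\textbf{Independence.} For \(i\in I\), the process \(\Gamma^{(i)}\) is the cumulative lifetime of \(N^{\ast,(i)}\). By the proposition following Theorem \ref{Thm:transformed-excursion-PPP}, the processes \(N^{\ast,(i)}\), \(i\in I\), are independent Poisson point processes with intensities \(ds\otimes n^{\ast,(i)}\). Hence \((\Gamma^{(i)})_{i\in I}\) are independent subordinators, each with L\'evy measure \(n^{\ast,(i)}(\zeta\in dx)\) and no drift. By Proposition \ref{prop:Gamma0LawAndIndependence}, \(\Gamma^{(0)}\) is a subordinator independent of \((\Gamma^{(i)})_{i\in I}\), and it has the law of \(Z^{(f(0))}(ct)\). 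Its L\'evy tail is \(c\,\nu_{f(0)}(t,\infty)\), and I read \eqref{assumption:regularVariation} for \(i=0\) as a hypothesis on this tail, with \(n^{\ast,(0)}\) understood as that L\'evy measure. Since \(I_0\) is finite and the coordinates are mutually independent, joint convergence in law on the product space follows from marginal convergence.

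\textbf{Marginal convergence.} The three classes are handled separately.
\begin{itemize}
\item For \(i\in I_{0,\mathrm{dom}}\), the tail is \(\sim\beta_i t^{-\alpha}K(t)\) with \(\beta_i>0\). Proposition \ref{prop:stableConvergence}, applied with \(c=\beta_i\), gives \(\Gamma^{(i)}_{\lambda\cdot}/g(\lambda)\to\beta_i^{1/\alpha}Z^{(\alpha)}\), and this has the law of \(\xi^{(i)}\).
\item For \(i\in I_{0,\mathrm{sub}}\), the same argument with \((\tilde\alpha_i,K_i,g_i,\tilde\beta_i)\) gives the limit \(\tilde\xi^{(i)}\).
\item For \(i\in I_{0,\mathrm{res}}\), the tail is \(o(t^{-\alpha}K(t))\), which is the case \(c=0\) under the paper's asymptotic convention. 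Proposition \ref{prop:stableConvergence} with \(c=0\) gives convergence to \(0\).
\end{itemize}
The residual case is the step that needs care. If Proposition \ref{prop:stableConvergence} is not trusted at \(c=0\), I would argue directly. Let \(\nu_i\) denote the L\'evy measure of \(\Gamma^{(i)}\). The Laplace exponent of \(\Gamma^{(i)}_{\lambda t}/g(\lambda)\) at \(q\) is \(\lambda t\int(1-e^{-qx/g(\lambda)})\nu_i(dx)\). Integration by parts writes this as \(\lambda t\,(q/g(\lambda))\int_0^\infty e^{-qx/g(\lambda)}\nu_i(x,\infty)\,dx\). Bounding \(\nu_i(x,\infty)\le\varepsilon x^{-\alpha}K(x)\) for large \(x\) and using Karamata's theorem gives at most \(C\varepsilon\lambda t\,(q/g(\lambda))^{\alpha}K(g(\lambda))\). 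Since \(h(g(\lambda))\sim\lambda\), this bound is \(O(\varepsilon)\). The small-\(x\) part of the integral contributes \(O(1/g(\lambda))\cdot\lambda\). This term is \(o(1)\) because \(g(\lambda)\) grows like \(\lambda^{1/\alpha}\) with \(1/\alpha>1\).

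\textbf{Upgrade to \(J_1\).} For processes with independent increments, convergence of one-dimensional marginals gives functional convergence in \(D([0,\infty),\bR)\); this is the remark in the proof of Proposition \ref{prop:stableConvergence}. In the residual case the limit is the continuous zero process, so convergence in law on \(D\) follows as well. Combining this with independence gives convergence of \((X^{(i)}_\lambda)_{i\in I_0}\) to \((X^{(i)})_{i\in I_0}\) on \(\prod_{i\in I_0}D([0,\infty),\bR)\).

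It remains to check the hypothesis of Corollary \ref{cor:product_to_multivariate_J1}. The limiting processes are independent, and each is either a stable subordinator or zero. Independent L\'evy processes almost surely have no common jump times: the jump times of one form a countable set independent of the other, and a L\'evy process has no fixed discontinuities. Corollary \ref{cor:product_to_multivariate_J1} then yields the convergence on \(D([0,\infty),\bR^{I_0})\).
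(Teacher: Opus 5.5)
Your proposal is correct and follows the paper's own proof. It gets marginal convergence from Proposition \ref{prop:stableConvergence} (with $c=\beta_i$, $c=\tilde\beta_i$, and $c=0$ for the three classes), joint convergence on the product space from the independence of the $\Gamma^{(i)}$, and the upgrade to $D([0,\infty),\bR^{I_0})$ via Corollary \ref{cor:product_to_multivariate_J1}, since independent subordinators have no common jumps. Your additional points only make explicit what the paper leaves implicit: independence via the partition-preservation proposition and Proposition \ref{prop:Gamma0LawAndIndependence}, reading $n^{\ast,(0)}$ as the L\'evy measure of $\Gamma^{(0)}$, and the direct check of the $c=0$ case. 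In that direct check, note that for $i=0$ the possible drift $c\,a_{f(0)}$ of $\Gamma^{(0)}$ is likewise negligible because $\lambda/g(\lambda)\to0$.
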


\begin{proof}
For each \(i\in I_{0,\mathrm{dom}}\), Proposition \ref{prop:stableConvergence} yields
\[
X^{(i)}_\lambda
=
\left(\frac{\Gamma^{(i)}_{\lambda t}}{g(\lambda)}\right)_{t\ge0}
\xrightarrow[\lambda\to\infty]{d}
(\xi^{(i)}(t))_{t\ge0}
=
X^{(i)}
\]
on \(D([0,\infty),\bR)\).
Similarly, for each \(i\in I_{0,\mathrm{sub}}\), Proposition \ref{prop:stableConvergence} gives
\[
X^{(i)}_\lambda
=
\left(\frac{\Gamma^{(i)}_{\lambda t}}{g_i(\lambda)}\right)_{t\ge0}
\xrightarrow[\lambda\to\infty]{d}
(\tilde{\xi}^{(i)}(t))_{t\ge0}
=
X^{(i)}
\]
on \(D([0,\infty),\bR)\).
Finally, for each \(i\in I_{0,\mathrm{res}}\), Proposition \ref{prop:stableConvergence} with \(c=0\) implies
\[
X^{(i)}_\lambda
=
\left(\frac{\Gamma^{(i)}_{\lambda t}}{g(\lambda)}\right)_{t\ge0}
\xrightarrow[\lambda\to\infty]{d}
0
=
X^{(i)}
\]
on \(D([0,\infty),\bR)\).

Since the processes \((\Gamma^{(i)})_{i\in I_0}\) are independent, it follows that
\[
\bigl(X^{(i)}_\lambda\bigr)_{i\in I_0}
\xrightarrow[\lambda\to\infty]{d}
\bigl(X^{(i)}\bigr)_{i\in I_0}
\]
on the product space \(\prod_{i\in I_0}D([0,\infty),\bR)\).

All non-constant limit processes are independent subordinators and hence have no fixed jump times.
Therefore, they have no common jump times almost surely.
Applying Corollary \ref{cor:product_to_multivariate_J1}, we obtain
\[
\bigl(X^{(i)}_\lambda\bigr)_{i\in I_0}
\xrightarrow[\lambda\to\infty]{d}
\bigl(X^{(i)}\bigr)_{i\in I_0}
\]
on \(D([0,\infty),\bR^{I_0})\).
\end{proof}

We now prove our main result, a multiscale occupation time limit theorem.
A characterization of the limit distributions is given in Remarks \ref{rem:skewBessel}, \ref{rem:limit_distribution}.

\begin{Thm}\label{thm:occupation_limit}
Define
\[
S(t) := \sum_{k\in I_{0,\mathrm{dom}}}\xi^{(k)}(t) \quad (t \ge 0).
\]
For \(i\in I_{0,\mathrm{dom}}\), let \(\ell^{(i)}\) denote the right-continuous inverse of \(\xi^{(i)}\), and define \(\Phi^{(i)}\) as the right-continuous inverse of the strictly increasing process
\[
t \longmapsto t+\sum_{k\in I_{0,\mathrm{dom}},\ k\neq i}\xi^{(k)}(\ell^{(i)}_t)
\quad (t\ge0).
\]
For \(i\in I_{0,\mathrm{sub}}\), define
\[
\Phi^{(i)}(t)
:=
\tilde{\xi}^{(i)}(S^{-1}_{t})
\quad (t\ge0).
\]
For \(i\in I_{0,\mathrm{res}}\), set
\[
\Phi^{(i)} \equiv 0.
\]

Then, for any sequence of fixed times \(0 \leq t_{1} < \cdots < t_{n}\), 
\[
\left(
\left(
\frac{O^{\ast,(i)}(\lambda \cdot)}{\lambda}
\right)_{i\in I_{0,\mathrm{dom}}\cup I_{0,\mathrm{res}}}, \frac{L_{A(\lambda\cdot)}}{h(\lambda)},
\left(
\left(
\frac{O^{\ast,(i)}(\lambda t_{k})}{g_{i}(h(\lambda))}
\right)_{1 \leq k \leq n}
\right)_{i\in I_{0,\mathrm{sub}}}
\right)
\]
converges in distribution to
\[
\left(
\left(
\Phi^{(i)}(\cdot)
\right)_{i\in I_{0,\mathrm{dom}}\cup I_{0,\mathrm{res}}},S^{-1}(\cdot),
\left(
\left(
\Phi^{(i)}(t_k)
\right)_{1 \leq k \leq n}
\right)_{i\in I_{0,\mathrm{sub}}}
\right)
\]
as \(\lambda \to \infty\) on the product space
\[
C([0,\infty),\bR^{I_{0,\mathrm{dom}}\cup I_{0,\mathrm{res}}} \times \bR)
\times
(\bR^n)^{I_{0,\mathrm{sub}}}.
\]
\end{Thm}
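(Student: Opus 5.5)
The plan is to pass every occupation time through the Williams formula \eqref{Eq09} and the identity \eqref{Eq11}. Then the continuous mapping machinery of the auxiliary $J_1$-lemmas can be applied to the joint scaling limit of Proposition \ref{prop:GammaScaling}. By Skorokhod's representation theorem, I will work on a probability space where $(X^{(i)}_\lambda)_{i\in I_0}\to(X^{(i)})_{i\in I_0}$ almost surely in $D([0,\infty),\bR^{I_0})$, so that the remaining arguments are deterministic.

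First I identify $L_{A(\lambda t)}/h(\lambda)$. Since $T(\eta_s)=\sum_{k\in I_0}\Gamma^{(k)}(s)$ by \eqref{Eq14}, and $A=T^{-1}$, $L=\eta^{-1}$, the map $u\mapsto L_{A(u)}$ is the right-continuous inverse of $s\mapsto\sum_k\Gamma^{(k)}(s)$. Substituting $\lambda\mapsto h(\lambda)$ and using $g(h(\lambda))\sim\lambda$ (regular variation), the rescaled sum $\sum_k\Gamma^{(k)}(h(\lambda)\cdot)/\lambda$ converges in $J_1$ to $S(\cdot)+\sum_{\mathrm{sub}}0$. Here the sub-dominant terms are killed because $g_i(h(\lambda))/\lambda\to0$, by the Remark following \eqref{assumption:regularVariation2}. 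Since $S$ is strictly increasing, Lemma \ref{lem:dense_pointwise_inverse} gives locally uniform convergence of the inverses to $S^{-1}$, which is continuous.

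Next, for $i\in I_{0,\mathrm{dom}}$, I would write \eqref{Eq09} at scale $\lambda$:
\[
\frac{(O^{\ast,(i)})^{-1}(\lambda t)}{\lambda}=t+\sum_{k\neq i}\frac{\Gamma^{(k)}\bigl(h(\lambda)\,\Lambda^{(i)}_{\lambda t}/h(\lambda)\bigr)}{\lambda}.
\]
Here $\Lambda^{(i)}_{\lambda\cdot}/h(\lambda)$ is the inverse of $\Gamma^{(i)}(h(\lambda)\cdot)/\lambda\to\xi^{(i)}$, and $\xi^{(i)}$ is strictly increasing since $\alpha\in(0,1)$. Lemma \ref{lem:dense_pointwise_inverse} therefore gives uniform convergence to $\ell^{(i)}$. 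Lemma \ref{lem:inverse_composition_pointwise} then yields pointwise convergence of $\Gamma^{(k)}(h(\lambda)\cdot)/\lambda$ composed with this inverse to $\xi^{(k)}(\ell^{(i)}_t)$, at every $t$ where $\xi^{(k)}$ is continuous at $\ell^{(i)}_t$. Independence of $\xi^{(k)}$ and $\ell^{(i)}$ makes this hold for Lebesgue-a.e. $t$, which is a dense set. For residual $k$ the term goes to $0$ uniformly on compacts, and sub-dominant $k$ is handled by the same ratio argument. A second application of Lemma \ref{lem:dense_pointwise_inverse} inverts the limit, which is strictly increasing, and gives locally uniform convergence of $O^{\ast,(i)}(\lambda\cdot)/\lambda$ to $\Phi^{(i)}$. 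For $i\in I_{0,\mathrm{res}}$, I use $O^{\ast,(i)}(u)\le\Gamma^{(i)}(L_{A(u)})$, where a completed current excursion is included (interpolation bound). Then $\Gamma^{(i)}(h(\lambda)L_{A(\lambda t)}/h(\lambda))/\lambda\to0$ uniformly, since $L_{A}$ is bounded on compacts.

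For $i\in I_{0,\mathrm{sub}}$, I sandwich $O^{\ast,(i)}(\lambda t)$ between $\Gamma^{(i)}(L_{A(\lambda t)}-)$ and $\Gamma^{(i)}(L_{A(\lambda t)})$, and divide by $g_i(h(\lambda))$. Both bounds equal $\tilde X^{(i)}_{h(\lambda)}$ evaluated at $L_{A(\lambda t)}/h(\lambda)\to S^{-1}_t$, modulo left limits. Since $\tilde\xi^{(i)}$ is independent of $S$, it is almost surely continuous at the fixed random point $S^{-1}_{t_k}$. Hence the composition converges at each fixed $t_k$ jointly, which is why only finite-dimensional convergence is claimed. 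Joint convergence of all components comes for free, since everything is derived from one almost surely convergent realization.

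I expect the main obstacle to be the composition step for the dominant classes. There, $J_1$ convergence does not commute with composition by a random inverse, and I need convergence on a dense set of $t$ that is valid almost surely. I would handle this via the independence of the limit subordinators and Fubini, as indicated above.
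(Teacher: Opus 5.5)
Your proposal is correct. For the local-time coordinate, the dominant classes and the residual classes, it is essentially the paper's argument; your bound $O^{\ast,(i)}(\lambda t)\le\Gamma^{(i)}(L_{A(\lambda t)})$ is exactly the paper's bound $\Gamma^{(i)}(h(\lambda)S_\lambda^{-1}(t))$.

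There is one cosmetic difference in Case 1. To get a dense set of good $t$, the paper argues deterministically: once $\xi^{(i)}$ and $\xi^{(k)}$ have no common jumps, $\{t:\ell^{(i)}_t\in\disc(\xi^{(k)})\}=\bigcup_{s\in\disc(\xi^{(k)})}[\xi^{(i)}(s-),\xi^{(i)}(s)]$ is countable. Your Fubini argument instead gives a Lebesgue-null exceptional set. Either suffices for Lemma \ref{lem:dense_pointwise_inverse}.

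The genuine difference is the sub-dominant case. The paper runs Williams formula at scale $g_i(h(\lambda))$ and proves $(O^{\ast,(i)})^{-1}(g_i(h(\lambda))t)/\lambda\to S(\widetilde{\ell}^{(i)}_t)$ off a countable set. It then inverts a second time at continuity points, and finally shows $S^{-1}_{t_k}\notin\disc(\widetilde{\xi}^{(i)})$ almost surely, via countability of $S(\disc(\widetilde{\xi}^{(i)}))$ and the atomless law of $S(s)$.

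Your sandwich $\Gamma^{(i)}(L_{A(\lambda t)}-)\le O^{\ast,(i)}(\lambda t)\le\Gamma^{(i)}(L_{A(\lambda t)})$ avoids the second inversion. Once $L_{A(\lambda\cdot)}/h(\lambda)\to S^{-1}$ is known (it is needed anyway), two facts control both the value and the left limit at $s_\lambda=L_{A(\lambda t_k)}/h(\lambda)\to S^{-1}_{t_k}$. These are the $J_1$ convergence $\Gamma^{(i)}(h(\lambda)\cdot)/g_i(h(\lambda))\to\widetilde{\xi}^{(i)}$, and continuity of $\widetilde{\xi}^{(i)}$ at the independent random time $S^{-1}_{t_k}$ (no fixed jump times).

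What each route buys: yours handles residual and sub-dominant classes by one mechanism and makes transparent why only fixed-time convergence survives. The paper's keeps all classes inside a single Williams-formula/inversion scheme and identifies the sub-dominant limit through its inverse $S(\widetilde{\ell}^{(i)}_t)$.

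One small point: $0$ may lie in $I_{0,\mathrm{sub}}$, and \eqref{Eq13} covers only $i\in I$. So justify the sandwich by continuity and monotonicity of $O^{\ast,(i)}$, together with \eqref{Eq11} and $\lim_{r\uparrow s}T(\eta_r)\le u\le T(\eta_s)$ for $s=L_{A_u}$. This works uniformly for all $i\in I_0$.
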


\begin{Rem}
We make two observations regarding the statement of Theorem \ref{thm:occupation_limit}.
\begin{enumerate}
  \item For \(i \in I_{0,\mathrm{sub}}\), the finite-dimensional convergence cannot be strengthened to convergence in \(J_1\)-topology.
  The occupation time \(O^{\ast,(i)}(t)\) are continuous, while the limit \(\tilde{\xi}^{(i)}(S^{-1}_t)\) has jumps.
  Since the space of continuous functions is closed in the \(J_1\)-topology, convergence to a discontinuous limit is impossible.
  \item When the stagnancy rate \(c\) is zero, the occupation time at \(o\) is identically zero, and the corresponding assertion for \(O^{\ast,(0)}\) is trivial.
  In this case, the process \(L_{A_t}\), which corresponds to a local time, serves as a substitute for of \(O^{\ast,(0)}\), and the scaling limit of \(L_{A_t}\) gives analog of the occupation time limit at \(o\).
\end{enumerate}
\end{Rem}

\begin{proof}[Proof of Theorem \ref{thm:occupation_limit}]
We use the processes \(X_{\lambda}^{(i)}\), \(X^{(i)}\), \(\xi^{(i)}\), and \(\tilde{\xi}^{(i)}\) introduced in Proposition \ref{prop:GammaScaling}.
By Skorokhod's representation theorem, we may work on a probability space on which
\[
\left(X_{\lambda}^{(i)}\right)_{i\in I_0}
\longrightarrow
\left(X^{(i)}\right)_{i\in I_0}
\quad\text{a.s. on }D([0,\infty),\bR^{I_0}).
\]
In what follows, we fix such a realization.
Note that the auxiliary processes appearing below are obtained from the family \((\Gamma^{(k)})_{k\in I_0}\).
Indeed, \(T(\eta_t)=\sum_{k\in I_0}\Gamma^{(k)}(t)\), \(L_{A} = (T(\eta))^{-1}\), \(\Lambda^{(i)} = (\Gamma^{(i)})^{-1}\), and the occupation times \(O^{\ast,(i)}\) are recovered from the family \((\Gamma^{(k)})_{k\in I_0}\) through Williams' formula \eqref{Eq09}.
Hence, they can be realized on the new probability space with their joint distributions unchanged.

Fix \(0\le t_{1}<\cdots<t_{n}\).
Since the limit subordinators \(\xi^{(i)}\), \(\tilde{\xi}^{(i)}\) are independent, we may assume, after removing a null set, that no two of the subordinators that appear below have a common jump time.
In the rest of the proof, all arguments are made on this event.

\paragraph{Case 1: \(i\in I_{0,\mathrm{dom}}\).}

Let \(i\in I_{0,\mathrm{dom}}\), that is, \(\beta_i>0\).
First, we note that since \(g\) is the right-continuous inverse of \(h\), we have
\[
g(h(\lambda))\sim \lambda.
\]
Thus, the convergence in Proposition \ref{prop:GammaScaling} can be used in the equivalent form
\[
\left(\frac{\Gamma^{(i)}_{h(\lambda)t}}{\lambda}\right)_{t\ge0}
\xrightarrow[\lambda \to \infty]{d} \xi^{(i)}
\]
for \(i\in I_{0,\mathrm{dom}}\), and similarly for the other classes.

We begin by identifying the scaling limit of \(\Lambda^{(i)}\).
Since \(\xi^{(i)}\) is an \(\alpha\)-stable subordinator, its right-continuous inverse \(\ell^{(i)}\) is continuous and non-decreasing.
Applying Lemma \ref{lem:dense_pointwise_inverse} to
\[
y_\lambda(t):=\frac{\Gamma^{(i)}_{h(\lambda) t}}{\lambda},
\quad
y(t):=\xi^{(i)}(t),
\]
we obtain for every \(t \geq 0\)
\[
\frac{\Lambda^{(i)}_{\lambda t}}{h(\lambda)}
=
y_\lambda^{-1}(t)
\longrightarrow
(\xi^{(i)})^{-1}(t)
=
\ell^{(i)}_t,
\]
and the convergence is uniform on compact intervals almost surely, because \(\ell^{(i)}\) is continuous.

We next identify the limit of \((O^{\ast,(i)})^{-1}(\lambda t)/\lambda\).
By Williams formula \eqref{Eq09}, it suffices to specify the limit of 
\[
\frac{\Gamma^{(k)}\left(\Lambda^{(i)}_{ \lambda t}\right)}{\lambda} \quad (k \in I_{0} \setminus \{i\}).
\]
Let \(k\in I_{0,\mathrm{sub}} \cup I_{0,\mathrm{res}}\).
Applying Lemma \ref{lem:inverse_composition_pointwise} for
\[
x_\lambda(t):=\frac{\Gamma^{(k)}_{h(\lambda) t}}{\lambda},
\quad
y_\lambda(t):=\frac{\Gamma^{(i)}_{h(\lambda) t}}{\lambda},
\quad
x(t):=0,
\quad
y(t):=\xi^{(i)}(t),
\]
we have for every \(t\ge0\)
\begin{align}  
\frac{\Gamma^{(k)}(\Lambda^{(i)}_{\lambda t})}{\lambda}
=
x_\lambda\bigl(y_\lambda^{-1}(t)\bigr)
\longrightarrow
x\bigl(y^{-1}(t)\bigr)
= 0.
\end{align}

Next, let \(k\in I_{0,\mathrm{dom}} \setminus\{i\}\).
Set
\[
x_\lambda(t):=\frac{\Gamma^{(k)}_{h(\lambda) t}}{\lambda},
\quad
y_\lambda(t):=\frac{\Gamma^{(i)}_{h(\lambda) t}}{\lambda},
\quad
x(t):=\xi^{(k)}(t),
\quad
y(t):=\xi^{(i)}(t).
\]
Since \(y=\xi^{(i)}\) is strictly increasing, Lemma \ref{lem:inverse_composition_pointwise} implies that, for every \(t\ge0\) such that \(\xi^{(k)}\) is continuous at \(\ell^{(i)}_t\),
\begin{align}  
\frac{\Gamma^{(k)}(\Lambda^{(i)}_{\lambda t})}{\lambda}
=
x_\lambda\bigl(y_\lambda^{-1}(t)\bigr)
\longrightarrow
x\bigl(y^{-1}(t)\bigr)
=
\xi^{(k)}(\ell^{(i)}_t). \label{Eq12}
\end{align}

Let us denote the set of discontinuity points of \(f\in D([0,\infty),\bR)\) by
\[
\disc(f):=\{t>0\mid f(t)-f(t-)\neq0\}.
\]
We check that
\[
D_{k,i}:=\{t\ge0 \mid \ell^{(i)}_t\in\disc(\xi^{(k)})\}
\]
is countable.
Since
\[
\{t\ge0 \mid \ell^{(i)}_t=s\}=[\xi^{(i)}(s-),\xi^{(i)}(s)]
\quad (s\ge0),
\]
we have
\[
D_{k,i}
=
\bigcup_{s\in\disc(\xi^{(k)})}[\xi^{(i)}(s-),\xi^{(i)}(s)].
\]
Since \(\xi^{(i)}\) and \(\xi^{(k)}\) do not jump at the same time, we have
\[
\Delta \xi^{(i)}(s)=0
\quad\text{for every }s\in\disc(\xi^{(k)}),
\]
and thus,
\[
[\xi^{(i)}(s-),\xi^{(i)}(s)]=\{\xi^{(i)}(s)\}
\]
for all \(s\in\disc(\xi^{(k)})\).
Since \(\disc(\xi^{(k)})\) is countable, \(D_{k,i}\) is countable.
Hence, the convergence \eqref{Eq12} holds except for a countable set of \(t \geq 0\).

Summarizing the above arguments, we obtain
\[
\frac{(O^{\ast,(i)})^{-1}(\lambda t)}{\lambda}
\longrightarrow
t+\sum_{k\in I_{0,\mathrm{dom}},\ k\neq i}\xi^{(k)}(\ell^{(i)}_t)
=
(\Phi^{(i)})^{-1}(t)
\]
for
\[
t\notin \bigcup_{k\in I_{0,\mathrm{dom}},\ k\neq i}D_{k,i}.
\]
Since the right-hand side is strictly increasing, Lemma \ref{lem:dense_pointwise_inverse} yields
\[
\left(\frac{O^{\ast,(i)}(\lambda t)}{\lambda}\right)_{t\ge0}
\to
\bigl(\Phi^{(i)}(t)\bigr)_{t\ge0}
\quad\text{on }C([0,\infty),\bR).
\]

\paragraph{Case 2: \(i\in I_{0,\mathrm{sub}}\).}
Although the argument for this case is largely parallel to Case 1, the final inversion step gives only finite-dimensional convergence, because the process subjected to inversion is not necessarily strictly increasing.
To clarify this point, we give the details.
Let \(i\in I_{0,\mathrm{sub}}\).
Then,
\[
\frac{\Gamma^{(i)}_{h(\lambda) t}}{g_{i}(h(\lambda))}
\longrightarrow \tilde{\xi}^{(i)}
\quad\text{a.s. on }D([0,\infty),\bR).
\]
Applying Lemma \ref{lem:dense_pointwise_inverse} to
\[
y_\lambda(t):=\frac{\Gamma^{(i)}_{h(\lambda) t}}{g_{i}(h(\lambda))},
\quad
y(t):=\tilde{\xi}^{(i)}(t),
\]
we obtain
\[
\frac{\Lambda^{(i)}_{g_{i}(h(\lambda))t}}{h(\lambda)}
=
y_{\lambda}^{-1}(t)
\longrightarrow
(\tilde{\xi}^{(i)})^{-1}(t)
=:
\tilde{\ell}^{(i)}_{t}
\quad (t\ge0),
\]
and the convergence is uniform on compact intervals, because \(\tilde{\ell}^{(i)}\) is continuous.

We next identify the limit of \((O^{\ast,(i)})^{-1}(g_{i}(h(\lambda))t)/\lambda\) through those of
\[
\frac{\Gamma^{(k)}\left(\Lambda^{(i)}_{g_{i}(h(\lambda))t}\right)}{\lambda} \quad (k \in I_{0} \setminus \{i\}).
\]
Let \(k\in (I_{0,\mathrm{sub}} \cup I_{0,\mathrm{res}}) \setminus \{i\}\).
Applying Lemma \ref{lem:inverse_composition_pointwise} for
\[
x_\lambda(t):=\frac{\Gamma^{(k)}_{h(\lambda) t}}{\lambda},
\quad
y_\lambda(t):=\frac{\Gamma^{(i)}_{h(\lambda) t}}{g_{i}(h(\lambda))},
\quad
x(t):=0,
\quad
y(t):=\tilde{\xi}^{(i)}(t),
\]
we have for every \(t\ge0\)
\begin{align}  
\frac{\Gamma^{(k)}\left(\Lambda^{(i)}_{g_{i}(h(\lambda))t}\right)}{\lambda}
=
x_\lambda\bigl(y_\lambda^{-1}(t)\bigr)
\longrightarrow
x\bigl(y^{-1}(t)\bigr)
= 0.
\end{align}

For \(k\in I_{0,\mathrm{dom}} \), setting
\[
x_\lambda(t):=\frac{\Gamma^{(k)}_{h(\lambda) t}}{\lambda},
\quad
y_\lambda(t):=\frac{\Gamma^{(i)}_{h(\lambda) t}}{g_{i}(h(\lambda))},
\quad
x(t):=\xi^{(k)}(t),
\quad
y(t):=\tilde{\xi}^{(i)}(t),
\]
we have from Lemma \ref{lem:inverse_composition_pointwise} that, for every \(t\ge0\) such that \(\xi^{(k)}\) is continuous at \(\tilde{\ell}^{(i)}_t\),
\begin{align}  
\frac{\Gamma^{(k)}(\Lambda^{(i)}_{g_{i}(h(\lambda))t})}{\lambda}
=
x_\lambda\bigl(y_\lambda^{-1}(t)\bigr)
\longrightarrow
x\bigl(y^{-1}(t)\bigr)
=
\xi^{(k)}(\tilde{\ell}^{(i)}_t). 
\end{align}

Summarizing the above arguments and using \(g_i(h(\lambda))/\lambda \to0\), we obtain
\[
\frac{(O^{\ast,(i)})^{-1}(g_{i}(h(\lambda))t)}{\lambda}
\longrightarrow
\sum_{k\in I_{0,\mathrm{dom}}}\xi^{(k)}(\tilde{\ell}^{(i)}_{t}) = (\Phi^{(i)})^{-1}(t)
\]
for
\[
t\notin \bigcup_{k\in I_{0,\mathrm{dom}}}\tilde{D}_{k,i},
\]
where 
\[
\tilde{D}_{k,i} := \{t\ge0 \mid \tilde{\ell}^{(i)}_t\in\disc(\xi^{(k)})\}.
\]
Since \(\bigcup_{k\in I_{0,\mathrm{dom}}}\tilde{D}_{k,i}\) is countable by the same argument as in Case 1, Lemma \ref{lem:dense_pointwise_inverse} gives
\[
\frac{(O^{\ast,(i)})(\lambda t)}{g_{i}(h(\lambda))}
\longrightarrow
\Phi^{(i)}(t)
\]
for every \(t \geq 0\) such that \(S^{-1}_{t}\) is a continuity point of \(\tilde{\xi}^{(i)}\).

It remains to verify that \(S^{-1}_{t_k}\) is a continuity point of
\(\tilde{\xi}^{(i)}\) for \(1 \leq k \leq n\).
Since \(S\) and \(\tilde{\xi}^{(i)}\) have no common jump times, we have 
\[
\{t \geq 0 \mid S^{-1}_{t} \in \disc(\tilde{\xi}^{(i)})\} = S(\disc(\tilde{\xi}^{(i)})),
\]
and both sides are countable.
Since \(S\) and \(\tilde{\xi}^{(i)}\) are independent, and the distribution of \(S(s)\) is atomless for every fixed \(s > 0\), we have
\[
\bP[t_{k} \in S(\disc(\tilde{\xi}^{(i)}))] = 
\bE\left[\bP[t_{k} \in S(\disc(\tilde{\xi}^{(i)})) \mid \tilde{\xi}^{(i)} ] \right] = 0.
\]
Hence, by removing a finite union of null sets, we have \(S^{-1}_{t_k}\notin\disc(\tilde{\xi}^{(i)}) \ (1 \leq k \leq n)\).

Consequently, we obtain
\[
\left(\frac{O^{\ast,(i)}(\lambda t_{k})}{g_{i}(h(\lambda))}\right)_{1 \leq k \leq n}
\to
\bigl(\Phi^{(i)}(t_{k})\bigr)_{1 \leq k \leq n} \quad \text{a.s. on \(\bR^{n}\)}.
\]

\paragraph{Case 3: \(i\in I_{0,\mathrm{res}}\).}

Finally, we consider the case \(i\in I_{0,\mathrm{res}}\).
In this case,
\[
\frac{\Gamma^{(i)}_{h(\lambda) t}}{\lambda}
\longrightarrow 0
\quad\text{a.s. on }D([0,\infty),\bR).
\]
Set
\[
S_\lambda(t):=\frac{T(\eta_{h(\lambda) t})}{\lambda}
=
\sum_{k\in I_0}\frac{\Gamma^{(k)}_{h(\lambda) t}}{\lambda}.
\]
Then
\begin{align}  
S_\lambda\to S
\quad\text{a.s. on }D([0,\infty),\bR). \label{Eq15}
\end{align}
Since \(\sum_{k\in I_0}\beta_k=1\), the process \(S\) is an \(\alpha\)-stable subordinator and in particular is strictly increasing.
Applying Lemma \ref{lem:inverse_composition_pointwise} to
\[
x_\lambda(t):=\frac{\Gamma^{(i)}_{h(\lambda) t}}{\lambda},
\quad
y_\lambda(t):=S_\lambda(t),
\quad
x := 0,
\quad
y := S,
\]
we obtain, for every \(t\ge0\),
\[
\frac{\Gamma^{(i)}\bigl(h(\lambda) S_\lambda^{-1}(t)\bigr)}{\lambda}
\longrightarrow 0.
\]
On the other hand, since
\[
t \leq S_{\lambda}(S_{\lambda}^{-1}(t))
=
\frac{T(\eta_{h(\lambda) S_{\lambda}^{-1}(t)})}{\lambda},
\]
we have
\[
\lambda t
\leq
T(\eta_{h(\lambda) S_{\lambda}^{-1}(t)}).
\]
Because \(O^{\ast,(i)}\) is non-decreasing, it follows from \eqref{Eq11} that
\[
0
\le
\frac{O^{\ast,(i)}(\lambda t)}{\lambda}
\le
\frac{O^{\ast,(i)}(T(\eta_{h(\lambda) S_\lambda^{-1}(t)}))}{\lambda}
=
\frac{\Gamma^{(i)}(h(\lambda) S_\lambda^{-1}(t))}{\lambda}.
\]
Hence,
\[
\frac{O^{\ast,(i)}(\lambda t)}{\lambda}
\longrightarrow 0
\]
for every \(t\ge0\).
Since \(t\mapsto O^{\ast,(i)}(\lambda t)/\lambda\) is non-decreasing, we have
\[
\left(\frac{O^{\ast,(i)}(\lambda t)}{\lambda}\right)_{t\ge0}
\longrightarrow 0
\quad\text{on }C([0,\infty),\bR).
\]

The convergence
\[
\left(\frac{L_{A_{\lambda t}}}{h(\lambda)}\right)_{t \geq 0} \xrightarrow[\lambda \to \infty]{d} S^{-1} \quad \text{on } C([0,\infty),\bR)
\]
follows from \eqref{Eq15} and Lemma \ref{lem:dense_pointwise_inverse}.

We have proven that
\[
\left(
\left(
\frac{O^{\ast,(i)}(\lambda \cdot)}{\lambda}
\right)_{i\in I_{0,\mathrm{dom}}\cup I_{0,\mathrm{res}}},
\frac{L_{A_{\lambda \cdot}}}{h(\lambda)}
\right)
\longrightarrow
\left(
\left(
\Phi^{(i)}(\cdot)
\right)_{i\in I_{0,\mathrm{dom}}\cup I_{0,\mathrm{res}}},
S^{-1}
\right)
\]
on
\[
C([0,\infty),\bR^{I_{0,\mathrm{dom}}\cup I_{0,\mathrm{res}}} \times \bR).
\]

We have also shown that, for every \(i\in I_{0,\mathrm{sub}}\),
\[
\left(
\frac{O^{\ast,(i)}(\lambda t_{k})}{g_i(h(\lambda))}
\right)_{1 \leq k \leq n}
\longrightarrow
\left(
\Phi^{(i)}(t_{k})
\right)_{1 \leq k \leq n}
\]
on \(\bR^n\).
Therefore, the asserted joint convergence holds on
\[
C([0,\infty),\bR^{I_{0,\mathrm{dom}}\cup I_{0,\mathrm{res}}} \times \bR)
\times
(\bR^n)^{I_{0,\mathrm{sub}}}.
\]
This completes the proof.
\end{proof}

\begin{Rem} \label{rem:endpointCase}
In formulating the sub-dominant condition \eqref{assumption:regularVariation2}, we have restricted ourselves to the case \(\tilde{\alpha}_{i}\in[\alpha,1)\).
This condition can be extended to include the endpoint case
\(\tilde{\alpha}_{i}=1\) as follows.
For \(\tilde{\alpha}_{i}=1\), we assume that there exist a function
\(g_{i}(\lambda)\to\infty\) and a constant \(\tilde{\beta}_{i}>0\) such that
\[
\left(
\frac{\Gamma^{(i)}_{\lambda t}}{g_i(\lambda)}
\right)_{t\ge0}
\xrightarrow[\lambda\to\infty]{d}
(\tilde{\beta}_{i}t)_{t\ge0}
\]
on \(D([0,\infty),\bR)\).
The convergence of a scaled subordinator to a deterministic drift is known as
relative stability, and various necessary and sufficient conditions have been established;
see, for example, \cite[Section 8.8]{Regularvariation}.
The most typical case is when \(\Gamma^{(i)}\) has a finite mean, namely
\[
\tilde{\beta}_{i}
=
\int_{D_0}\zeta(e) n^{\ast,(i)}(de)
<\infty.
\]
Then the strong law of large numbers applies, and one can simply take
\(g_i(\lambda)=\lambda\).

The proof of Theorem \ref{thm:occupation_limit} extends to this endpoint case
with no essential modification.
In fact, the argument becomes simpler:
the limit process is the deterministic drift \(\tilde{\xi}^{(i)}(t)=\tilde{\beta}_{i}t\), and both this process and its inverse are continuous and strictly increasing.
Thus, the complications caused by jump times disappear.
Consequently, for this component the convergence can be strengthened from finite-dimensional distributions to convergence on \(C([0,\infty),\bR)\).

The resulting limit process is \(Z^{(i)}(t)=\tilde{\beta}_{i}S^{-1}(t)\).
Since \(S^{-1}\) is the inverse of an \(\alpha\)-stable subordinator, the
one-dimensional distributions of \(Z^{(i)}\) are dilations of Mittag-Leffler distributions.
Thus, this endpoint case corresponds to the classical Darling--Kac type limit random variable.
\end{Rem}

\begin{Rem} \label{rem:skewBessel}
The limit distribution \(\left((\Phi^{(i)})_{i\in I_{0,\mathrm{dom}}},S^{-1}\right)\) can be identified with the joint distribution of the occupation times and the
local time of a skew Bessel process.  We briefly recall this process and explain
the identification.

Let \(L_i\) \((i\in I_{0,\mathrm{dom}})\) be distinct half-lines in the plane
\(\bR^2\) with common endpoint \(0\), and set
\[
E:=\bigcup_{i\in I_{0,\mathrm{dom}}}L_i.
\]
Since \(\sum_{i\in I_{0,\mathrm{dom}}}\beta_i=1\) and \(\beta_i>0\) for
\(i\in I_{0,\mathrm{dom}}\), the vector \(\beta=(\beta_i)_{i\in I_{0,\mathrm{dom}}}\) is a probability vector with strictly positive entries.

Heuristically, the skew Bessel process \(X^{(\alpha,\beta)}\) on \(E\) is the Markov process which moves on each ray as a Bessel process of dimension
\(2-2\alpha\), and, whenever it returns to the common endpoint \(0\), chooses the ray of the next excursion according to the probability vector
\(\beta\).
This description is only heuristic, because \(0\) is regular for itself and the process leaves \(0\) instantaneously, meaning that the excursion measure is infinite.
A rigorous formulation is given by Itô's excursion theory; see, for example, \cite{MR3647066}.

More precisely, \(X^{(\alpha,\beta)}\) is the \(E\)-valued Markov process with no stagnancy at \(0\) whose excursion measure away from \(0\) is
\(n^{(\alpha,\beta)} = \sum_{i\in I_{0,\mathrm{dom}}}\beta_i n_i^{(\alpha)}\), where \(n_i^{(\alpha)}\) is the excursion measure of the Bessel process of dimension \(2-2\alpha\) on the ray \(L_i\).
We use the normalization of \(n_i^{(\alpha)}\) for which the lifetime process associated with \(n_i^{(\alpha)}\) has Laplace exponent \(q^\alpha\).
Then, under \(n^{(\alpha,\beta)}\), the cumulative excursion lifetime on \(L_i\) has Laplace exponent \(\beta_i q^\alpha\), and the total inverse local time has Laplace exponent \(q^\alpha\).
Such a process is also known as a Walsh process; see \cite{MR1022918}.

Let \(O^{(\alpha,\beta),(i)}(t)\) be the occupation time of \(X^{(\alpha,\beta)}\)
on the ray \(L_i\), and let \(L^{(\alpha,\beta)}_t\) be its local time at \(0\).
By Williams formula applied to \(X^{(\alpha,\beta)}\), the joint occupation
time and local time process satisfies
\[
\left(
\bigl(O^{(\alpha,\beta),(i)}(\cdot)\bigr)_{i\in I_{0,\mathrm{dom}}},
L^{(\alpha,\beta)}
\right)
\overset{d}{=}
\left(
\bigl(\Phi^{(i)}(\cdot)\bigr)_{i\in I_{0,\mathrm{dom}}},
S^{-1}
\right)
\]
on
\[
C([0,\infty),\bR^{I_{0,\mathrm{dom}}}\times\bR).
\]
Consequently, the dominant part of Theorem \ref{thm:occupation_limit}, together
with the local-time component, coincides in distribution with the occupation
time and local time process of a skew Bessel process.
\end{Rem}

\begin{Rem} \label{rem:limit_distribution}
The fixed-time distributions in Theorem \ref{thm:occupation_limit} can be
identified more explicitly.

\begin{enumerate}
\item \textbf{Dominant classes.}
For the dominant classes \(I_{0,\mathrm{dom}}\), Remark \ref{rem:skewBessel}
shows that
\[
\left(
\left(\Phi^{(i)}(\cdot)\right)_{i\in I_{0,\mathrm{dom}}},
S^{-1}
\right)
\]
has the same distribution as the joint occupation times and the local time at
\(0\) of the skew Bessel process \(X^{(\alpha,\beta)}\).
It is shown in
\cite[Theorem 1(ii)]{MR1022918} that, for each \(t>0\),
\[
\left(
\bigl(O^{(\alpha,\beta),(i)}(t)\bigr)_{i\in I_{0,\mathrm{dom}}},
L^{(\alpha,\beta)}(t)
\right)
\]
has the same distribution as
\[
\left(
\left(
\frac{t \beta_{i}^{1/\alpha} T_{i}}{\sum_{j\in I_{0,\mathrm{dom}}}\beta_{j}^{1/\alpha}T_{j}}
\right)_{i\in I_{0,\mathrm{dom}}},
\frac{t^\alpha}{\left(\sum_{j\in I_{0,\mathrm{dom}}}\beta_{j}^{1/\alpha}T_j\right)^\alpha}
\right),
\]
where \((T_i)_{i\in I_{0,\mathrm{dom}}}\) are i.i.d.\ positive
\(\alpha\)-stable random variables characterized by \(\bE[e^{-qT_i}] = \exp(-q^\alpha) \ (q\ge0)\).
Thus,
\[
\left(
\frac{\Phi^{(i)}(t)}{t}
\right)_{i\in I_{0,\mathrm{dom}}}  \overset{d}{=} \left(\frac{\beta_{i}^{1/\alpha} T_{i}}{\sum_{j\in I_{0,\mathrm{dom}}}\beta_{j}^{1/\alpha}T_{j}}\right)_{i \in I_{0,\mathrm{dom}}},
\]
and this distribution is a multidimensional generalization of Lamperti's generalized arcsine law (see \cite{Lamperti1958}).
Further explicit formulae, including double Laplace transforms, are given in
\cite[Theorem 4]{MR1022918}.

\item \textbf{Sub-dominant classes.}
For \(i\in I_{0,\mathrm{sub}}\), the one-dimensional marginal law of
\(\Phi^{(i)}(t)\) has a simple Mittag--Leffler type Laplace transform.  Since
\[
\Phi^{(i)}(t)=\tilde{\xi}^{(i)}(S^{-1}_t)
\]
and \(S\) is an \(\alpha\)-stable subordinator with Laplace exponent \(q^\alpha\),
we obtain, for \(q\ge0\),
\[
\bE\left[e^{-q\Phi^{(i)}(t)}\right]
=
\bE\left[
\exp\left(-\tilde{\beta}_{i}q^{\tilde{\alpha}_{i}}S^{-1}_{t}\right)
\right]
=
E_\alpha\left(-\tilde\beta_i q^{\tilde\alpha_i}t^\alpha\right),
\]
where
\[
E_\alpha(z):=\sum_{m=0}^\infty \frac{z^m}{\Gamma(1+\alpha m)}
\]
is the Mittag--Leffler function.
\end{enumerate}
\end{Rem}

\begin{Rem}
We can also derive Dynkin--Lamperti type limits for the waiting times.
That is, we can obtain the scaling limit
\[
\left(\frac{G_{\lambda \cdot}^{\ast}}{\lambda},\frac{D_{\lambda \cdot}^{\ast}}{\lambda}\right)
\]
as \(\lambda \to \infty\) in \(D([0,\infty),\bR^{2})\) for
\[
G_{t}^{\ast} := \sup\{s\le t \mid X_s^\ast=o\},
\quad
D_{t}^{\ast} := \inf\{s > t\mid X_s^\ast=o\}.
\]
This could be incorporated into Theorem \ref{thm:occupation_limit}.
We do not include it in order to keep the formulation readable.
Although this follows from the argument established by \cite{Lamperti1962}, or is derived from a combination of that argument and the Skorokhod coupling as given in \cite{Sera2020}, we give the details to see that the convergence can be obtained on the same Skorokhod coupling, and hence can be adjoined to the joint convergence.

Define
\[
G(f)_t:=\sup\{f(s) \mid f(s)\le t\},
\quad
D(f)_t:=\inf\{f(s) \mid f(s)>t\}
\]
for 
\[
f \in D_{\uparrow} := \{ f \in D([0,\infty),\bR) \mid f\text{ is non-negative, non-decreasing, and } f(\infty) = \infty \}.
\]
We now work on the coupling space used in the proof of Theorem \ref{thm:occupation_limit}.
Then we have
\[
S_{\lambda}
\xrightarrow[\lambda \to \infty]{} S \quad \text{a.s. on \(D([0,\infty),\bR)\)}.
\]
By the definition of \(G\) and \(D\), we have the identities
\[
\frac{G_{\lambda t}^{\ast}}{\lambda} = G\left(S_{\lambda}\right)_{t},
\quad
\frac{D_{\lambda t}^{\ast}}{\lambda} = D\left(S_{\lambda}\right)_{t}.
\]

We first establish that 
\begin{align}
\frac{G^{\ast}_{\lambda \cdot}}{\lambda} \xrightarrow[\lambda \to \infty]{d}
 G(S) \quad \text{and} \quad \frac{D^{\ast}_{\lambda \cdot}}{\lambda} \xrightarrow[\lambda \to \infty]{d}
 D(S) \quad \text{on \(D([0,\infty),\bR)\)}. \label{Eq16}
\end{align}

By \cite[Theorem]{Stone1963}, the convergence \eqref{Eq16} holds when the following two conditions are satisfied:
\begin{enumerate}
\item For every \(\eps,N > 0\)
\[
\lim_{c \to 0}\limsup_{\lambda \to \infty}\bP [\Delta_{N}(c,G(S_{\lambda})) > \eps] = \lim_{c \to 0}\limsup_{\lambda \to \infty}\bP [\Delta_{N}(c,D(S_{\lambda})) > \eps] = 0,
\]
where
\[
\Delta_{N}(c,f) := \sup_{\substack{t-c < t_{1} < t_{2} < t+c \\ 0 \leq t_{1} < t_{2} \leq N}} (|f(t_{1}) - f(t)| \wedge |f(t) - f(t_{2})|) \quad (c > 0, \ f \in D([0,\infty),\bR)).
\]
\item Every finite-dimensional distribution of \(G(S_{\lambda})\) and \(D(S_{\lambda})\) weakly converges to that of \(G(S)\) and \(D(S)\), respectively.
\end{enumerate}
From the argument in \cite[p.689]{Lamperti1962} (see also \cite[Lemma 5.9]{Sera2020}), it follows for every \(0 < c < N\)
\[
\Delta_{N}(c,G(S_{\lambda})) \leq 2c,\quad  \Delta_{N}(c,D(S_{\lambda})) \leq 2c.
\]
Thus, the first condition holds, and it remains to show the finite-dimensional convergence.

We can confirm that for \(f_{n},f \in D_{\uparrow}\), the convergence \(f_{n} \to f \ (n \to \infty)\) on \(D([0,\infty),\bR)\) implies
\[
G(f_{n})_{t} \longrightarrow G(f)_{t} \quad \text{and}\quad  D(f_{n})_{t} \longrightarrow D(f)_{t} 
\]
for every continuity point \(t\) of \(G(f)\) and \(D(f)\), respectively (cf. \cite[Lemma 5.2]{Sera2020}).
This implies almost surely
\[
G(S_{\lambda})_{t} \xrightarrow[]{\lambda \to \infty} G(S)_{t} \quad \text{and} \quad D(S_{\lambda})_{t} \xrightarrow[]{\lambda \to \infty} D(S)_{t} \]
for every continuity point \(t\) of \(G(S)\) and \(D(S)\), respectively.

By definition, we have for \(f \in D_{\uparrow}\)
\[
\disc(G(f)) = \{f(t) \mid \Delta f(t) > 0 \ (t > 0)\},\quad
\disc(D(f)) = \{f(t-) \mid \Delta f(t) > 0 \ (t > 0)\}.
\]
This description of the discontinuity sets implies that a deterministic time \(a>0\)
can be a discontinuity point of \(G(S)\) or \(D(S)\) only if it is an endpoint of a jump interval of the range of \(S\).  We claim that this has probability zero.
Indeed, by the compensation formula and the non-atomicity of the L\'evy measure \(\nu\) of \(S\),
\[
\bP[a \in \disc(G(S))] \leq \bE\left[
\sum_{s>0}1_{\{S(s-)+\Delta S(s)=a\}}
\right]
=
\bE\left[
\int_0^\infty\int_{(0,\infty)}
1_{\{S(s-)+x=a\}}\,\nu(dx)\,ds
\right]
=0.
\]
Thus, \(a\) is almost surely not a right endpoint of a jump interval.
Applying a similar argument to \(D(S)\), we see that \(a\) is almost surely not a left endpoint of a jump interval.
Hence, any finite sequence of fixed times is almost surely contained in the set of continuity points of both \(G(S)\) and \(D(S)\).
Therefore, we have
\begin{align}
(G(S_{\lambda})_{t_{i}},D(S_{\lambda})_{t_{i}})_{1 \leq i \leq n} \xrightarrow[]{\lambda \to \infty} (G(S)_{t_{i}},D(S)_{t_{i}})_{1 \leq i \leq n} \quad \text{a.s.}, \label{Eq17}
\end{align}
and the desired finite-dimensional convergence holds.
Thus, \eqref{Eq16} is shown.

Since \((G(S_{\lambda}),D(S_{\lambda}))_{\lambda > 0}\) is tight on \(D([0,\infty),\bR) \times D([0,\infty),\bR)\), and the finite-dimensional convergence uniquely determines the limit, we obtain
\[
\left(\frac{G^{\ast}_{\lambda \cdot}}{\lambda},\frac{D^{\ast}_{\lambda \cdot}}{\lambda} \right) \xrightarrow[\lambda \to \infty]{d}
\quad (G(S),D(S)) \quad \text{on \(D([0,\infty),\bR) \times D([0,\infty),\bR)\)}.
\]

Since \(\disc(G(S)) \cap \disc(D(S))\) is the set of values that are isolated points of the range of \(S\), and \(S\) is strictly increasing, it is almost surely empty.
Therefore, from Corollary \ref{cor:product_to_multivariate_J1} we can upgrade the convergence to that on \(D([0,\infty),\bR^{2})\), and we obtain the desired assertion.

Furthermore, since the above argument is carried out on the same Skorokhod coupling as in the proof of Theorem \ref{thm:occupation_limit}, combining with a usual tightness argument, 
we can enlarge the convergence in Theorem \ref{thm:occupation_limit} by adjoining the two coordinates \(G^\ast_{\lambda\cdot}/\lambda\) and \(D^\ast_{\lambda\cdot}/\lambda\).
\end{Rem}

\section{Examples} \label{section:example}

In this section, we illustrate how the preceding limit theorem applies to
concrete processes.
We focus on examples in the state-space partition setting,
where the preservation of the excursion classes under the time change is
immediate.

We first consider continuous-time Markov chains on discrete rays.
Although this example is elementary, it clearly shows how the regular variation assumption on the excursion lifetime is reduced to a tail asymptotic for a hitting time.
We then discuss one-dimensional diffusions, where the same mechanism appears in an uncountable state space and the stagnancy at the distinguished point can also be seen naturally.

\subsection{Continuous-time Markov chains on discrete rays}

Let \(n\in \bN\), and define \(S \subset \bZ^{2}\) by
\[
S=\{o\}\sqcup \bigsqcup_{1\le i\le n}S_i,
\quad
o := (0,0), \quad S_i:=\{(i,k) \in \bZ^{2} \mid k \geq 1\}.
\]
We call each \(S_i\) a ray and regard \(o\) as the common endpoint of the rays.
Let \(X=(X_t,\bP_x)\) be a minimal continuous-time Markov chain on \(S\) with \(Q\)-matrix \(Q=(q(x,y))_{x,y\in S}\).
We assume that
\[
q(x):=-q(x,x)<\infty \quad (x\in S),
\]
so that every state has a holding time, and that the process can move from one ray to another only through \(o\).
More precisely, for \(x\in S_{i}\) and
\(y\in S_{j}\) with \(i \neq j\), we assume
\[
q(x,y)=0.
\]
We define the entrance distribution \(\mu_{i}\) of \(X\) into \(S_{i}\) by
\[
\mu_{i}((i,k)) := \frac{q(i,k)}{r_{i}},
\quad (1\le i\le n,\ k \geq 1) \quad \text{for} \quad r_{i} := \sum_{l \in S_{i}} q(i,l).
\]

Let
\[
\tau_{o} := \inf\{t>0 \mid X_{t} = o\}
\]
be the first hitting time of \(o\).
Define an excursion measure \(n\) of \(X\) by
\[
n(A) := q(o)\bP[\theta_{\sigma}X_{\cdot \wedge \tau_{o}} \in A] \quad (A \in \cB(D_{0})),
\]
where \(\cB(D_{0})\) denotes the Borel sets of \(D_{0}\) and \(\sigma := \inf\{t > 0 \mid X_{t} \neq o\}\).
From the first step analysis, we can write as
\[
n(A) = \sum_{1 \leq i \leq n}r_{i}\bP_{\mu_{i}}[X_{\cdot \wedge \tau_{o}} \in A].
\]

For \(i=1,\dots,n\), define
\[
D_0^{(i)} := \left\{ e\in D_0 \mid e_{t}\in S_{i} \text{ for all }0<t<\zeta(e) \right\}.
\]
These sets are measurable, and since the chain cannot move between distinct
rays without hitting \(o\), we have
\[
n\left(D_0\setminus\bigcup_{i=1}^nD_0^{(i)} \right)=0,
\quad
n(D_0^{(i)}\cap D_0^{(j)})=0 \quad (i\ne j).
\]
Thus, \((D_0^{(i)})_{1\le i\le n}\) corresponds to a state-space partition setting.

Let \(n^{(i)}:=n(\cdot \cap D_0^{(i)})\).
Then, for \(t\ge0\),
\[
n^{(i)}(\zeta>t)
=
r_{i}\bP_{\mu_{i}}(\tau_{o} > t).
\]
Hence, the tail behavior of the lifetime of \(n^{(i)}\) is reduced to the hitting time \(\tau_o\) for the chain on each ray.

We now incorporate the effect of the time change.
Suppose that the subordinator assigned to the ray \(S_i\) is a stable subordinator with Laplace exponent
\[
\psi^{(i)}(q)=b_{i} q^{\gamma_{i}},
\quad (b_{i}>0, \ 0 < \gamma_{i} \leq 1).
\]
The case \(\gamma_i=1\) includes a deterministic drift.

Assume that the original lifetime tail on the ray \(S_i\) is regularly varying:
\begin{align}
n^{(i)}(\zeta>t)
=
r_{i}\bP_{\mu_i}(\tau_{o} > t)
\sim
a_{i} t^{-\rho_i}K_{i}(t)
\quad (t\to\infty), \label{Eq18}
\end{align}
where \(a_{i}>0\), \(0 \leq \rho_{i} < 1\), and \(K_{i}\) is slowly varying.
Then the Laplace exponent \(\Psi^{(i)}\) of the cumulative lifetime process satisfies
\[
\Psi^{(i)}(q) \sim \Gamma(1-\rho_i)a_i q^{\rho_i}K_i(1/q)
\quad (q \to 0).
\]
By Proposition \ref{prop:LEofTimeChange},
\[
\Psi^{\ast,(i)}(q)=\Psi^{(i)}(\psi^{(i)}(q)).
\]
Hence,
\[
\Psi^{\ast,(i)}(q) \sim \Gamma(1-\rho_i)a_i b_i^{\rho_i}
q^{\rho_i\gamma_i}K_i(q^{-\gamma_i})
\quad (q \to 0).
\]
Applying Karamata's Tauberian theorem, we obtain
\[
n^{\ast,(i)}(\zeta>t)
\sim
a_i^{\ast} t^{-\rho_i\gamma_i}K_i(t^{\gamma_i}),
\]
where
\[
a_i^{\ast}
:=
\frac{\Gamma(1-\rho_i)}{\Gamma(1-\rho_i\gamma_i)}
a_i b_i^{\rho_i}.
\]
Thus, the time change transforms the regularly varying index from \(\rho_i\) to \(\alpha_i:=\rho_i\gamma_i\).
Set
\[
\alpha:=\min_{1\le i\le n}\alpha_i,
\quad
J:=\{i \mid \alpha_i=\alpha\}.
\]
The rays in \(J\) are the candidates for the dominant class after the time
change.
Define
\[
K(t):=\sum_{j\in J}a_j^{\ast} K_j(t^{\gamma_j}).
\]
Then \(K\) is slowly varying.
We assume, for each \(i\in J\), the limit
\[
\beta_{i}
:=
\lim_{t\to\infty}
\frac{a_i^{\ast} K_i(t^{\gamma_i})}{K(t)}
\]
exists.
Then \(\beta_i\ge0\) and \(\sum_{i\in J}\beta_i=1\).  Let
\[
I_{0,\mathrm{dom}}:=\{i\in J \mid \beta_i>0\}.
\]
For \(i\in I_{0,\mathrm{dom}}\), we have
\[
n^{\ast,(i)}(\zeta>t)
\sim
\beta_i t^{-\alpha}K(t).
\]
Hence, the dominant regular variation condition of Theorem
\ref{thm:occupation_limit} is satisfied.
The remaining rays are classified as subdominant or residual, depending on whether their transformed lifetime tails satisfy a regularly varying condition.

\begin{Rem}
As in Remark \ref{rem:endpointCase}, the preceding calculation can be extended to the endpoint \(\rho_i=1\) by using relative stability of the cumulative lifetime process.
Suppose that there exist a function \(f_i(\lambda)\to\infty\) and a constant \(m_i>0\) such that
\[
\frac{\Gamma^{(i)}_{\lambda t}}{f_i(\lambda)}
\xrightarrow[\lambda\to\infty]{d}
m_i t
\]
in \(D([0,\infty),\bR)\). 
Let \(k_i\) be the right-continuous inverse of \(f_i\), and write \(\ell_i(t):= k_i(t)/t\).
Then \(\ell_{i}\) is slowly varying, and the above convergence is equivalent to
\[
\Psi^{(i)}(q) \sim \frac{m_iq}{\ell_i(1/q)} \quad (q \to 0).
\]

This typically holds, for instance, when
\[
m_i=\int_{D_0}\zeta(e)\,n^{(i)}(de)<\infty,
\]
in which case one can take \(f_i(\lambda)=\lambda\) and hence
\(\ell_{i}\equiv1\).

Assume that the subordinator assigned to the ray \(S_i\) satisfies
\[
\psi^{(i)}(q)\sim b_iq^{\gamma_i}L_i(1/q) \quad (b_i>0,\ 0<\gamma_i<1),
\]
where \(L_i\) is slowly varying.
Then
\[
\Psi^{\ast,(i)}(q) = \Psi^{(i)}(\psi^{(i)}(q)) \sim m_{i}b_{i}q^{\gamma_{i}} K_i(1/q),
\]
where
\[
K_i(t) := \frac{L_i(t)}{\ell_i\left(t^{\gamma_i}/L_i(t)\right)}.
\]
Consequently,
\[
n^{\ast,(i)}(\zeta>t) \sim \frac{m_ib_i}{\Gamma(1-\gamma_i)} t^{-\gamma_i} K_i(t).
\]
Thus, the endpoint \(\rho_i=1\) can be treated in the same way.
In particular, even when the original excursion lifetime is light-tailed, for example exponentially decaying, the time-changed excursion lifetime may be regularly varying.
\end{Rem}

\subsection{One-dimensional diffusions}

Here, we examine how our occupation limit theorem applies to one-dimensional diffusions.
We note that several previous studies have already addressed the occupation time limit theorem for diffusions (see e.g., \cite{Watanabe1995}, \cite{MR3647066}).

We briefly recall the general theory of one-dimensional diffusions (see e.g., \cite{MR345224}, \citet[Chapter 33]{Kallenberg-third} for details).
Let \(I \subset \bR\) be an interval with non-empty interior \(I^{\circ}\).
We consider a one-dimensional diffusion \(X = (X_{t},\bP_{x})\) on \(I\) that is non-singular and recurrent;
\[
\bP_{x}[\tau_{y} < \infty] = 1 \quad (x,y \in I).
\]
This assumption implies that when an endpoint of \(I\) is in \(I\), it is not a trap.
As is well known, the behavior of \(X\) on \(I^{\circ}\) is characterized by the scale function and speed measure.
That is, there exist a Radon measure \(m\) on \(I\) with full support and a strictly increasing continuous function \(s:I \to \bR\), and the local generator of \(X\) is represented as \(\frac{d}{dm}\frac{d}{ds}\).
By considering the process \(s(X)\) on \(s(I)\) instead of \(X\), we may always assume without loss of generality that \(X\) is in the natural scale; \(s(x) = x\).
We may also assume \(0 \in I^{\circ}\).

The process \(X\) admits an occupation density \(L^{x} = (L^{x}_{t})_{t \geq 0} \ (x \in I)\) with respect to the speed measure \(m\):
\[
\int_{0}^{t}f(X_{s})ds = \int_{I}f(y)L^{y}_{t}m(dy) \quad \bP_{x}\text{-a.s.}
\]
for every \(x\in I\), \(t\ge0\), and every non-negative measurable function \(f\).
Since \(X\) is a strong Markov process, it is regenerative at \(0\).
It is a standard fact that the local time \(L\) at \(0\) in our excursion framework coincides with \(L^{0}\) up to a multiplicative constant.
We normalize \(L\) so that \(L \equiv L^{0}\).
We denote the excursion measure of \(X\) away from \(0\) by \(n\).

Since \(X\) has continuous paths, every excursion away from \(0\) is either
positive or negative.  Define
\[
D_0^{(+)} = \{e\in D_0 \mid e_t>0\text{ for all }0<t<\zeta(e)\},
\]
\[
D_0^{(-)} = \{e\in D_0 \mid e_t<0\text{ for all }0<t<\zeta(e)\}.
\]
Then
\[
n\bigl(D_0\setminus(D_0^{(+)}\cup D_0^{(-)})\bigr)=0,
\quad
n(D_0^{(+)}\cap D_0^{(-)})=0.
\]
Thus, \((D_0^{(+)},D_0^{(-)})\) is a measurable partition of the excursion
space.
It is preserved by the time changes considered in this paper.
Set
\[
n^{(+)}:=n(\cdot\cap D_0^{(+)}),\quad
n^{(-)}:=n(\cdot\cap D_0^{(-)}).
\]
They can be identified, up to normalization of local time, with the excursion measures associated with the half-line diffusions obtained from the restrictions of the speed measure to the positive and negative half-lines.

With the above normalization, the inverse local time \(\eta\) at \(0\) has the representation
\[
\eta_s = m(\{0\})s + \int_{[0,s]\times D_0}\zeta(e) N(du\,de).
\]
Equivalently, its Laplace exponent is
\[
-\log \bE_0[e^{-q\eta_1}]
=
m(\{0\})q
+
\int_0^\infty (1-e^{-qt})
\bigl(n^{(+)}+n^{(-)}\bigr)(\zeta\in dt).
\]
Thus, in this normalization, the stagnancy rate at \(0\) is \(m(\{0\})\).

The regular variation of \(n^{(+)}(\zeta>t)\) and \(n^{(-)}(\zeta>t)\) can be
verified from the asymptotic behavior of the speed measure on the corresponding half-line.
More precisely, in \cite{Kasahara1975}, it is established that the regular variation of the L\'evy measure tail \(n^{(\pm)}(\zeta > t)\) is equivalent to that of the speed measure through the continuity of Krein's correspondence.
We can use this result as a criterion ensuring the regular variation assumptions in Theorem \ref{thm:occupation_limit}.

We now illustrate how to apply this criterion in a simple example.
Fix \(\alpha_{\pm}\in(0,1)\) and \(c\ge0\).  Let \(m\) be the Radon measure on
\(\bR\) given by
\[
\begin{aligned}
m(dx)
&=
2(2\alpha_-)^{1/\alpha_- -2}|x|^{1/\alpha_- -2}
1_{\{x<0\}}\,dx
+
c\delta_0(dx)  \\
&\quad+
2(2\alpha_+)^{1/\alpha_+ -2}x^{1/\alpha_+ -2}
1_{\{x>0\}}\,dx .
\end{aligned}
\]
Let \(X\) be the natural-scale diffusion on \(\bR\) with speed measure
\(m\).
The process \(X\) behaves as a natural-scale Bessel process of dimension parameter \(2-2\alpha_{+}\) (resp. \(2-2\alpha_{-}\)) on \((0,\infty)\) (resp. \((-\infty,0)\)) and has stagnancy at \(0\) determined by the atom \(c\delta_0\).
For this diffusion,
\[
n^{(\pm)}(\zeta>t)
\sim
a_\pm t^{-\alpha_\pm}
\quad (t\to\infty)
\]
for some constants \(a_\pm>0\).

We assign stable subordinators to the positive excursions, negative excursions, and the stagnancy at \(0\) whose Laplace exponents are
\[
\psi^{(\bullet)}(q)=b_\bullet q^{\gamma_\bullet}
\quad
(b_\bullet>0,\quad 0<\gamma_\bullet\leq1,\bullet\in\{+,-,0\}).
\]
Then Proposition \ref{prop:LEofTimeChange} gives
\[
n^{\ast,(\pm)}(\zeta>t)
\sim
a_\pm^\ast t^{-\alpha_\pm\gamma_\pm}
\quad (t\to\infty),
\]
where
\[
a_\pm^\ast
=
\frac{\Gamma(1-\alpha_\pm)}
{\Gamma(1-\alpha_\pm\gamma_\pm)}
a_\pm b_\pm^{\alpha_\pm}.
\]
If \(c=0\), then \(O^{\ast,(0)}\equiv0\), and \(0\) is residual.
If \(c>0\), then the stagnancy component is transformed according to the
endpoint case \(\rho=1\), and the regular variation index of \(\Gamma^{(0)}\) is \(\gamma_0\).
Thus, the relevant transformed indices are
\[
\alpha_+^\ast=\alpha_+\gamma_+,\quad
\alpha_-^\ast=\alpha_-\gamma_-,\quad
\alpha_0^\ast=\gamma_0.
\]
Consequently, the dominant class is the set of components attaining the minimum of these indices.
In particular, by choosing the stable indices \(\gamma_+,\gamma_-,\gamma_0\), the dominant class may consist of the positive excursions, the negative excursions, the stagnancy at \(0\), or combinations of them.

\appendix

\section{Proofs of auxiliary results} \label{appendix:proofs}

\begin{proof}[Proof of Lemma \ref{lem:dense_pointwise_inverse}]
Fix a continuity point \(t\ge0\) of \(f^{-1}\), and set
\[
s:=f^{-1}(t).
\]
We first consider the case \(s>0\).
Let \(0<\eps<s\).
Since \(s=f^{-1}(t)\), we have \(f(s+\eps)>t\).
We claim that \(f(s-\eps)<t\).
Indeed, if \(f(s-\eps)=t\), then by monotonicity,
\( f(u)=t \ (u\in[s-\eps,s))\).
Hence, for every \(\delta>0\),
\(f^{-1}(t-\delta)\le s-\eps\), which implies
\( f^{-1}(t-)\le s-\eps<s=f^{-1}(t)\),
contradicting the continuity of \(f^{-1}\) at \(t\).
Thus, we have \(f(s-\eps) < t < f(s+\eps)\).
Since \(D\) is dense, we may take
\[
a\in D\cap(s-\eps,s),
\quad
b\in D\cap(s,s+\eps).
\]
Then \(f(a)<t<f(b)\).
Since \(f_{n}(a) \to f(a)\) and \(f_{n}(b) \to f(b)\), we have for large \(n\),
\[
f_n(a)<t<f_n(b).
\]
This implies \(a \leq f_n^{-1}(t) \leq b\), and thus,
\(s-\eps \leq f_n^{-1}(t) \leq s+\eps\).
Since \(\eps>0\) is arbitrary, we conclude that
\[
f_n^{-1}(t)\to s=f^{-1}(t).
\]
The case \(s=0\) is similar using only the upper bound, and we omit the proof.

Finally, if \(f\) is strictly increasing, then \(f^{-1}\) is continuous.
Thus, \(f^{-1}_{n}(t) \to f^{-1}(t)\) for every \(t \geq 0\).
Since the pointwise convergence of non-decreasing functions to a continuous function implies the local uniform convergence, we have 
\[
\sup_{u\in[0,T]}|f_n^{-1}(u)-f^{-1}(u)|\to0
\quad (n\to\infty)
\]
for every \(T>0\).
This completes the proof.
\end{proof}

\begin{proof}[Proof of Lemma \ref{lem:inverse_composition_pointwise}]
  From Lemma \ref{lem:dense_pointwise_inverse}, the functions \(y_{n}^{-1}\) converge to \(y^{-1}\) pointwise.
  Fix \(t \geq 0\) such that \(x\) is continuous at \(y^{-1}(t)\), and set \( s := y^{-1}(t)\).
  Let \(T > s\).
  Since \(x_n \to x\) in \(D([0,\infty),\bR)\), there exist \(\lambda_n \in \Lambda\) such that
  \[
  \sup_{u\in[0,T]}
  \bigl(
    |x_n(\lambda_n(u))-x(u)|
    \vee
    |\lambda_n(u)-u|
  \bigr)
  \to 0
  \quad (n\to\infty).
  \]
  Setting
  \[
  s_{n} := y_n^{-1}(t),
  \quad
  r_{n} := \lambda_n^{-1}(s_n),
  \]
  we readily have \(s_n\to s\) and \(r_n\to s\).
  Then 
  \begin{align}
  |x_n(s_{n})-x(s)|
  &\leq
  |x_n(\lambda_n(r_{n}))-x(r_{n})|
  +
  |x(r_{n})-x(s)|.
  \end{align}
  Since the first term is bounded by \(\sup_{u\in[0,T]}|x_n(\lambda_n(u))-x(u)|\) and the second term also converges to \(0\),
  we obtain \(x_n(y_n^{-1}(t))\to x(y^{-1}(t))\ (n\to\infty)\).
  The proof is complete.
\end{proof}

\begin{proof}[Proof of Lemma \ref{lem:coordinatewise_to_multivariate_J1}]
We first consider the case \(d=2\), and write
\[
\alpha_n:=x_n^{(1)},\quad \alpha:=x^{(1)},\quad
\beta_n:=x_n^{(2)},\quad \beta:=x^{(2)}.
\]
By assumption, \(\alpha_n\to\alpha\) in \(D([0,\infty),\bR)\) and \(\beta_n\to\beta\) in \(D([0,\infty),\bR)\).

We claim that, for every \(t\ge0\), there exists a sequence \(t_n\to t\) such that
\[
\Delta\alpha_n(t_n)\to\Delta\alpha(t),
\quad
\Delta\beta_n(t_n)\to\Delta\beta(t).
\]
Indeed, suppose first that \(\Delta\alpha(t)\neq0\).
Then, by \cite[Proposition 2.1(a)]{JacodShiryaev}, there exists \(t_n\to t\) such that
\[
\Delta\alpha_n(t_n)\to\Delta\alpha(t).
\]
Since \(\alpha\) and \(\beta\) have no common jump times, we have \(\Delta\beta(t)=0\), that is, \(\beta\) is continuous at \(t\).

We show that \(\Delta\beta_n(t_n)\to0\).
Since \(\beta_n\to\beta\) in \(D([0,\infty),\bR)\) with the \(J_1\)-topology, there exist strictly increasing continuous function \(\lambda_n\) on \([0,\infty)\) such that \(\lambda_{n}(0) = 0\), \(\lambda_{n}(\infty) = \infty\), and for each \(T > 0\)
\[
\sup_{s\in[0,T]}|\lambda_n(s)-s|\to0,
\quad
\sup_{s\in[0,T]}|\beta_n(\lambda_n(s))-\beta(s)|\to0.
\]
Set \(s_n:=\lambda_n^{-1}(t_n)\). Then \(s_n\to t\).
Since \(\beta_n(t_n)=\beta_n(\lambda_n(s_n))\),
it follows \(\beta_n(t_n)\to\beta(t)\).
Moreover, taking any \(T > t\), we have
\[
|\beta_{n}(t_{n}-)-\beta(s_{n}-)| = |\beta_{n}(\lambda_{n}(s_{n})-)-\beta(s_{n}-)| 
\le
\sup_{u\in[0,T]}|\beta_n(\lambda_{n}(u))-\beta(u)|
\to0,
\]
where we note that \(\lim_{\eps \downarrow 0 }\beta_{n}(\lambda_{n}(s_{n}-\eps)) = \beta_{n}(\lambda_{n}(s_{n})-)\).
Since \(\beta\) is continuous at \(t\), we have \(\beta_{n}(t_{n}-) \to \beta(t)\), and we obtain \(\Delta\beta_n(t_n) \to 0\).

The case \(\Delta\beta(t)\neq0\) is symmetric.
Finally, if \(\Delta\alpha(t)=\Delta\beta(t)=0\), then taking \(t_n=t\) and applying the same argument to both coordinates, we obtain
\[
\Delta\alpha_n(t)\to0,
\quad
\Delta\beta_n(t)\to0.
\]
Thus, the claim is proven.

Then, by \cite[Proposition 2.2(b)]{JacodShiryaev}, we have the joint convergence
\[
(\alpha_n,\beta_n)\to(\alpha,\beta)
\quad
\text{in } D([0,\infty),\bR^2).
\]

The general case follows by induction on \(d\).
\end{proof}


\bibliography{fractionalArcsine-arXiv01.bbl}

\end{document}

%% file: style-common.tex
\makeatletter
\@addtoreset{footnote}{page}
\makeatother

\usepackage{enumitem}

\usepackage{amsthm}
\usepackage{amsmath,amssymb,latexsym,amsfonts,mathrsfs}

\newcommand{\Supp}{\mathop{\mathrm{Supp}}}

\newcommand{\eps}{\ensuremath{\varepsilon}}

\renewcommand{\tilde}{\widetilde}

\newcommand{\bE}{\ensuremath{\mathbb{E}}}

\newcommand{\bN}{\ensuremath{\mathbb{N}}}

\newcommand{\bP}{\ensuremath{\mathbb{P}}}

\newcommand{\bR}{\ensuremath{\mathbb{R}}}

\newcommand{\bZ}{\ensuremath{\mathbb{Z}}}

\newcommand{\cA}{\ensuremath{\mathcal{A}}}
\newcommand{\cB}{\ensuremath{\mathcal{B}}}

\newcommand{\cF}{\ensuremath{\mathcal{F}}}

\newcommand{\cL}{\ensuremath{\mathcal{L}}}
\newcommand{\cM}{\ensuremath{\mathcal{M}}}

%% file: style-e.tex
\theoremstyle{plain}
\newtheorem{Thm}{Theorem}[section]

\newtheorem{Lem}[Thm]{Lemma}
\newtheorem{Prop}[Thm]{Proposition}
\newtheorem{Cor}[Thm]{Corollary}

\theoremstyle{definition}

\newtheorem{Rem}[Thm]{Remark}
\newtheorem{Ex}[Thm]{Example}

%% file: style-layout-paper.tex
\numberwithin{equation}{section}

\makeatletter
\renewcommand\section{\@startsection {section}{1}{\z@}%
                                   {-3.5ex \@plus -1ex \@minus -.2ex}%
                                   {2.3ex \@plus.2ex}%
                                   {\normalfont\large\textbf}}
\makeatother

\makeatletter
\renewcommand\subsection{\@startsection {subsection}{1}{\z@}%
                                   {-3.5ex \@plus -1ex \@minus -.2ex}%
                                   {2.3ex \@plus.2ex}%
                                   {\normalfont\normalsize\textbf}}
\makeatother